\newtheorem{theorem}{Theorem}[section]
\newtheorem{corollary}[theorem]{Corollary}
\newtheorem{definition}[theorem]{Definition}
\newtheorem{proposition}[theorem]{Proposition}
\title{Isoperimetric 3- and 4-bubble results on $\mathbb{R}$ with density $|x|$}
\author{Evan Alexander, Emily Burns, John Ross, Jesse Stovall, Zariah Whyte}
\begin{document}

\maketitle

\section{Introduction}


The classic isoperimetric problem asks one to find, out of all simple closed curves in the plane with fixed perimeter $P$, the curve that encloses the maximal amount of area $A$. Equivalently, the iso-area problem asks one to find the simple-closed curve with minimal perimeter out of all simple closed curves with a fixed enclosed area. These two problems, who share a solution of a circle of appropriate size, have been known since antiquity. Classically, a solution to an isoperimeteric problem is called an isoperimetric region or a  ``bubble'' due to how soap films naturally form isoperimetric surfaces in $\mathbb{R}^3$.

Isoperimetric problems can be generalized in a number of ways, including the introduction of multiple regions to optimize, looking at closed hypersurfaces (as analogues of simple closed curves) within $\mathbb{R}^n$ or other manifolds; and introducing density functions that affect how perimeter and area are measured. Although studying isoperimetry with density functions is a more recent development, much has already been said. By an argument in \cite{MorganPratelli13}, it is known that a perimeter-minimizing $n$-bubble solution will exist on $\mathbb{R}^m$ when using a density function that radially increases to infinity. On $\mathbb{R}^1$, early isoperimetric results for a single bubble were found in \cite{BayleCaneteMorganRosales06}. Additionally, both \cite{ChambersBongiovanniETAL18} and \cite{HuangMorgan19} have found single- and double-bubble solutions on $\mathbb{R}^1$ with density. In \cite{HuangMorgan19}, the single- and double-bubble solutions were found for density $|x|^p$, extending the single-bubble result from \cite{BoyerBrownChambersLovingTammen16} in the 1-dimensional case. In \cite{ChambersBongiovanniETAL18},
single- and double-bubble results were found for density functions that were log-convex. In recent work, \cite{Sothanaphan20} extended these results to identify possible triple-bubbles.

Our work will examine the multi-bubble isoperimetric problem on $\mathbb{R}^1$ with a prescribed density function of $|x|$. Our main results, Theorem \ref{Thm:TripleBubble} and Theorem \ref{Thm:FourBubble}, will provide the least-perimeter way to enclose and separate $n$ regions of specified masses for $n=3, 4$. We will also record the results for $n=1,2$ first identified in \cite{HuangMorgan19} (proven for completeness in this paper). The results for the first four isoperimetric problems are displayed in Figure \ref{fig:FirstFourBubbles}. Although we will wait to formally state the result, we note that the masses alternate positive/negative across the origin as they increase in weighted mass. This leads to a natural conjecture that the pattern will continue for larger $n$-bubbles; this conjecture is addressed and positively proven in forthcoming work \cite{Ross21}.

\begin{figure}[H]
    \centering
    \includegraphics[scale=0.75]{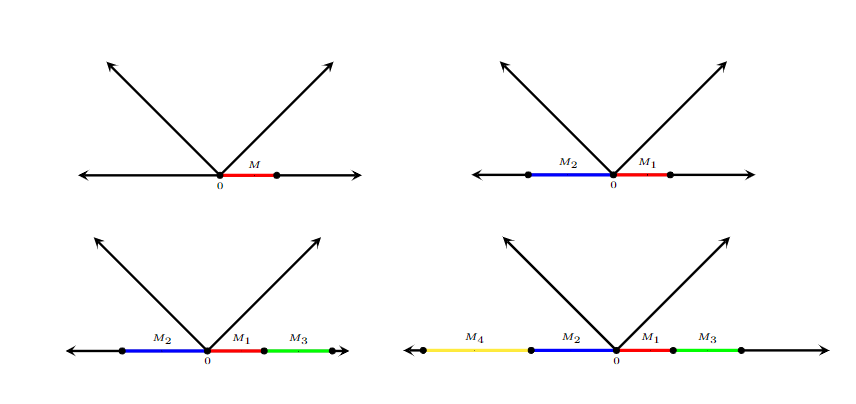}
    \caption{Solutions for the 1-, 2-, 3-, and 4-bubble isoperimetric problem on $\mathbb{R}$ with density $|x|$. For these masses, note that $M_1 \leq M_2 \leq M_3 \leq M_4$}
    \label{fig:FirstFourBubbles}
\end{figure}

Although working in such a low dimension reduces the complexity of our regions, there are still complications we must deal with. For example, a priori, each region may  consist of multiple intervals which may or may not share endpoints with other regions. In \cite{ChambersBongiovanniETAL18}, it was found that the 2-bubble had either two or three intervals, depending on the actual sizes (not just relative sizes) of the two regions in question. An early step in our proof will be to show that we can shrink our total perimeter by consolidating each region into, at most, two intervals. This will reduce our $n$-bubble candidates to, at most, $2n$ total intervals. From there, we examine a number of tools we can use to further consolidate our number of intervals. These tools are enough to fully prove the 3-bubble. For the 4-bubble, we those tools still leave us with 5 intervals, and so an explicit comparison is needed. A crucial element of this is a set of ``optimal arrangment'' propositions that tell us the perimeter-minimizing way to order 4, 5, or 6 adjacent regions. These propositions require many cases and are included as an appendix. 

\subsection{Acknowledgements}

This work was completed as part of Southwestern University's SCOPE program, whose aim is to encourage and fund student-faculty collaborations. All authors are grateful for the support of Southwestern University and the SCOPE program.

\section{Preliminaries}

In this section we include some basic definitions and early results.

\begin{definition}
A \textbf{density} function on $\mathbb{R}$ is simply a nonnegative function $f$.
\end{definition}

\begin{definition} Given an interval $[a,b]$ and a density $f$, the \textbf{weighted mass} of the interval with respect to the density is $\int_a^b f$.  The \textbf{weighted perimeter} with respect to the density is defined to be $f(a) + f(b)$.
\end{definition}
Note that weighted mass will often simply be called mass (and occasionally, by abuse of language, be called area or volume), and weighted perimeter will often be called perimeter. When looking at an interval $[a,b]$, we will exclusively use the word \textbf{length} to refer to the quantity $b-a$.

\begin{definition}
A \textbf{region} on $\mathbb{R}$ is a collection of disjoint intervals $[a_i, b_i]$ with  total mass $M$ and perimeter $P$ calculated as
    \begin{align}
        M &= \displaystyle\sum_i \int_{a_i}^{b_i} f\\
        P &= \displaystyle\sum_i f(a_i) + f(b_i)
    \end{align}
\end{definition}
We remark that, under most natural density functions (in particular: density functions that have a positive lower bound), regions with finite mass or finite perimeter must necessarily be made up of only finitely many intervals. With other density functions (like $f(x) = |x|$, for example), it is possible to have a region of finite mass consisting of infinitely many intervals. In such a scenario, a point of zero density will act as an accumulation point for the interval endpoints.
\begin{definition}
A region $R$ with mass $M$ and perimeter $P$ is said to be \textbf{isoperimetric} (also referred to as a \textbf{bubble}) if, out of all possible regions with mass $M$, $R$ has the least perimeter.
\end{definition}

We can also consider multiple regions on the same number line. In such a situation, each region consists of (possibly multiple disjoint) intervals. Two intervals from different regions are either completely disjoint, or meet at a single endpoint. In addition to measuring the mass and perimeter of each region, we can measure the \textbf{total (weighted) perimeter} by summing the perimeter of each of the regions. In doing so, an endpoint where two intervals meet is only counted once. 

\begin{definition}
A configuration of n regions $R_1, \dots, R_n$ with masses $M_1, \dots, M_n$ is said to be \textbf{isoperimetric} (also referred to as an \textbf{$n$-bubble}) if, out of all possible configurations of $n$ regions with these same masses, this configuration has the least total perimeter.
\end{definition}
2-bubbles and 3-bubbles are traditionally called a \textbf{double bubble} and \textbf{triple bubble}.

The major aim of this paper is to prove a particular configuration of intervals is isoperimetric in the case of the $3$-bubble and $4$-bubble for density $|x|$. To begin exploring isoperimetry in earnest, we begin by reducing the total number of intervals that will appear in a perimeter-minimizing configuration of $n$ regions. A priori, each region could consist of one or more intervals, and that there might be ``empty'' intervals that do not correspond to any particular region. However, we quickly eliminate this possibility by showing that perimeter is minimized by ``condensing'' our intervals near the origin.

\begin{proposition}
On $\{ x\in \mathbb{R} : x \geq 0 \}$, with an increasing density function $f$: suppose we have $n$ regions $R_i$ with masses $M_i$. Then one can create a new configuration of regions which simultaneously preserves mass and decreases perimeter, so that each region consists of a single interval and there are no empty intervals.
\label{prop:ConsolidateOnR+}
\end{proposition}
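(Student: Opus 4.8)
The plan is to pass to a ``mass coordinate'' in which mass becomes ordinary length, so that every rearrangement automatically preserves the masses, and then to carry out two reductions: first close all gaps and push the configuration to the origin, and second merge the pieces of each region. Since $f$ is increasing and nonnegative on $[0,\infty)$, the cumulative mass $F(x)=\int_0^x f$ is increasing (strictly, on $\{f>0\}$), and the substitution $u=F(x)$ sends a region of mass $M_i$ to a subset of the $u$-line of Lebesgue measure $M_i$, sends intervals to intervals, and transports the perimeter exactly: a boundary point at $x^\ast$ contributes $f(x^\ast)=h(u^\ast)$, where $h(u):=f(F^{-1}(u))$ is again increasing and nonnegative and $u^\ast=F(x^\ast)$. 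Thus in $u$-space the task becomes: minimize $\sum_{\text{boundary points } u_j} h(u_j)$ over configurations whose $i$-th region has $u$-measure $M_i$, with $h$ increasing. Because the constraint is now on measure rather than on a weighted integral, any rigid translation or reshuffling of pieces preserves all masses, which is the feature that makes the argument go through.

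First I would slide everything to the origin and close gaps. Translating the whole occupied set so that its leftmost point sits at $u=0$ moves every boundary point to a smaller value, hence, as $h$ is increasing, does not increase the perimeter. Then, for each interior gap, I shift the part of the configuration lying to the right of the gap leftward until the gap closes; this moves every boundary point on the right side strictly left and collapses the two endpoints bounding the gap into a single point, which is either a genuine transition (so one nonnegative term is deleted) or disappears entirely when the two newly adjacent pieces share a region. After finitely many such steps the occupied set is the single interval $[0,M]$ with $M=\sum_i M_i$, consecutive pieces carry distinct labels, and the perimeter has not increased.

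The heart of the argument is the second reduction: merging the pieces of a region without increasing perimeter. If some region $a$ still appears in two or more pieces, pick two consecutive occurrences $J_p$ and $J_q$ (so the pieces strictly between them avoid the label $a$), delete $J_p$, shift the intervening pieces $J_{p+1},\dots,J_{q-1}$ left by $\mathrm{len}(J_p)$ to close the hole, and reinsert the deleted mass immediately adjacent to $J_q$ so the two $a$-pieces fuse. A direct bookkeeping of the boundary points inside the affected block $[\alpha,\beta]$ shows that the surviving interior boundary points are precisely the old ones shifted left by $\mathrm{len}(J_p)$ with the single smallest one removed; matching each new point with the old point that dominates it and discarding the leftover nonnegative term $h(\alpha+\mathrm{len}(J_p))$ shows the perimeter does not increase, while the number of pieces drops by one. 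Iterating drives every region to a single interval, producing the desired configuration, which I then transport back to $x$-space via $x=F^{-1}(u)$.

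The step I expect to be the main obstacle is exactly this merging move: naive adjacent swaps can \emph{increase} the perimeter, since a deleted transition point may be replaced by a larger one, so the decrease is not automatic and rests on the precise ``delete-and-reinsert-adjacent'' rearrangement together with the monotonicity of $h$ and the fact that every surviving boundary point moves strictly left. A secondary technical point, special to densities like $|x|$, is that a region of finite mass may consist of infinitely many intervals accumulating at the origin; here the finite sequence of moves should be replaced by a direct comparison of the candidate $n$-interval configuration against the given one (the accumulation point carries density $f(0)$, contributing controllably), which I would handle by the same monotonicity estimate in $u$-space.
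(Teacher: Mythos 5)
Your argument is correct, but it takes a genuinely different route from the paper's. The paper does the whole reduction in one global step: it names each region by its rightmost endpoint $b_1<\dots<b_n$, reassembles the regions as adjacent single intervals starting at the origin in that same order, and observes that the new rightmost endpoint $b_j'$ of $R_j$ satisfies $b_j'\le b_j$ (since $[0,b_j]$ originally contained at least the mass $M_1+\dots+M_j$ that now exactly fills $[0,b_j']$), so monotonicity of $f$ gives $\sum f(b_j')\le\sum f(b_j)\le P_{\mathrm{orig}}$ because the $f(b_j)$ are only some of the nonnegative terms in the original perimeter. You instead pass to the mass coordinate $u=F(x)$ so that mass preservation becomes automatic, and then run an iterative local procedure (close gaps, then repeatedly delete-and-reinsert to fuse two pieces of the same region); your bookkeeping for the merge step is right --- the surviving interior boundary points are the old ones shifted left with the smallest one discarded, so monotonicity of $h$ gives non-increase. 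What your approach buys is transparency about why masses are preserved and a reusable local move; what the paper's buys is that the single global comparison works verbatim even when a region consists of infinitely many intervals accumulating at a zero of the density (a case the paper explicitly flags), whereas your finite sequence of merges does not terminate there and, as you yourself note, must fall back on exactly the paper's style of direct comparison between the given configuration and the candidate $n$-interval one. Two small points to tidy if you write this up: define $F^{-1}$ as a one-sided inverse so that $h$ is well defined when $f$ vanishes on an interval, and note that the conclusion is non-strict decrease of perimeter (which is all the paper's proof establishes as well).
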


\begin{proof}

Suppose we have an arbitrary configuration on $n$ regions, each consisting of (possibly multiple) intervals on the non-negative axis $x \geq 0$. WLOG, we identify $R_1$ to have a maximum value (and rightmost endpoint) of $b_1$, $R_2$ to have a maximum value of $b_2$, and so on, named so that $b_1 < b_2 < \dots < b_n$. As seen in Figure \ref{Figure3Consolidate}, we can create a new configuration in which regions consist of a single interval, each of which are adjacent and ordered so that region $R_i$ sits to the left of region $R_j$ if $i < j$. Furthermore, this can clearly be done in such a manner that preserves the mass of each region.

%
%
     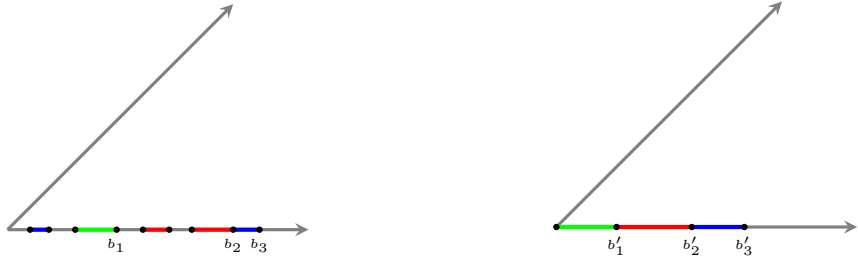
\begin{figure}[H]
    \centering
    \begin{subfigure}[h]{0.4\textwidth}    
    \begin{tikzpicture}
    \draw[gray, -stealth,very thick](0,0) -- (3,3);
    \draw[gray, -stealth,very thick] (0,0) -- (4,0);

    \filldraw[green,ultra thick] (.9,0) -- (1.45,0);

    \filldraw[blue,ultra thick] (.3,0) -- (.55,0);
    \filldraw[blue,ultra thick] (3,0) -- (3.35,0);

    \filldraw[red,ultra thick] (2.15,0) -- (1.8,0);
    \filldraw[red,ultra thick] (2.45,0) -- (3,0);


    \filldraw (.9,0) circle (1pt);
    \filldraw (1.45,0) circle (1pt) node[anchor=north] {\tiny $b_1$};
    \filldraw (.3,0) circle (1pt) ;
    \filldraw (.55,0) circle (1pt);
    \filldraw (2.15,0) circle (1pt);
    \filldraw (1.8,0) circle (1pt);
    \filldraw (3,0) circle (1pt) node[anchor=north] {\tiny $b_2$};
    \filldraw (3.35,0) circle (1pt) node[anchor=north] {\tiny $b_3$};
    \filldraw (2.45,0) circle (1pt);

    \end{tikzpicture}
	\end{subfigure}
 \hfill
     \begin{subfigure}[h]{0.4\textwidth}    
    \begin{tikzpicture}
    \draw[gray, -stealth,very thick](0,0) -- (3,3);
    \draw[gray, -stealth,very thick] (0,0) -- (4,0);

    \filldraw[green,ultra thick] (0,0) -- (0.8,0);

    \filldraw[blue,ultra thick] (1.8,0) -- (2.5,0);

    \filldraw[red,ultra thick] (0.8,0) -- (1.8,0);


	\filldraw (0,0) circle (1pt);    
    \filldraw (0.8,0) circle (1pt) node[anchor=north] {\tiny $b_1'$};
    \filldraw (1.8,0) circle (1pt) node[anchor=north] {\tiny $b_2'$};
    \filldraw (2.5,0) circle (1pt) node[anchor=north] {\tiny $b_3'$};

    \end{tikzpicture}
    \end{subfigure}
    \caption{We see how an arbitrary arrangement (on the left) can be consolidated (shown the right) in a manner that lowers perimeter.}
	\label{Figure3Consolidate}
\end{figure}

Because of how we have constructed these regions, it is clear that the new rightmost endpoint $b_j'$ of the region $R_j$ satisfies $0 < b_j' \leq b_j$. Since $f$ is assumed to be increasing, we know $f(b_j') \leq f(b_j)$. This gives us the following inequalities for the new and original perimeter:
\begin{align}
P_{new} = \displaystyle \sum_{j=1}^n f(b_j') &\leq \displaystyle \sum_{j=1}^n f(b_j) \leq P_{orig} 
\end{align}
which completes the proof.
\end{proof}
We will collectively use the word \textbf{reconfiguration} to describe exercises in rearranging and consolidating intervals, when in such a way that preserves weighted masses.
\begin{corollary}
If a configuration of $n$ regions is isoperimetric, then the configuration consists of at most $2n$ adjacent intervals, with the origin contained in at least one interval (either in the interior or as an endpoint).
\label{Cor:CondenseToTheOrigin}
\end{corollary}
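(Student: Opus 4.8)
The plan is to deduce the corollary from Proposition \ref{prop:ConsolidateOnR+} by cutting the line at the origin and handling the two half-lines separately. The two features of the density $f(x) = |x|$ that make this work are that $f$ is increasing along each ray moving away from the origin and that $f(0) = 0$; the latter means that any endpoint placed at the origin contributes nothing to the weighted perimeter, which is exactly what lets me glue the two one-sided consolidations together for free.

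First I would reduce to intervals lying entirely on one side of the origin. Given any configuration (in particular an isoperimetric one, which a priori might even use infinitely many intervals accumulating at $0$, as noted after the definition of a region), I cut every interval that straddles the origin into its nonnegative and nonpositive pieces. Because $f(0) = 0$, this introduces a new endpoint at the origin at zero perimeter cost and preserves every mass, so the total perimeter is unchanged. Now every interval lies in $\{x \geq 0\}$ or in $\{x \leq 0\}$, and each region has a well-defined positive part and negative part.

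Next I would apply Proposition \ref{prop:ConsolidateOnR+} on the nonnegative ray to the positive parts of $R_1, \dots, R_n$: this consolidates them, without increasing perimeter and preserving each mass, into at most $n$ adjacent intervals packed against the origin, one per region, with no empty intervals. Reflecting through the origin via $x \mapsto -x$ carries $\{x \leq 0\}$ to $\{x \geq 0\}$ and leaves $|x|$ unchanged, so the same proposition applies verbatim to the negative parts, consolidating them into at most $n$ adjacent intervals packed against the origin from the left. Each region therefore survives as at most one interval on each side, giving at most $2n$ intervals in total; since both blocks abut the origin and neither contains an empty interval, their union is a single contiguous block, and the origin is either the shared endpoint of the two innermost intervals or, when one region occupies both sides of $0$, an interior point of that region. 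This yields the claimed structure.

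The point demanding the most care is the logical direction of the statement. What the steps above produce is a \emph{reconfiguration}, of perimeter no larger than the original, having the desired form; to conclude that an isoperimetric configuration itself has this form I would argue that the reductions are strictly perimeter-decreasing whenever the conclusion fails. Since $f(x)=|x|$ is strictly increasing away from $0$, the inequality $f(b_j') \leq f(b_j)$ from Proposition \ref{prop:ConsolidateOnR+} is strict as soon as some $[0,b_j]$ contains empty space or an interval belonging to another region, i.e. whenever the configuration is not already a single contiguous block of at most $2n$ intervals about the origin; such a configuration can then be strictly improved and cannot be isoperimetric. A secondary item I would verify explicitly is the perimeter bookkeeping at the origin, namely that the endpoint shared by the two one-sided consolidations is counted once and costs $f(0)=0$, so that no hidden cost appears when the two halves are rejoined.
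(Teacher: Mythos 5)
Your proposal is correct and follows essentially the same route as the paper: apply Proposition \ref{prop:ConsolidateOnR+} separately to each side of the origin (the paper leaves the reflection and the zero-cost cut at $x=0$ implicit) to obtain at most $n$ intervals per side packed against the origin. Your added remark about strictness of the perimeter decrease when the conclusion fails is a detail the paper's one-line proof glosses over, but it does not change the approach.
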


\begin{proof}
Proposition \ref{prop:ConsolidateOnR+} says that perimeter can be lowered by reconfiguring regions on each side of the origin, so that there are at most $n$ intervals on the positive and $n$ on the negative side. Furthermore, as our consolidation from before occurs at the origin, it is clear that the origin will either be between two different regions (in which case it's an endpoint of each), or in a single region (with some mass to be found on both the positive and negative side).
\end{proof}

\begin{definition}
A configuration of $n$ regions that consists of at most $2n$ adjacent intervals (with at most 2 intervals per region), with the origin contained either in an interior of an interval or at the endpoint of (at least one) interval, is said to be a \textbf{condensed configuration of $n$ regions}.
\end{definition}

Beyond simply knowing that our regions accumulate near the origin, we also learn that we can reorder them so that they ``grow'' from smallest to largest mass as we move further out from the origin. This is summarized in the following

\begin{proposition}[Transposition Lemma]
\label{prop:TranspositionLemma}
On $\{ x \geq 0 \}$, consider two intervals $R_i = [a_i, b_i]$ of mass $M_i$. Suppose each $R_i$ has mass $M_i$, with $M_2 < M_1$. Finally, suppose $a_2 = b_1$. Then we would reduce total perimeter by switching the relative positions of $R_1$ and $R_2$.
\end{proposition}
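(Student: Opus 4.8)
The plan is to show that the swap is a purely local reconfiguration that moves only one endpoint---the shared interior junction---and that this point moves \emph{toward} the origin, thereby lowering its density contribution. First I would fix notation. Since $a_2 = b_1$, the two intervals are adjacent and together occupy $[a_1, b_2]$, with $0 \le a_1 < b_1 < b_2$, and $R_1 = [a_1, b_1]$ sits to the left of $R_2 = [b_1, b_2]$. Accounting for the shared endpoint $b_1$ exactly once, the total weighted perimeter contributed by this pair is $f(a_1) + f(b_1) + f(b_2)$.

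Next I would describe the reconfiguration precisely. I keep the combined occupied region $[a_1, b_2]$ fixed and reverse the order, placing the smaller region $R_2$ on the left: let $c$ be the unique point with $\int_{a_1}^{c} f = M_2$, and assign $R_2 = [a_1, c]$ and $R_1 = [c, b_2]$. Because the total mass $M_1 + M_2 = \int_{a_1}^{b_2} f$ is unchanged and $\int_{a_1}^{c} f = M_2$, the remaining mass forces $\int_{c}^{b_2} f = M_1$ automatically, so $R_1$'s new right endpoint is exactly $b_2$ and both masses are preserved. Crucially, the outer endpoints $a_1$ and $b_2$ do not move, so any neighboring intervals and the junctions at $a_1$ and $b_2$ are untouched; the only endpoint that changes is the interior junction, which moves from $b_1$ to $c$. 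Hence the new perimeter is $f(a_1) + f(c) + f(b_2)$, and the total perimeter changes by exactly $f(c) - f(b_1)$.

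The key step is to show $c < b_1$. Since $f(x) = |x|$ is positive on the positive axis, the function $t \mapsto \int_{a_1}^{t} f$ is strictly increasing; from $\int_{a_1}^{c} f = M_2 < M_1 = \int_{a_1}^{b_1} f$ we conclude $c < b_1$. Because $f$ is strictly increasing, this gives $f(c) < f(b_1)$, so the perimeter change $f(c) - f(b_1)$ is negative and the swap strictly reduces total perimeter, as claimed.

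I do not expect a serious obstacle here: the entire content is the monotonicity of the mass integral combined with the monotonicity of $f$. The only points demanding care are bookkeeping ones---first, correctly isolating that the shared endpoint is counted once so that the net change collapses to the single term $f(c) - f(b_1)$, and second, verifying that the reconfiguration genuinely fixes both outer endpoints so that the argument stays local and no other junction contributes. Once these are in place, the inequality $c < b_1$ follows immediately.
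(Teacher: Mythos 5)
Your proof is correct and is essentially the paper's argument, made explicit: the paper also fixes the outer endpoints, transposes the two intervals, and observes that the only moving endpoint is the shared junction, which moves toward the origin when the smaller mass is placed closer to it, so the increasing density lowers perimeter. You have simply supplied the bookkeeping the paper delegates to its figure.
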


\begin{proof}
This is seen immediately in Figure \ref{Figure4Transpose}. Keeping the leftmost and rightmost endpoints fixed, and transposing the two intervals if necessary, we see that the perimeter is lowered when the interval with less mass is placed closer to the origin (as it moves the shared endpoint closer to the origin).
\end{proof}

%
%

     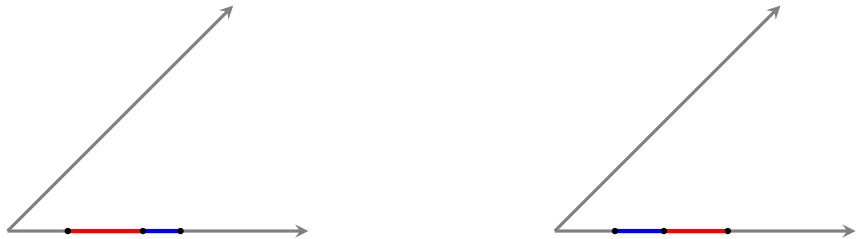
\begin{figure}[H]
    \centering 
    \begin{subfigure}[h]{0.4\textwidth}    
    
    \begin{tikzpicture}
    \draw[gray, -stealth,very thick](0,0) -- (3,3);
    \draw[gray, -stealth,very thick] (0,0) -- (4,0);

    \filldraw[blue,ultra thick] (1.8,0) -- (2.3,0);

    \filldraw[red,ultra thick] (0.8,0) -- (1.8,0);


    \filldraw (0.8,0) circle (1pt);
    \filldraw (1.8,0) circle (1pt);
    \filldraw (2.3,0) circle (1pt);

    \end{tikzpicture}
	\end{subfigure}
 \hfill
     \begin{subfigure}[h]{0.4\textwidth}    
    
    \begin{tikzpicture}
    \draw[gray, -stealth,very thick](0,0) -- (3,3);
    \draw[gray, -stealth,very thick] (0,0) -- (4,0);

    \filldraw[red,ultra thick] (1.45,0) -- (2.3,0);

    \filldraw[blue,ultra thick] (0.8,0) -- (1.45,0);


    \filldraw (0.8,0) circle (1pt);
    \filldraw (1.45,0) circle (1pt);
    \filldraw (2.3,0) circle (1pt);

    \end{tikzpicture}
	\end{subfigure}
 
    \caption{Transposition of adjacent intervals: If we transpose $R_1$ and $R_2$ without changing their global location, the lower perimeter results from placing the smaller interval closer to the origin (as the only endpoint to move is the internal endpoint). Iterating the transposition lemma guarantees that, on one side of the interval, regions will be ordered (according to weighted area) from smallest to largest as we move away from the origin.}
    \label{Figure4Transpose}
\end{figure}

We conclude this section with an important  observation about comparing intervals of appropriate masses. Although the proposition is stated for the positive real number line, it is clear (due to the symmetry of $|x|$) that there are equivalent statements for comparing intervals on either side of the number line; in such a comparison, all that matters is which interval's ``inner'' endpoint is positioned closer to the origin.

\begin{proposition}[Length and Endpoint Inequalities]
\label{prop:LengthEndpointInequality}
On $\mathbb{R}$ with density $|x|$: Consider two intervals $R_1 = [a_1, b_1]$,  $R_2 = [a_2, b_2]$. Suppose that each of these intervals contains the same weighted mass (so $M_1 = M_2$), and suppose that $a_1 < a_2$. Then we get the following two inequalities: 
    \begin{align}
        b_1 &< b_2\\
      b_1 - a_1 &> b_2 - a_2
    \end{align}
\end{proposition}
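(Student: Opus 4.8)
The plan is to compute everything explicitly, exploiting the fact that on $\{x \geq 0\}$ the density $|x|$ reduces to $f(x) = x$, whose antiderivative gives the weighted mass of an interval in the clean form $\int_{a}^{b} x\,dx = \tfrac{1}{2}(b^2 - a^2)$. (We may assume both intervals lie on the positive axis, since the remark preceding the statement reduces the general case to this one by the symmetry of $|x|$.) Under this reduction the hypothesis $M_1 = M_2$ becomes the single algebraic identity $b_1^2 - a_1^2 = b_2^2 - a_2^2$, and I expect both desired inequalities to fall out of elementary manipulation of this identity together with the hypothesis $a_1 < a_2$.

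First I would establish the endpoint inequality $b_1 < b_2$. Rearranging the mass identity gives $b_1^2 = b_2^2 - (a_2^2 - a_1^2)$. Since $0 \le a_1 < a_2$ forces $a_2^2 - a_1^2 > 0$, this yields $b_1^2 < b_2^2$; and because each interval carries positive mass on the positive axis (so $b_i > a_i \ge 0$ and in particular $b_i > 0$), taking square roots preserves the inequality and gives $b_1 < b_2$.

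For the length inequality I would rewrite the mass identity in factored form, $(b_1 - a_1)(b_1 + a_1) = (b_2 - a_2)(b_2 + a_2)$, that is $L_1 S_1 = L_2 S_2$, where $L_i = b_i - a_i$ is the length and $S_i = b_i + a_i$ is the sum of the endpoints. The hypothesis $a_1 < a_2$ together with the inequality $b_1 < b_2$ just proved combine to give $S_1 < S_2$. Since the two products $L_1 S_1$ and $L_2 S_2$ are equal while $S_1 < S_2$, and all four quantities are positive, I can divide to conclude $L_1 = L_2 \, S_2 / S_1 > L_2$, which is exactly $b_1 - a_1 > b_2 - a_2$.

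The argument is short, so I do not anticipate a serious obstacle; the only points demanding care are bookkeeping ones. I must confirm positivity of $S_1 = a_1 + b_1$ before dividing, which holds since $b_1 > 0$ even in the boundary case $a_1 = 0$, and I must carry out the two inequalities in the correct order, since the length bound depends on $b_1 < b_2$ having already been secured. Finally I would note that the analogous statement on the negative axis, and the version comparing intervals on opposite sides of the origin, follow verbatim from the symmetry $|-x| = |x|$, with ``closer to the origin'' playing the role of ``smaller inner endpoint.''
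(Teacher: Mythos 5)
Your proof is correct, but it takes a more computational route than the paper's. The paper argues qualitatively: the first inequality follows because two intervals of equal mass cannot be nested (if $a_1 < a_2$ and $b_1 \geq b_2$ then $[a_2,b_2]$ would sit strictly inside $[a_1,b_1]$ and carry less mass), and the second follows from the observation that the outer interval has larger \emph{average density}, so equal mass forces smaller length. That argument works verbatim for any density that increases as one moves away from the origin. You instead exploit the specific density $|x|$ to get the closed form $M = \tfrac{1}{2}(b^2 - a^2)$, after which both inequalities drop out of the identity $(b_1-a_1)(b_1+a_1) = (b_2-a_2)(b_2+a_2)$ together with $a_1 + b_1 < a_2 + b_2$. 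What you lose in generality you gain in explicitness: your factored identity makes the trade-off between length and distance from the origin exactly quantitative, and it sidesteps any appeal to an informal ``average density'' comparison. Your reduction to the positive axis is legitimate — the paper's own remark preceding the proposition says the statement is intended for the positive half-line, with the other cases following by symmetry — and your attention to the positivity of $a_1 + b_1$ before dividing (which requires the intervals to be nondegenerate, i.e.\ to carry positive mass) is the right bookkeeping point to flag.
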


\begin{proof}
The first inequality is an immediate consequence of the two intervals enclosing the same mass. For the second inequality, note that since our density function is increasing as we move away from the origin, the  average density of $[a_2, b_2]$ will be larger than that of $[a_1, b_1]$: since they each contain the same amount of mass, this means the length of the interval $[a_2, b_2]$ will be smaller.
\end{proof}

\begin{corollary}
The first inequality above continues to hold true in the cases where $M_2 \geq M_1$. The second inequality also holds true in the cases where $M_2 \leq M_1$.
\end{corollary}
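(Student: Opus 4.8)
The plan is to reduce each claim back to the equal-mass Proposition \ref{prop:LengthEndpointInequality} by inserting an auxiliary interval of the ``correct'' mass, and then to use the fact that, for a positive density, fixing the left endpoint and increasing the mass can only push the right endpoint further to the right. Throughout I assume (as in the proposition) that both intervals lie on $\{x \geq 0\}$ with $a_1 < a_2$; the case on the negative axis follows by the symmetry of $|x|$, as already noted in the remark preceding the proposition.

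For the first inequality, suppose $M_2 \geq M_1$ and introduce the auxiliary interval $R_2' = [a_2, b_2']$ chosen to have mass exactly $M_1$. Since $R_2'$ and $R_2$ share the left endpoint $a_2$ but $R_2'$ carries the smaller (or equal) mass, we have $b_2' \leq b_2$. Now $R_1$ and $R_2'$ both have mass $M_1$ with $a_1 < a_2$, so the first inequality of Proposition \ref{prop:LengthEndpointInequality} applies and gives $b_1 < b_2'$. Chaining these yields $b_1 < b_2' \leq b_2$, as desired.

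For the second inequality, suppose instead $M_2 \leq M_1$ and again take $R_2' = [a_2, b_2']$ with mass exactly $M_1$. This time $R_2'$ carries the larger (or equal) mass from the same left endpoint, so $b_2' \geq b_2$ and hence $b_2' - a_2 \geq b_2 - a_2$. Applying the second (length) inequality of Proposition \ref{prop:LengthEndpointInequality} to the equal-mass pair $R_1, R_2'$ gives $b_1 - a_1 > b_2' - a_2$, and combining with the previous step yields $b_1 - a_1 > b_2' - a_2 \geq b_2 - a_2$.

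The argument is short, and I expect no serious obstacle; the only point requiring care is bookkeeping the direction of each monotonicity. One must match the hypothesis ($M_2 \geq M_1$ versus $M_2 \leq M_1$) to the correct conclusion, choose the auxiliary interval to share the outer endpoint $a_2$ so that it is the equal-mass comparison that gets invoked, and check that increasing mass from a fixed left endpoint moves the right endpoint monotonically in the expected direction (immediate, since $|x| > 0$ for $x > 0$). As an alternative one could compute directly from the closed form $M = (b^2 - a^2)/2$, but the auxiliary-interval reduction keeps the proof in the same spirit as the proposition it extends.
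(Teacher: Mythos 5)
Your argument is correct: introducing the auxiliary interval $R_2' = [a_2, b_2']$ of mass $M_1$, using monotonicity of the right endpoint in the enclosed mass (valid since the density is positive on $\{x>0\}$ and $a_2 > 0$), and then invoking the equal-mass case of Proposition \ref{prop:LengthEndpointInequality} gives both inequalities with the correct strictness. The paper states this corollary without any proof at all, so there is nothing to compare against; your auxiliary-interval reduction is a clean formalization of the argument the authors evidently considered immediate.
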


\section{Rearranging masses across the origin}

As the previous section shows, an $n$-bubble will have at most $2n$ intervals, and each region within an $n$-bubble will have at most two intervals (one on each side of the origin). In this section, we will develop strategies for lowering perimeter by reconfigurations that move intervals across the origin (and consolidating two intervals from the same region into a single interval). As a corollary to our results, we will provide concise proofs for the single and double bubble results for $\mathbb{R}$ with density $|x|$, previously proven in \cite{HuangMorgan19}.

\begin{proposition}[Mass Stealing Lemma]
\label{prop:OuterMassStealing}
 On $(\mathbb{R}, |x|)$, suppose we have a condensed configuration of $n$ regions. Suppose the outermost interval on the negative side of the origin comes from a region that also has an interval on the positive side of the origin. Then there exists a modified configuration of the same regions, but with less perimeter, achieved by moving all mass from this region to the negative side.
\end{proposition}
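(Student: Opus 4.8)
The plan is to exhibit one explicit reconfiguration and then show, side by side, that the perimeter saved on the positive axis strictly exceeds the perimeter spent on the negative axis. Write $R$ for the region in question, let its positive interval be $I = [x_{j-1}, x_j]$ of mass $m$, and let its outermost negative interval be $[a,c]$ with $a < c \le 0$. I would build the new configuration from two independent moves: on the negative side, extend $[a,c]$ outward to $[a',c]$ so that the newly added piece carries exactly the mass $m$; on the positive side, delete $I$ and slide every positive interval lying outside $I$ inward, so that the positive intervals stay adjacent and each retains its own mass.

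First I would handle the negative side. Here is precisely where the hypothesis that $R$'s interval is \emph{outermost} on the negative side is used: only the free outer endpoint moves, so nothing else on the negative axis is disturbed and the shared endpoint $c$ is untouched. The mass balance $\int_{a'}^{a} |x|\,dx = m$ gives $|a'| = \sqrt{a^2 + 2m}$, so the added perimeter is
\begin{equation}
\Delta P_{\mathrm{neg}} = |a'| - |a| = \sqrt{a^2 + 2m} - |a|.
\end{equation}
Since $t \mapsto \sqrt{t^2 + 2m} - t$ is strictly decreasing for $t \ge 0$ and equals $\sqrt{2m}$ at $t = 0$, and since $a \neq 0$, I obtain the strict bound $\Delta P_{\mathrm{neg}} < \sqrt{2m}$.

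Next I would handle the positive side. Deleting $I$ removes the endpoint $x_j$ from the perimeter count, while sliding each outer interval inward fixes its mass but lowers its left endpoint; by Proposition~\ref{prop:LengthEndpointInequality}, applied iteratively from the innermost displaced interval outward (each inward shift of a left endpoint feeds the hypothesis for the next interval), every surviving outer endpoint strictly decreases and hence contributes strictly less density than before. Therefore the positive perimeter falls by at least $f(x_j) = x_j$, i.e. $\Delta P_{\mathrm{pos}} \le -x_j$. Because $I$ has mass $m$ we have $x_j^2 = x_{j-1}^2 + 2m \ge 2m$, so $x_j \ge \sqrt{2m}$. Combining the two estimates gives $\Delta P_{\mathrm{pos}} + \Delta P_{\mathrm{neg}} < -x_j + \sqrt{2m} \le 0$, the desired strict decrease.

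The main obstacle is the positive-side accounting: I must be sure that re-condensing the outer intervals genuinely only helps, which is not a single endpoint computation but the iterated use of the length/endpoint inequality described above. A secondary point requiring care is bookkeeping at the origin—whether the innermost intervals meet at the origin or the origin lies interior to some region's single spanning interval—but since $R$ is necessarily distinct from any origin-spanning region and the inner intervals never move, neither subcase affects the estimate. Everything then collapses to the clean comparison ``cost $< \sqrt{2m} \le$ saving.''
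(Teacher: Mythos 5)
Your proof is correct, and it takes a genuinely different route from the paper's. The paper performs the same reconfiguration (delete the positive interval $[R_{i-1},R_i]$, slide the outer positive intervals inward, and graft the mass onto the outermost negative interval), but then establishes the perimeter drop \emph{qualitatively}: it splits into two cases according to whether $R_{i-1}\le L_l$ or $R_{i-1}>L_l$ and in each case invokes the length/endpoint inequality (Proposition~\ref{prop:LengthEndpointInequality}) to compare the new outer endpoint $L_l'$ directly against $L_l+R_i$. You instead exploit the explicit quadratic mass formula for density $|x|$ to decouple the two sides entirely: the cost of absorbing mass $m$ at the outer negative endpoint is $\sqrt{a^2+2m}-|a|<\sqrt{2m}$ (strict because $|a|>0$), while the saving from deleting the right endpoint of a positive interval of mass $m$ is $x_j\ge\sqrt{2m}$, and the sliding of the outer positive intervals only helps. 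This buys you a clean, case-free argument with a quantitative strict gap, at the price of being wedded to the specific density $|x|$ (via $\int x\,dx = x^2/2$), whereas the paper's softer comparison would transfer to any increasing density for which the length/endpoint inequality is available. Your handling of the positive side (iterating Proposition~\ref{prop:LengthEndpointInequality} outward so that every surviving endpoint decreases) matches the paper's, and your remark about the origin-spanning subcase is harmless: even if the region in question were itself the one whose inner interval straddles the origin, your two estimates are unaffected, since they depend only on $|a|>0$ and on the mass of the deleted positive interval.
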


\begin{proof}
Suppose our $n$ regions are arranged as adjacent intervals with endpoints $-L_1, -L_2,..., -L_l$ on $\{x < 0\}$, and $R_1, R_2,..., R_m$ on $\{x > 0\}$. Furthermore, suppose the interval $[-L_l, -L_{l-1}]$ is part of a region that has additional mass in an interval on the right-hand side of the number line. Our strategy will be to move the mass from the RHS to adjoin it adjoin it to $[-L_l, -L_{l-1}]$ on the LHS, while sliding the outer intervals on the right closer to the origin to fill the empty space. 

The interval we move from the RHS is of the form $[R_{i-1}, R_i]$ for some $i$ between $1$ and $m$. The initial configuration will have a total perimeter of $$TP = L_l + L_{l-1} + ... + L_2 + L_1 + R_1 + R_2 + ... + R_{m-1} + R_m.$$  The new configuration will have total perimeter of $$TP = L_l' + L_{l-1} + ... + L_2 + L_1 + R_1 + R_2 ... + R_{i-1} + R_{i+1}' + ... + R_m'.$$
In this calculation, the endpoint $L_l$ has been shifted to the left, resulting in $L_l'$; for $j > i$, the endpoints $R_j$ have shifted to the left, resulting in $R_j'$; and the endpoint $R_i$ has disappeared.

\begin{figure}[H]
     \centering
     
    \begin{tikzpicture}
    \draw[stealth-stealth,very thick] (-4,0) -- (4,0); 
    \draw[stealth-,very thick] (-1,1) -- (0,0);
    \draw[-stealth,very thick](0,0) -- (1,1);
    \filldraw[red,ultra thick] (-2.7,0) -- (-2,0);
    \filldraw[red,ultra thick] (1.6,0) -- (2.05,0);

    \filldraw (-2.7,0) circle (2pt) node[anchor=north] {\tiny $-L_l$};
    \filldraw (-2,0) circle (2pt) node[anchor=north] {\tiny $-L_{l-1}$};
    \filldraw (-1.5,0) circle (.2pt) node[anchor=south] {. . .};
    \filldraw (-1,0) circle (2pt) node[anchor=north] {\tiny $-L_2$};
    \filldraw (-.5,0) circle (2pt) node[anchor=north] {\tiny $-L_1$};
    \filldraw (.4,0) circle (2pt) node[anchor=north] {\tiny $R_1$};
    \filldraw (.8,0) circle (2pt) node[anchor=north] {\tiny $R_2$};
    \filldraw (1.2,0) circle (.2pt) node[anchor=south] {. . .};
    \filldraw (1.6,0) circle (2pt) node[anchor=north] {\tiny $R_{i-1}$};
    \filldraw (2.05,0) circle (2pt) node[anchor=north] {\tiny $R_i$};
    \filldraw (2.4,0) circle (.2pt) node[anchor=south] {. . .};
    \filldraw (2.8,0) circle (2pt) node[anchor=north] {\tiny $R_{m-1}$};
    \filldraw (3.5,0) circle (2pt) node[anchor=north] {\tiny $R_{m}$};

    \end{tikzpicture}
    \caption{Configuration before joining $[R_{i-1},R_i]$ with $[-L_l,-L_{l-1}]$}
    \label{fig:beforeRiSwitch}
    \end{figure}
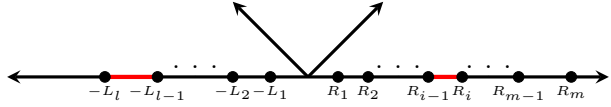

    
            
            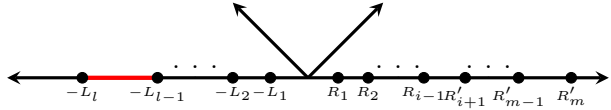
\begin{figure}[H]
    \centering
    \begin{tikzpicture}
    \draw[stealth-stealth,very thick] (-4,0) -- (4,0); 
    \draw[stealth-,very thick] (-1,1) -- (0,0);
    \draw[-stealth,very thick](0,0) -- (1,1);
    \filldraw[red,ultra thick] (-3,0) -- (-2,0);
    
    \filldraw (-3,0) circle (2pt) node[anchor=north] {\tiny $-L_l$};
    \filldraw (-2,0) circle (2pt) node[anchor=north] {\tiny $-L_{l-1}$};
    \filldraw (-1.5,0) circle (.2pt) node[anchor=south] {. . .};
    \filldraw (-1,0) circle (2pt) node[anchor=north] {\tiny $-L_2$};
    \filldraw (-.5,0) circle (2pt) node[anchor=north] {\tiny $-L_1$};
    \filldraw (.4,0) circle (2pt) node[anchor=north] {\tiny $R_1$};
    \filldraw (.8,0) circle (2pt) node[anchor=north] {\tiny $R_2$};
    \filldraw (1.2,0) circle (.2pt) node[anchor=south] {. . .};
    \filldraw (1.54,0) circle (2pt) node[anchor=north] {\tiny $R_{i-1}$};
    \filldraw (2.09,0) circle (2pt) node[anchor=north] {\tiny $R_{i+1}'$};
    \filldraw (2.4,0) circle (.2pt) node[anchor=south] {. . .};
    \filldraw (2.8,0) circle (2pt) node[anchor=north] {\tiny $R_{m-1}'$};
    \filldraw (3.5,0) circle (2pt) node[anchor=north] {\tiny $R_{m}'$};

    \end{tikzpicture}
    \caption{Configuration after joining results in new interval $[-L'_l,-L_{l-1}]$}
    \label{fig:afterRiSwitch}
\end{figure}

There are two possibilities:

\begin{itemize}
    \item \textbf{Case 1: $R_{i-1} \leq L_{l}$}. In this case, we see that that $R_i - R_{i-1} \geq L_l' - L_l$ by Proposition \ref{prop:LengthEndpointInequality}. This means that $R_i + L_l > L_l'$. Furthermore, for each $j > i$, we have $R_j' < R_j$. This is enough to tell us that our new perimeter is smaller than our original perimeter.

    \item \textbf{Case 2: $R_{i-1} > L_{l}$}. In this case, since $L_l < R_{i-1}$, we get $L_l' < R_i$ (again by Proposition \ref{prop:LengthEndpointInequality}). Using reasoning similar to above, we see that the new perimeter will again be smaller than our original perimeter. 

            
            
            

\end{itemize}

\end{proof}

The result we just proved does not depend on the relative size of the intervals involved (although, by Proposition \ref{prop:TranspositionLemma}, it can be made so that the outer interval contains the largest mass on its side of the origin). 

Proposition \ref{prop:OuterMassStealing} immediately gives us the result for the single and double bubbles. Previously proven in \cite{HuangMorgan19}, we include the results here for completeness.

\begin{theorem}[1- and 2-Bubble Theorem with Density $|x|$]
On $\mathbb{R}$ with density $|x|$: The $n$-bubble solution to the isoperimetric problem for $n=1$ is a single interval with one endpoint at the origin. For $n=2$, the solution is two intervals, each with an endpoint at the origin. 
\end{theorem}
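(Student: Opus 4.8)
The plan is to reduce to a short list of candidate configurations using the results already established and then to compare them directly. By Corollary~\ref{Cor:CondenseToTheOrigin}, any isoperimetric $n$-bubble is a condensed configuration: at most $2n$ adjacent intervals, at most two per region, with the origin contained in some interval. For $n=1$ this leaves only two shapes. Either the single region lies entirely on one side of the origin, in which case condensation forces it to be a single interval with an endpoint at the origin, or it straddles the origin as an interval $[-L,R]$ with $L,R>0$. In the latter case I would apply the Mass Stealing Lemma (Proposition~\ref{prop:OuterMassStealing}), viewing the region as carrying a piece on each side: moving all of its mass to one side strictly lowers perimeter. Equivalently, the Length and Endpoint Inequality (Proposition~\ref{prop:LengthEndpointInequality}) gives that the consolidated interval has perimeter $\sqrt{2M}<L+R$. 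Hence the optimal $1$-bubble is a single interval ending at the origin, and the mass constraint pins down its length and perimeter.

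For $n=2$ I would first dispose of the case where the origin is interior to one region's interval: if one region's single interval contains the origin in its interior while the other region sits to one side, the straddling region again has mass on both sides, so the Mass Stealing Lemma lowers perimeter, and the origin may be taken to be a shared endpoint. Assuming both half-lines carry mass, I apply the Mass Stealing Lemma to the outermost interval on the negative side: in an optimum that interval's region can have no mass on the positive side, and by the symmetry of the density $|x|$ the same holds on the right. With only two regions available, this forces one region to lie entirely on the negative side and the other entirely on the positive side; condensation then makes each a single interval, and the two meet at the origin, which is exactly the claimed configuration.

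It remains to rule out the configuration in which both regions lie on the same side of the origin, since the Mass Stealing Lemma does not apply when one half-line is empty. Here the Transposition Lemma (Proposition~\ref{prop:TranspositionLemma}) orders the regions as $[0,b_1]$ and $[b_1,b_2]$ with the smaller mass inside, and I would compare this against the two-sided arrangement by a single reconfiguration: sliding the outer region across the origin onto the empty side and invoking Proposition~\ref{prop:LengthEndpointInequality} shows the shared endpoint retreats toward the origin and the total perimeter strictly drops. Working out the mass constraints, the one-sided perimeter is $\sqrt{2M_1}+\sqrt{2(M_1+M_2)}$ while the two-sided perimeter is $\sqrt{2M_1}+\sqrt{2M_2}$, and the latter is strictly smaller. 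I expect this final comparison to be the only real obstacle: the Mass Stealing Lemma cleanly handles every case in which mass already appears on both sides, but the all-on-one-side candidate must be excluded by this extra reconfiguration (or direct computation), and some care is needed to treat a single interval straddling the origin as carrying mass on both sides so that the lemma applies.
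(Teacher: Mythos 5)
Your proposal follows the same route as the paper: condense to the origin via Corollary~\ref{Cor:CondenseToTheOrigin}, then use the Mass Stealing Lemma (Proposition~\ref{prop:OuterMassStealing}) to push each region entirely onto one side, treating an interval that straddles the origin as two intervals meeting at a point of zero density so that the lemma applies. The one place you genuinely add to the paper's argument is the final case for $n=2$ in which both regions lie on the same side of the origin. There the Mass Stealing Lemma is vacuous (no region has mass on both sides), so the chain of perimeter-non-increasing moves terminates at a configuration that is not of the claimed form; the paper's proof passes over this case, while you exclude it by the explicit comparison $\sqrt{2M_1}+\sqrt{2M_2} < \sqrt{2M_1}+\sqrt{2(M_1+M_2)}$, which is correct since an interval $[0,b]$ has mass $b^2/2$ and the stacked configuration $[0,\sqrt{2M_1}]\cup[\sqrt{2M_1},\sqrt{2(M_1+M_2)}]$ has the larger outer endpoint. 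This extra step (or the equivalent reconfiguration argument you sketch, moving the outer region across to the empty side) is needed for a complete proof, so your version is, if anything, more careful than the paper's. The only small imprecision is attributing the bound $\sqrt{2M}=\sqrt{L^2+R^2}<L+R$ to Proposition~\ref{prop:LengthEndpointInequality}; that is really a direct computation, but since you offer it only as an alternative to the Mass Stealing argument, nothing depends on it.
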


\begin{proof}
\textbf{For the single bubble}: If we start with any arbitrary configuration for one region with mass $M$, we are starting with an arbitrary (finite or countably infinite) union of intervals (with the only possible accumulation point for the endpoints being the origin). We can first consolidate near the origin using Corollary \ref{Cor:CondenseToTheOrigin}, resulting in at most two intervals: one from $[-a, 0]$ and one from $[0,b]$. Then, using Proposition \ref{prop:OuterMassStealing}, this can be converted into a single interval at $[0,c]$. Since each action we applied reduced perimeter or kept it the same, we can conclude that an interval of the form $[0,c]$ is isoperimetric.

\textbf{For the double bubble}: If we start with an arbitrary configuration for two regions with masses $M_1$ and $M_2$, we can first consolidate to the origin, resulting in at most two intervals on each side. We can then apply Proposition \ref{prop:OuterMassStealing} twice (first to the negative side of the origin, then to the positive side) to convert each region to a single interval. Doing so cannot make the perimeter larger, and the resulting configuration places each region entirely to one side of the origin. Thus, we can conclude that a pair of intervals of the form $[-d, 0]$ and $[0, e]$ form an isoperimetric double-bubble.
\end{proof}

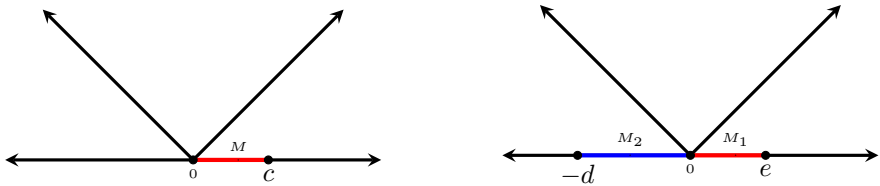
\begin{figure}[H]
     \centering
     \begin{subfigure}[h]{0.45\textwidth}
         \centering
         \begin{tikzpicture}
    \draw[stealth-,very thick] (-2,2) -- (0,0);
    \draw[-stealth,very thick](0,0) -- (2,2);
    \draw[stealth-stealth,very thick] (-2.5,0) -- (2.5,0); 
    \filldraw[red,ultra thick] (0,0) -- (1,0);

    \filldraw (0,0) circle (1.5pt) node[anchor=north] {\tiny$0$};
    \filldraw (1,0) circle (1.5pt) node[anchor=north] {$c$};
    
    \filldraw (.6,0) circle (.05pt) node[anchor=south] {\tiny$M$};

    \end{tikzpicture}
     \end{subfigure}
     \hfill
     \begin{subfigure}[h]{0.45\textwidth}
         \centering
         \begin{tikzpicture}
    \draw[stealth-,very thick] (-2,2) -- (0,0);
    \draw[-stealth,very thick](0,0) -- (2,2);
    \draw[stealth-stealth,very thick] (-2.5,0) -- (2.5,0); 
    \filldraw[red,ultra thick] (0,0) -- (1,0);
    \filldraw[blue, ultra thick] (0,0) -- (-1.5,0);

    \filldraw (0,0) circle (1.5pt) node[anchor=north] {\tiny$0$};
    \filldraw (1,0) circle (1.5pt) node[anchor=north] {$e$};
    \filldraw (-1.5,0) circle (1.5pt) node[anchor=north] {$-d$};
    
    \filldraw (.6,0) circle (.05pt) node[anchor=south] {\tiny$M_1$};
    \filldraw (-0.8,0) circle (.05pt) node[anchor=south] {\tiny$M_2$};
    \end{tikzpicture}
         \label{fig:medFinal}
     \end{subfigure}
      \caption{Solutions to the 1- and 2-bubble problem on $\mathbb{R}$ with density $|x|$}
\end{figure}

We conclude this section by remarking that, for configurations of $n$ regions with $n > 2$, we can still apply Proposition \ref{prop:OuterMassStealing} twice (first to the negative side, then to the positive side) to reduce the total number of intervals in our configuration. This gives us the following:

\begin{corollary}
An isoperimetric $n$-bubble is in condensed form, and has at most $2n-2$ adjacent intervals.
\end{corollary}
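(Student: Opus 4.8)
The plan is to combine the two facts already assembled: the condensed‑form statement of Corollary~\ref{Cor:CondenseToTheOrigin}, and the \emph{strict} perimeter improvement supplied by the Mass Stealing Lemma (Proposition~\ref{prop:OuterMassStealing}). First I would observe that since the configuration is isoperimetric it is in particular condensed, so it has at most $2n$ adjacent intervals, at most two per region, with the origin either interior to an interval or at a shared endpoint. Writing $s$ for the number of regions consisting of two intervals (necessarily one on each side of the origin, since same‑side intervals are consolidated by Proposition~\ref{prop:ConsolidateOnR+}), the total interval count is exactly $n+s$. Thus the bound $2n-2$ is equivalent to showing $s \le n-2$, i.e.\ that at least two of the $n$ regions are single intervals. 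Throughout I work with $n \ge 2$, since the bound $2n-2$ fails trivially for $n=1$ and the cases $n=1,2$ are already covered by the preceding theorem.

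The heart of the argument is to exhibit those two single‑interval regions using minimality, but first I would dispose of two degenerate possibilities. If the origin lies in the interior of some interval, then no region can share the origin as an endpoint, so no region straddles, $s=0$, and we already have at most $n \le 2n-2$ intervals. Next I would rule out all mass lying on one side of the origin: if, say, the negative axis were empty, the intervals would be $[0,R_1],\dots,[R_{n-1},R_n]$, and moving the outermost interval $[R_{n-1},R_n]$ across the origin to a mass‑matched interval $[-c,0]$ strictly lowers total perimeter, since equating masses gives $c^2 = R_n^2 - R_{n-1}^2 < R_n^2$, hence $c < R_n$, so the change in perimeter is $R_n - c > 0$. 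This contradicts isoperimetry, so I may assume the origin is a shared endpoint with intervals on both sides.

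With both sides nonempty, consider the region $A$ owning the outermost (leftmost) interval on the negative side. If $A$ also had an interval on the positive side, Proposition~\ref{prop:OuterMassStealing} would produce a configuration of strictly smaller perimeter, contradicting isoperimetry; hence $A$ consists of a single interval lying entirely on the negative side. By the symmetric version of the Mass Stealing Lemma (valid by the symmetry of $|x|$), the region $C$ owning the outermost interval on the positive side is likewise a single interval lying entirely on the positive side. Since $A$ is entirely negative and $C$ entirely positive, they cannot coincide, so at least two regions are single intervals; therefore $s \le n-2$ and the configuration has at most $2n-2$ adjacent intervals, as claimed.

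The main obstacle is not the Mass Stealing step, which is applied just once to each side, but the bookkeeping around the degenerate configurations that make the distinctness of $A$ and $C$ rigorous: one must confirm that the origin is genuinely a shared endpoint (ruling out the origin‑interior case, which forces $s=0$ directly) and that both sides actually carry intervals (ruling out the all‑on‑one‑side case via the short $c < R_n$ computation). Once these are settled, the characterization of $A$ as entirely negative and $C$ as entirely positive makes $A \ne C$ automatic, and the count $n + s \le n + (n-2) = 2n-2$ follows immediately.
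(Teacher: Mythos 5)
Your core argument is the paper's own: the paper disposes of this corollary with the remark that Proposition \ref{prop:OuterMassStealing} can be applied once to each side of the origin, so that the outermost interval on each side belongs to a region lying entirely on that side; your identification of two distinct single-interval regions $A$ and $C$ and the count $n+s\le n+(n-2)$ is a careful write-up of exactly that. However, one step in your preliminary case analysis is wrong. You claim that if the origin lies in the interior of some interval, then ``no region can share the origin as an endpoint, so no region straddles, $s=0$.'' The second inference does not follow from the first: a region straddles the origin if it has one interval on each side, and it need not touch the origin to do so. For instance, a condensed configuration could consist of $B=[-4,3]$ (origin interior), flanked by $A^-=[-5,-4]$ and $A^+=[3,4]$; here $A$ straddles, $s\ge 1$, and your claimed bound of $n$ intervals for this case fails. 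So as written, the origin-interior branch of your proof is not established.

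The gap is easily repaired, and in two different ways consistent with the paper's toolkit. Either invoke Proposition \ref{prop:LocatingTheOriginAtAnEndpoint}, which shows that an isoperimetric configuration cannot have the origin in the interior of an interval at all, so the case is vacuous; or simply observe that when the origin is interior to an interval both sides of the origin automatically carry mass, so your main outermost-interval argument applies verbatim (the two outermost regions are still forced to be single intervals and are still distinct for $n\ge 2$), making the separate case unnecessary. Everything else in your write-up is sound: the strictness of the perimeter decrease in the Mass Stealing Lemma justifies the contradiction with isoperimetry, the computation $c^2=R_n^2-R_{n-1}^2<R_n^2$ correctly rules out all mass lying on one side, and your observation that the statement should be read with $n\ge 2$ matches the paper's own restriction to $n>2$ in the surrounding remark.
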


\section{Sliding masses and the first variation formula}

Up until now, we have reconfigured our regions by moving masses around in discrete steps and as complete intervals. In this section, we introduce the concept of continuously varying configurations via the first variation formula. We will formulate this for an arbitrary density function $f$. Consider an interval of $[a,x]$ with one fixed endpoint. We have already seen how the weighted mass $M$ and perimeter $P$ are measured as $\int_a^x f$ and $f(a) + f(x)$, respectively. An immediate consequence of this is that

\begin{align}
    \frac{dM}{dx} &= f(x)\\
    \frac{dP}{dx} &= f'(x).
\end{align}
Next, suppose that we let $x = x(t)$ vary as a function of time, and with a specified velocity $x'(t)$. This will allow us to calculate 

\begin{align}
    \frac{dM}{dt} &= f(x(t)) x'(t)\\
    \frac{dP}{dt} &= f'(x(t)) x'(t).
\end{align}
If we choose our speed to equal $1/f(x(t))$, these become

\begin{align}
    \frac{dM}{dt} &= 1\\
    \frac{dP}{dt} &= f'(x(t)) / f(x(t)) = \frac{d}{dx}\left[ \log(f(x(t))) \right].
\end{align}
We record this observation as follows:

\begin{proposition}[First Variation Formula]
\label{prop:FirstVariationFormula}
On $\mathbb{R}$ with density $f$: Suppose we take an interval $[a,b]$, and drag \emph{both} endpoints to the right at speed $1/f(x)$. Then the weighted mass of the interval will not change, while the perimeter's instantaneous rate of change will be $(\log(f))'(a) + (\log(f))'(b)$
\end{proposition}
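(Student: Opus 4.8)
The plan is to treat the moving interval as $[a(t), b(t)]$, where both endpoints are functions of time carrying the prescribed velocities $a'(t) = 1/f(a(t))$ and $b'(t) = 1/f(b(t))$, and then to differentiate the mass and perimeter directly in $t$. All of the building blocks are already in hand from the discussion preceding the statement; the only genuinely new feature is that both endpoints now move simultaneously, so I must account for the contribution of each and, in particular, track the sign attached to the left endpoint.

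First I would handle the mass. Writing $M(t) = \int_{a(t)}^{b(t)} f$ and applying the Leibniz integral rule gives
\begin{align}
\frac{dM}{dt} = f(b(t))\, b'(t) - f(a(t))\, a'(t).
\end{align}
Substituting the prescribed speeds collapses each product to $1$, so the two terms cancel and $dM/dt = 0$, which is exactly the claim that the weighted mass is invariant. The point to be careful about is the sign: dragging the \emph{left} endpoint to the right \emph{removes} mass, which is why its term enters with a minus sign and precisely balances the mass added at the right endpoint.

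Next I would turn to the perimeter. Since $P(t) = f(a(t)) + f(b(t))$, the chain rule yields
\begin{align}
\frac{dP}{dt} = f'(a(t))\, a'(t) + f'(b(t))\, b'(t) = \frac{f'(a(t))}{f(a(t))} + \frac{f'(b(t))}{f(b(t))},
\end{align}
and recognizing each quotient as the logarithmic derivative $(\log f)' = f'/f$ produces the asserted instantaneous rate of change $(\log f)'(a) + (\log f)'(b)$.

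There is no serious obstacle here: the computation is a direct application of the Leibniz rule and the chain rule, both of which specialize to the single-endpoint formulas $dM/dx = f(x)$ and $dP/dx = f'(x)$ already recorded. The one conceptual point worth emphasizing is that the two mass contributions cancel \emph{precisely because} the common speed was taken to be $1/f$ at each endpoint; any other choice of velocity would fail to keep $M$ constant, which is what singles out this particular drag as the natural mass-preserving variation and justifies using it throughout the subsequent analysis.
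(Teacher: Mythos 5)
Your proposal is correct and follows essentially the same route as the paper, which derives the single-endpoint rates $dM/dx = f(x)$ and $dP/dx = f'(x)$ and then records the proposition as the immediate two-endpoint consequence; you have simply written out the Leibniz-rule cancellation and chain-rule step that the paper leaves implicit, including the correct sign on the left endpoint's mass contribution.
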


\begin{corollary}
On $\mathbb{R}$ with density $f$: Suppose we have an isoperimetric $n$-bubble, comprised of a number of intervals with all endpoints of all intervals listed as $x_1, x_2, \dots, x_m$. Furthermore, assume $\log(f)$ is differentiable at each of the $x_i$. Then
\begin{equation}
\sum_{i=1}^m \log(f)'(x_i) = 0.
\end{equation}
\end{corollary}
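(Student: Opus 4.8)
The plan is to apply the First Variation Formula (Proposition \ref{prop:FirstVariationFormula}) simultaneously to every interval in the configuration, and then to use minimality to force the first derivative of total perimeter to vanish. Concretely, I would define a single one-parameter flow $\phi_t$ on a neighborhood of the endpoints by the ODE $\frac{d}{dt}\phi_t(x) = 1/f(\phi_t(x))$ with $\phi_0 = \mathrm{id}$. Because $\log f$ is differentiable — and in particular $f$ is positive — at each $x_i$, this flow is well-defined and smooth for $t$ in a neighborhood of $0$ (both signs), and for $|t|$ small the images $\phi_t(x_1),\dots,\phi_t(x_m)$ stay distinct and in the same order. Thus the endpoints $x_i(t) := \phi_t(x_i)$ describe a genuine competing configuration of $n$ regions for every small $t$.

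First I would check admissibility. Every interval $[a,b]$ of the configuration has both of its endpoints transported by the same flow $\phi_t$, each moving to the right with speed $1/f(x(t))$; so by the First Variation Formula the weighted mass of each interval is constant in $t$. Summing over the intervals comprising a given region, each region's mass $M_i$ is preserved, so the varied configuration is an admissible competitor for the prescribed masses $M_1,\dots,M_n$ throughout the variation.

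Next I would differentiate the total perimeter. Writing the distinct endpoints as $x_1(t),\dots,x_m(t)$ (each shared endpoint counted once, as in the definition of total perimeter), we have $P(t) = \sum_{i=1}^m f(x_i(t))$, and the chain rule together with $x_i'(t) = 1/f(x_i(t))$ gives
$$P'(t) = \sum_{i=1}^m f'(x_i(t)) \cdot \frac{1}{f(x_i(t))} = \sum_{i=1}^m \log(f)'(x_i(t)).$$
Evaluating at $t=0$ yields $P'(0) = \sum_{i=1}^m \log(f)'(x_i)$.

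Finally, since the original configuration is isoperimetric, $P(t)$ attains a minimum at $t=0$ over this admissible family; as the family is defined for $t$ on both sides of $0$, the minimum is interior and hence $P'(0)=0$, which is exactly the claimed identity. The main point to be careful about — the ``obstacle,'' though a mild one here — is precisely this two-sidedness: I must confirm the variation is defined for negative as well as positive $t$ (guaranteed by the positivity of $f$ near each $x_i$) so that minimality yields the equality $P'(0)=0$ rather than a one-sided inequality. I should also remark that the differentiability hypothesis on $\log f$ at each $x_i$ is exactly what is needed, both to run the flow and to differentiate $P$; in particular it excludes endpoints sitting at the origin, where $\log|x|$ fails to be differentiable.
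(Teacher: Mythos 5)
Your proposal is correct and follows essentially the same argument as the paper: drag every endpoint to the right at speed $1/f$, observe that the First Variation Formula keeps all masses fixed, and conclude from two-sided minimality that the perimeter's derivative $\sum_i \log(f)'(x_i)$ must vanish. You simply make explicit some details the paper leaves implicit (well-definedness of the flow near each $x_i$ and the two-sidedness of the variation), which is a welcome but not substantively different elaboration.
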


\begin{proof} If our region is isoperimetric, it has minimal perimeter out of all other configurations that have regions with the same prescribed masses. This means, as we vary our configuration by dragging every endpoint to the right at speed $1/f$, our masses will stay the same and our perimeter must have been at a local/global minimum. Thus, its instantaneous rate of change should be 0.
\end{proof}

We will use the first variation formula to show that configurations of intervals that exhibit certain patterns cannot be isoperimertic. We begin by defining the patterns in question. 

\begin{definition}
Suppose we have a condensed configuration of $n$ regions, and suppose that there are  two regions (call them $A$ and $B$) that are each composed of two intervals. Call these intervals $A^-$, $A^+$, $B^-$, and $B^+$  depending on their relative positions (so that $A^-$ and $B^-$ will sit to the left of $A^+$ and $B^+$, respectively). Suppose, up to renaming, that $A^-$ falls to the left of $B^-$. Then there are three possibilities for the relative ordering of these intervals:

    \begin{itemize}
        \item If their relative positions, ordered left to right, are $A^-$, $B^-$, $A^+$, $B^+$, then we say $A$, $B$ are \textbf{alternating}.
        \item If their relative ordering is $A^-$, $B^-$, $B^+$, $A^+$, then we say $A$, $B$ are \textbf{nested}.
        \item If their relative ordering is $A^-, A^+$, $B^-$, $B^+$, then we say that $A$, $B$ are \textbf{ordered}.
    \end{itemize}
\end{definition}

Our earlier work (specifically Corollary \ref{Cor:CondenseToTheOrigin}) guarantees that an ordered $A$ and $B$ will not occur in an isoperimetric configuration, since each side of the origin has at most one interval from each region. Our next lemma will guarantee that an isoperimetric region cannot have an alternating pattern either: if such a configuration exists, we can use the first variation formula to create a continuous movement that preserves the masses of our $n$ regions while reducing total perimeter. The core idea present -- simultaneous siphoning of area from each of the inner intervals in the alternating pattern -- was used in \cite{ChambersBongiovanniETAL18} to reduce the number of possible intervals in a 2-bubble.

\begin{proposition}[Simultaneous Mass Siphoning]
\label{Prop:SimultaneousSiphoning}
On $\mathbb{R}$ with density $|x|$: Suppose there exists a condensed configuration of $n$ regions. Furthermore, suppose there are two specific regions (identified as regions $A$ and $B$) that are of an alternating pattern (with ordering $A^-, B^-, A^+, B^+$). Then we can create a new configuration with lower perimeter by eliminating one of the interior intervals ($B^-$ or $A^+$) using a ``simultaneous siphoning'' process.
\end{proposition}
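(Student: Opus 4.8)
The plan is to exhibit an explicit mass-preserving flow — the ``simultaneous siphon'' — along which total perimeter strictly decreases, and to run it until one of the two interior intervals $B^-$ or $A^+$ is exhausted. First I would fix coordinates. Since $A$ and $B$ each contribute one interval to each side of the origin, the alternating order $A^-, B^-, A^+, B^+$ forces $A^-$ and $B^-$ onto the negative axis (with $B^-$ nearer the origin) and $A^+, B^+$ onto the positive axis (with $A^+$ nearer the origin); thus the interior intervals $B^-, A^+$ are exactly the two intervals of the pattern adjacent to the origin, while $A^-$ and $B^+$ are the outer intervals of regions $A$ and $B$. Crucially, between $A^-$ and $B^-$ (and between $A^+$ and $B^+$) there may be intervals belonging to \emph{other} regions, and accommodating these without disturbing their masses is where care is needed.

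The siphon itself is easy to describe. Hold fixed the four extreme endpoints of the pattern — the outer end of $A^-$, the origin-side end of $B^-$, the origin-side end of $A^+$, and the outer end of $B^+$ — and drag every \emph{other} endpoint lying between them toward the origin at speed $1/f$ (so negative-side endpoints move right and positive-side endpoints move left, all toward $0$). By the First Variation Formula (Proposition \ref{prop:FirstVariationFormula}), each interposed interval has both of its endpoints moving toward the origin at speed $1/f$, so its mass is unchanged; meanwhile $A^-$ grows and $B^-$ shrinks, each at unit mass-rate since the relevant endpoint moves at speed $1/f$, and symmetrically $B^+$ grows and $A^+$ shrinks at unit rate. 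Hence every region's mass is preserved: $A$ gains from $A^-$ exactly what it loses from $A^+$, $B$ gains from $B^+$ exactly what it loses from $B^-$, and all other regions' intervals merely slide.

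The perimeter computation is then immediate. With $f = |x|$ we have $(\log f)'(x) = 1/x$, and every moving endpoint travels toward the origin, so $|x_i|$ strictly decreases along the flow. Therefore
\begin{equation}
\frac{dP}{dt} = -\sum_{\text{moving } x_i} \frac{1}{|x_i|} < 0,
\end{equation}
and total perimeter strictly decreases. Since $B^-$ and $A^+$ each lose mass at unit rate, at some finite time one of them reaches zero mass and disappears; stopping there yields a configuration with the same region masses, one fewer interior interval, and strictly smaller perimeter, as claimed.

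The main obstacle is not the final inequality — which is transparent once $(\log|x|)' = 1/x$ is in hand — but arranging the flow so that it preserves the mass of \emph{every} region simultaneously. The naive move of shifting a single shared boundary works immediately when $A^-$ abuts $B^-$ and $A^+$ abuts $B^+$, but in general interposed intervals of other regions would have their masses altered by such a shift; the resolution is precisely the speed-$1/f$ sliding above, which lets an arbitrary block of interposed intervals translate toward the origin while each preserves its own mass, so that only the four designated intervals change size. I would also verify that the degenerate configurations (no interposed intervals, or an interior interval already abutting the origin) reduce to this same picture, and confirm that the flow can be run up to the first time an interior interval vanishes without some other interval degenerating first.
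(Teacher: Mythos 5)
Your proposal is correct and follows essentially the same argument as the paper: the same speed-$1/f$ siphoning flow, the same first-variation bookkeeping showing every region's mass is preserved while $dP/dt = -\sum 1/|x_i| < 0$, run until $B^-$ or $A^+$ vanishes. The one clarification is that any endpoints lying between the origin-side ends of $B^-$ and $A^+$ (i.e.\ of intervals sitting between those two and the origin) must be held fixed rather than dragged, which is what your mass accounting implicitly assumes and exactly what the paper does.
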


\begin{proof}
Take the right endpoint of $A^-$, the left endpoint of $B^-$, and all interval endpoints between these two. We drag these endpoints to the right at speed $1/|x|$. Simultaneously, we take the right endpoint of $A^+$, the left endpoint of $B^+$, and all interval endpoints between these two, and drag these endpoints to the left at speed $1/|x|$. This process is illustrated in Figure \ref{Figure6Alternating}. According to Proposition \ref{prop:FirstVariationFormula}, this process does not change the weighted masses of any region for which both endpoints are moving. Additionally, the masses for $A$ and $B$ are not changing: the mass siphoned into $A^-$ (and $B^+$) is identical to the mass siphoned away from $A^+$ (and $B^-$). Because our density function is $|x|$ and our endpoints are all moving towards the origin, this variation reduces total weighted perimeter. Since no moving endpoint is in danger of reaching the origin, this process can continue until either $B^-$ or $A^+$ has shrunk entirely to size 0. We see that perimeter was made to decrease throughout this process, and continued to decrease until one of the central intervals in the alternating pattern completely disappears.
\end{proof}

%
%

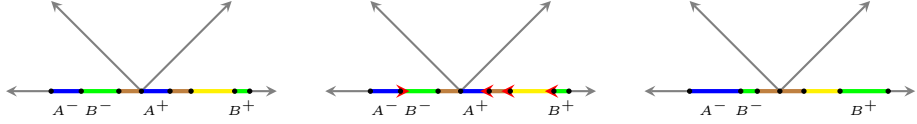
\begin{figure}[H]
    \centering
    \begin{subfigure}[h]{0.3\textwidth}    
    \begin{tikzpicture}[scale=0.4]
    \draw[gray, stealth-,thick](-3,3) -- (0,0);
    \draw[gray, -stealth,thick](0,0) -- (3,3);
    \draw[gray, stealth-stealth,thick](-4.5,0) -- (4.5,0); 
    \filldraw[green,ultra thick] (-.75,0) -- (-2,0);
    \filldraw[green,ultra thick] (3.1,0) -- (3.6,0);

	\node[anchor=north] at (-1.4,0) {\tiny $B^-$};
	\node[anchor=north] at (3.35,0) {\tiny $B^+$};
	
    \filldraw[blue, ultra thick] (-3,0) -- (-2, 0);
    \filldraw[blue,ultra thick] (0,0) -- (.95,0);
    
	\node[anchor=north] at (-2.5,0) {\tiny $A^-$};
	\node[anchor=north] at (0.5,0) {\tiny $A^+$};
    
    \filldraw[brown,ultra thick] (0,0) -- (-.75,0);
    \filldraw[brown,ultra thick] (.95,0) -- (1.65,0);
    \filldraw[yellow,ultra thick] (1.65,0) -- (3.1,0);
    
    \filldraw (-.75,0) circle (2pt);
    \filldraw (-2,0) circle (2pt);
    \filldraw (-3,0) circle (2pt);
    \filldraw (0,0) circle (2pt);
    \filldraw (0.95,0) circle (2pt);
    \filldraw (1.65,0) circle (2pt);
    \filldraw (3.1,0) circle (2pt);
    \filldraw (3.6,0) circle (2pt);

    \end{tikzpicture}

	\end{subfigure}
 	\hfill
    \begin{subfigure}[h]{0.3\textwidth}    
    \begin{tikzpicture}[scale=0.4]
    \draw[gray, stealth-,thick](-3,3) -- (0,0);
    \draw[gray, -stealth,thick](0,0) -- (3,3);
    \draw[gray, stealth-stealth,thick](-4.5,0) -- (4.5,0); 
    \filldraw[green,ultra thick] (-.75,0) -- (-2,0);
    \filldraw[green,ultra thick] (3.1,0) -- (3.6,0);
    \filldraw[blue, ultra thick] (-3,0) -- (-2, 0);
    \filldraw[blue,ultra thick] (0,0) -- (.95,0);
    \filldraw[brown,ultra thick] (0,0) -- (-.75,0);
    \filldraw[brown,ultra thick] (.95,0) -- (1.65,0);
    \filldraw[yellow,ultra thick] (1.65,0) -- (3.1,0);

	\node[anchor=north] at (-2.5,0) {\tiny $A^-$};
	\node[anchor=north] at (0.5,0) {\tiny $A^+$};

	\node[anchor=north] at (-1.4,0) {\tiny $B^-$};
	\node[anchor=north] at (3.35,0) {\tiny $B^+$};    
    
    \draw[red, -stealth,very thick] (-2,0) -- (-1.7,0);
    \draw[red, stealth-,very thick] (2.8,0) -- (3.1,0);
    \draw[red, stealth-,very thick] (1.35, 0) -- (1.65,0);
    \draw[red, stealth-,very thick] (0.65, 0) -- (0.95,0);
    
    \filldraw (-.75,0) circle (2pt);
    \filldraw (-2,0) circle (2pt);
    \filldraw (-3,0) circle (2pt);
    \filldraw (0,0) circle (2pt);
    \filldraw (0.95,0) circle (2pt);
    \filldraw (1.65,0) circle (2pt);
    \filldraw (3.1,0) circle (2pt);
    \filldraw (3.6,0) circle (2pt);

    \end{tikzpicture}
	\end{subfigure}
 	\hfill
    \begin{subfigure}[h]{0.3\textwidth}
    \begin{tikzpicture}[scale=0.4]
    \draw[gray, stealth-,thick](-3,3) -- (0,0);
    \draw[gray, -stealth,thick](0,0) -- (3,3);
    \draw[gray, stealth-stealth,thick](-4.5,0) -- (4.5,0); 
    \filldraw[green,ultra thick] (-.75,0) -- (-1.3,0);
    \filldraw[green,ultra thick] (2.0,0) -- (3.6,0);
    \filldraw[blue, ultra thick] (-3,0) -- (-1.3, 0);
    \filldraw[brown,ultra thick] (0,0) -- (-.75,0);
    \filldraw[brown,ultra thick] (0,0) -- (0.8,0);
    \filldraw[yellow,ultra thick] (0.8,0) -- (2.0,0);

	\node[anchor=north] at (-2.2,0) {\tiny $A^-$};

	\node[anchor=north] at (-1,0) {\tiny $B^-$};
	\node[anchor=north] at (2.8,0) {\tiny $B^+$};
    
    \filldraw (-.75,0) circle (2pt);
    \filldraw (-1.3,0) circle (2pt);
    \filldraw (-3,0) circle (2pt);
    \filldraw (0,0) circle (2pt);
    \filldraw (0.8,0) circle (2pt);
    \filldraw (2,0) circle (2pt);
    \filldraw (3.6,0) circle (2pt);

    \end{tikzpicture}
    \end{subfigure}
    \caption{Over these three images, we see the interior intervals $B^-$ and $A^+$ shrink while the exterior intervals grow. This is done in a manner that reduces total perimeter and leaves total mass of $A$ and $B$ unchanged. In the final image, $A^+$ has completely vanished. At this point, the sliding is done and the alternating pattern has been eliminated.}
    \label{Figure6Alternating}
\end{figure}

Next, using a similar argument as above, we show that two intervals cannot be nested \emph{directly} around the origin in an isoperimetric region.

\begin{proposition}
\label{Prop:InnermostSliding}
On $\mathbb{R}$ with density $|x|$: Suppose there exists a condensed configuration of $n$ regions (not necessarily isoperimetric). Furthermore, suppose two specific regions (identified as regions $A$ and $B$) are nested such that the origin sits in the interior of region $B$, and region $B$ is immediately surrounded by region $A$. Then we can lower perimeter by sliding region $B$ to the right.
\end{proposition}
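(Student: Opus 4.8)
The plan is to mimic the continuous-variation argument of Proposition \ref{Prop:SimultaneousSiphoning}, applying the First Variation Formula (Proposition \ref{prop:FirstVariationFormula}) to the two endpoints of $B$. Since the origin lies in the interior of $B$, write its single interval as $B = [-\beta, \gamma]$ with $\beta, \gamma > 0$, and write the two surrounding intervals of $A$ as $A^- = [-\alpha, -\beta]$ and $A^+ = [\gamma, \delta]$. The motion I would use is to drag \emph{both} endpoints of $B$, namely $-\beta$ and $\gamma$, to the right at speed $1/|x|$, leaving every other endpoint of the configuration fixed.

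First I would verify that this motion preserves all the prescribed masses. By Proposition \ref{prop:FirstVariationFormula}, dragging both endpoints of $B$ rightward at speed $1/|x|$ leaves the mass of $B$ unchanged. The key observation is that the mass of $A$ is preserved as well, \emph{without} moving the outer endpoints $-\alpha, \delta$: because $A$ immediately surrounds $B$, the moving endpoint $-\beta$ is the inner endpoint of $A^-$ and the moving endpoint $\gamma$ is the inner endpoint of $A^+$. As $-\beta$ slides right, $A^-$ grows at rate $|{-\beta}|\cdot(1/|{-\beta}|) = 1$; as $\gamma$ slides right, $A^+$ shrinks at the same rate $|\gamma|\cdot(1/|\gamma|) = 1$. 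These cancel, so $A$'s mass is constant. Since no other endpoint moves, every other region is untouched and the total configuration stays admissible throughout.

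Next I would compute the perimeter. Only the two endpoints $-\beta$ and $\gamma$ contribute a changing term, so by the First Variation Formula the instantaneous rate of change of total perimeter is $(\log|x|)'(-\beta) + (\log|x|)'(\gamma) = -\tfrac{1}{\beta} + \tfrac{1}{\gamma}$. Using the reflection symmetry of the density $|x|$, I would assume without loss of generality that the right-hand endpoint is at least as far from the origin as the left-hand one, i.e. $\gamma \geq \beta$; then this rate is $\le 0$, so sliding $B$ to the right does not increase perimeter. Moreover the conserved mass ties $\beta$ and $\gamma$ together through $\beta^2 + \gamma^2$, so the instant the slide begins $\beta$ strictly decreases and $\gamma$ strictly increases, forcing $\gamma > \beta$ and making the rate strictly negative; this also dispatches the borderline symmetric case $\beta = \gamma$, where the first-order rate momentarily vanishes. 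I would continue the slide until $\beta \to 0$ (the left endpoint of $B$ reaches the origin) or until $A^+$ degenerates, in either case arriving at a strictly lower perimeter.

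The main obstacle I anticipate is the bookkeeping in the second paragraph: one must confirm that the rightward motion of the two shared endpoints simultaneously fixes the mass of $B$ and of $A$ while disturbing no other region, so that no compensating motion of the outer endpoints $-\alpha, \delta$ is required. The only other delicate point is pinning down the correct direction of the slide together with the first-order-degenerate symmetric case $\beta = \gamma$; both are handled by the reflection symmetry of $|x|$ and by the remark that the conserved quantity $\beta^2 + \gamma^2$ pushes the configuration off the symmetric point the moment the motion starts.
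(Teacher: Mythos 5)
Your proposal is correct, and its core mechanism --- dragging the two shared endpoints of $B$ at speed $1/|x|$ so that the first variation formula preserves the mass of $B$ and of $A$ simultaneously while no other region is disturbed --- is exactly the paper's. Where you genuinely diverge is in how the perimeter drop is certified. The paper always slides to the right, makes no claim that perimeter decreases monotonically along the way, and instead compares the initial and final endpoint lists directly: if $A^+$ vanishes first, the only surviving moved endpoint satisfies $b' < b$; if $B^-$ vanishes first, the needed bound $c' \leq b + c$ is obtained by invoking the already-established 1-bubble result for the region $B$. You instead read off the sign of the first variation, $-1/\beta + 1/\gamma$, reflect so that $\gamma \geq \beta$, and note that the conserved mass $(\beta^2 + \gamma^2)/2$ pushes the configuration off the symmetric point immediately, making $P(t)$ strictly decreasing for all $t > 0$. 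This buys a uniform, monotone argument with no case split and no appeal to the 1-bubble theorem; the price is that your slide is toward the \emph{farther} endpoint of $B$, which after undoing the reflection may be to the left, so you prove a harmlessly reflected version of the literal claim ``slide $B$ to the right'' (indeed, when $\beta > \gamma$ the rightward slide initially \emph{increases} perimeter, which is precisely why the paper's endpoint comparison, rather than monotonicity, is what rescues the literal statement). Since the proposition is only ever used to eliminate one interval from a nested-at-the-origin configuration, this discrepancy has no effect downstream.
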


\begin{proof}
We will use the ideas present in the first variation formula here. Suppose our regions of interest are identified as $A^- = [-a, -b]$, $B = B^- \cup B^+ = [-b, 0] \cup [0, c]$, and $A^+ = [c,d]$. We focus on the perimeter contributed by $a$, $b$, $c$, and $d$, noting that there might be other interval endpoints in our configuration that correspond to regions other than $A$ and $B$. 

Slide the endpoints $-b$ and $c$ to the right with speed $1/|x|$. The weighted mass $A^-$ will grow at exactly the same rate for which $A^+$ shrinks, leaving the total mass of region A unchanged. Similarly, the weighted mass of region B remains unchanged. We continue this until one of two things happens:

    \begin{itemize}
        \item The $A^+$ region completely disappears, and the endpoint $c$ collides with the endpoint $d$. This will occur if, initially, $A^+$ has less weighted mass than $B^-$. The result is a new configuration with perimeter points of $-a$, $-b'$, $0$, and $d$ (note that $c$ has been eliminated since it collided with $d$). Since $b' < b$, this new configuration has less total perimeter.
        \item The $B^-$ interval completely disappears, and the endpoint $b$ collides with the origin. This will occur if the weighted mass of $A^+$ is initially larger than the weighted mass of $B^-$. The result is a new configuration with perimeter points of $-a$, $0$, $c'$, and $d$. Although the $c$ endpoint has grown, we can guarantee that $c'$ contributes less total perimeter than $c+b$ did, as this is equivalent to the applying the 1-bubble result of \cite{HuangMorgan19} to the region $B$. Thus, this new configuration has less total perimeter.
    \end{itemize}
Regardless, we see that total perimeter has been reduced while maintaining the same weighted masses in $A$ and $B$, completing the proof.
\end{proof}

Our next proposition also works to adjust the position of the origin. Unlike our previous proposition, this does not require a siphoning argument, but rather allows for all interval endpoints to shift simultaneously. Note that this argument is specific for our $|x|$ density function.

\begin{proposition}[The position of the the origin]
\label{prop:LocatingTheOriginAtAnEndpoint}Suppose we have $n$ regions in a condensed configuration on $\mathbb{R}$ with density $|x|$, and suppose that the origin lands in the interior of one interval. Then, by sliding all intervals and regions to either the left or the right and adjusting the endpoints appropriately, we can lower perimeter and position the origin at an endpoint between two intervals.  
\end{proposition}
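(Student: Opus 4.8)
The plan is to realize the repositioning as a single first-variation motion applied to \emph{all} endpoints at once. Label the endpoints of the condensed configuration as
\[
-\ell_L < \cdots < -\ell_1 < 0 < r_1 < \cdots < r_R ,
\]
so that the origin lies in the interior of the interval $[-\ell_1,r_1]$ and the total perimeter is $P=\sum_{j=1}^L \ell_j+\sum_{j=1}^R r_j$. I would drag every endpoint to the right at speed $1/|x|$ (the alternative being to drag every endpoint to the left). Because both endpoints of each interval then move in the same direction at speed $1/|x|$, Proposition \ref{prop:FirstVariationFormula} shows that the weighted mass of each interval, and hence of each region, is preserved throughout the motion. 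A point starting at $x_0\neq 0$ and moving right at speed $1/|x|$ satisfies $x(t)^2=x_0^2\pm 2t$, so the inner left endpoint $-\ell_1$ reaches the origin at the finite time $t=\ell_1^2/2$, strictly before any other left endpoint $-\ell_j$ (which arrives only at $\ell_j^2/2>\ell_1^2/2$) and while the right endpoints merely recede; no interior collisions occur. At that instant the origin has become an endpoint shared by two intervals (the images of $[-\ell_2,-\ell_1]$ and $[-\ell_1,r_1]$).

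To pin down the direction that lowers perimeter, I would invoke the corollary to the First Variation Formula, which gives the instantaneous rate of change of total perimeter as $\sum_i (\log|x|)'(x_i)=\sum_i 1/x_i$, the sum running over all current endpoints. For the rightward motion this is
\[
\frac{dP}{dt}=\sum_{j=1}^R \frac{1}{r_j}-\sum_{j=1}^L \frac{1}{\ell_j},
\]
and for the leftward motion it is the negative of this quantity. Hence at least one of the two directions has $\tfrac{dP}{dt}\le 0$ at the initial instant, and I would choose that one (say rightward, so that $\sum_R 1/r_j\le \sum_L 1/\ell_j$).

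The key point — and the step I expect to be the main obstacle — is showing that the perimeter keeps decreasing for the \emph{entire} motion, not merely at the start, since $\tfrac{dP}{dt}$ is not constant. This follows from monotonicity: as the motion proceeds each left endpoint $-\ell_j$ moves toward the origin (so $\ell_j$ shrinks and its contribution $-1/\ell_j$ strictly decreases), while each right endpoint $r_j$ recedes (so $1/r_j$ strictly decreases as well). Thus every summand of $\tfrac{dP}{dt}$ is strictly decreasing in $t$, so $\tfrac{dP}{dt}$ itself is strictly decreasing; being $\le 0$ at $t=0$, it is strictly negative on $(0,\ell_1^2/2]$ — indeed $\tfrac{dP}{dt}\to-\infty$ as $-\ell_1\to 0$. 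Integrating, the total perimeter strictly decreases over the motion while all masses are preserved, and the origin is relocated to an endpoint. In the degenerate case where $[-\ell_1,r_1]$ is the extreme interval of the configuration on the side toward which the perimeter-decreasing slide moves it, the origin ends at the boundary of the configuration rather than strictly between two intervals; this is still a genuine endpoint and causes no difficulty. In all other cases the origin ends between two adjacent intervals, as claimed.
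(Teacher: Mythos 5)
Your proposal is correct and follows essentially the same route as the paper: drag every endpoint at speed $1/|x|$ so masses are preserved, pick the direction in which $dP/dt \le 0$ initially, and observe that $dP/dt$ only decreases thereafter (your term-by-term monotonicity of $\sum 1/x_i$ is exactly the paper's observation that $d^2P/dt^2 = \sum -1/|x|^3 < 0$). The explicit trajectories $x(t)^2 = x_0^2 \pm 2t$ and the collision-time bookkeeping are a nice extra level of rigor but do not change the argument.
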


\begin{proof}
Given a condensed configuration of regions, suppose the origin sits in the interior of a region $A$. By the nature of condensed configurations, this means $A$ can be split into two intervals directly adjacent to the origin, $A = A^- \cup A^+ = [-a_1, 0] \cup [0, a_2]$. Denote all other endpoints of other regions as $x_i$'s. 
We proceed by taking all endpoints (except the origin) and sliding them to the right at speed $1/|x|$. According to the first variation formula for enclosed mass, this results in movement that keeps the weighted mass of each region constant (with $A$'s mass staying constant because $A^-$ loses mass at the same rate $A^+$ gains mass). By the first variation formula for perimeter, we know the total perimeter changes at rates of

\begin{align}
\frac{dP}{dt} &= \sum f'(x(t)) / f(x(t))\\
\frac{d^2P}{dt^2} &= \sum \frac{f(x(t)) f''(x(t)) - \left[f'(x(t))\right]^2}{\left[f(x(t))\right]^3}
\end{align}
where $f$ is the density function, and the  the summation is taken over all perimeter points that are moving. When $f(x) = |x|$, this simplifies to
\begin{align}
\frac{dP}{dt} &= \sum 1/x \\
\frac{d^2P}{dt^2} &= \sum  -1/|x|^3.
\end{align}
Notably, we can observe that the second derivative of perimeter is negative for $f(x) = |x|$. So the perimeter function $P$, when dragging every perimeter point to the right, is concave down. There are therefore two possibilities:
	\begin{enumerate}
	\item If perimeter is decreasing (so $P' < 0$), it will continue to decrease. We can therefore continue in this manner until the endpoint $a_1$ has been dragged to collide with the origin.
	\item If perimeter is increasing, we instead drag every one of our targeted points to the \emph{left} with speed $1/|x|$. This will keep the masses the same, will change the sign of $P'$, and will leave $P''$ negative. The result is that perimeter will decrease, and will continue to decrease until the endpoint $a_2$ collides with the origin.
	\end{enumerate}

Thus, we see that we can drag endpoints in a manner that decreases perimeter until either $A^-$ or $A^+$ disappears. Either way, we are left with the entirety of region $A$ on one side of the origin.
\end{proof}

At this point, we are ready to prove the triple bubble theorem.

\begin{theorem}[3-Bubble Theorem]
\label{Thm:TripleBubble}
On $\mathbb{R}$ with density $|x|$: The $n$-bubble solution to the isoperimetric problem for $n=3$ is a condensed configuration of three intervals. If the three regions have masses $M_1 \leq M_2 \leq M_3$, then the regions of mass $M_1$ and $M_2$ are adjacent to the origin, with $M_3$ positioned to be adjacent to $M_1$.
\end{theorem}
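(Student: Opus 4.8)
The plan is to take an isoperimetric $3$-bubble, push it into a canonical three-interval form using the machinery of the previous sections, and then finish with an explicit perimeter comparison among the finitely many surviving arrangements. By the corollary following Proposition~\ref{prop:OuterMassStealing}, an isoperimetric $3$-bubble is condensed with at most $2n-2=4$ adjacent intervals, each region contributing at most one interval per side of the origin. First I would rule out the genuine four-interval case: with three regions and four intervals, exactly one region $B$ is split across the origin while the other two, $A$ and $C$, are single intervals. Applying Proposition~\ref{prop:OuterMassStealing} to each side shows that in an optimal configuration neither outermost interval can belong to the split region (otherwise its mass could be collected onto one side, strictly lowering perimeter), so $B$'s two intervals must be the innermost ones on each side; that is, $B$ straddles the origin with $B^-=[\,\cdot\,,0]$ and $B^+=[0,\,\cdot\,]$. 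But then the origin lies in the interior of region $B$, and Proposition~\ref{prop:LocatingTheOriginAtAnEndpoint} strictly lowers perimeter, contradicting optimality. Hence an optimal $3$-bubble has exactly three intervals, one per region.

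Next I would fix the position of the origin. If the origin were interior to one of the three intervals, Proposition~\ref{prop:LocatingTheOriginAtAnEndpoint} would again strictly reduce perimeter, so in the optimal configuration the origin sits at an endpoint. Two shapes remain: either the origin is an interior shared endpoint, leaving one region alone on one side and two regions (inner and outer) on the other; or the origin is the extreme endpoint with all three intervals on one side. In either shape, Proposition~\ref{prop:TranspositionLemma} forces the region nearer the origin to carry the smaller mass whenever two intervals sit on the same side. Writing the solo region's mass as $m_s$ and the inner/outer masses on the two-interval side as $m_i\le m_o$, and using that a mass-$m$ interval abutting the origin reaches out to $\sqrt{2m}$ under density $|x|$, the total perimeter of the first shape is $\sqrt{2}\bigl(\sqrt{m_s}+\sqrt{m_i}+\sqrt{m_i+m_o}\bigr)$, since the endpoint at the origin contributes $|0|=0$.

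The assignment of $M_1\le M_2\le M_3$ to the slots $(m_s,m_i,m_o)$ is then a finite optimization. Taking the solo region equal to $M_1$, $M_2$, or $M_3$ in turn (with the inner slot always the smaller of the two remaining masses), the three candidate perimeters are, up to the factor $\sqrt{2}$, the quantities $\sqrt{M_1}+\sqrt{M_2}+\sqrt{M_2+M_3}$, then $\sqrt{M_1}+\sqrt{M_2}+\sqrt{M_1+M_3}$, then $\sqrt{M_1}+\sqrt{M_3}+\sqrt{M_1+M_2}$. The middle one is exactly the claimed configuration (solo $M_2$, inner $M_1$, outer $M_3$, so $M_1$ and $M_2$ touch the origin and $M_3$ is adjacent to $M_1$). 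I would then verify it is smallest: it beats the first because $\sqrt{M_1+M_3}\le\sqrt{M_2+M_3}$ (as $M_1\le M_2$), and it beats the third because $g(t)=\sqrt{t}-\sqrt{t+M_1}$ is increasing, giving $\sqrt{M_2}-\sqrt{M_1+M_2}\le\sqrt{M_3}-\sqrt{M_1+M_3}$. Finally the all-on-one-side shape has perimeter $\sqrt{2}\bigl(\sqrt{M_1}+\sqrt{M_1+M_2}+\sqrt{M_1+M_2+M_3}\bigr)$, which dominates the claimed value term-by-term, so it is never optimal.

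The main obstacle is the bookkeeping in the reduction step, not the closing inequalities: one must check that mass stealing followed by locating the origin genuinely eliminates every four-interval possibility, and that each invoked move strictly decreases perimeter so that optimality is actually contradicted rather than merely matched. Once the configuration is pinned to three intervals with the origin at a shared endpoint, the remaining work is the elementary square-root comparison above, and the symmetry of $|x|$ lets me assume without loss of generality that the solo region lies on the negative side.
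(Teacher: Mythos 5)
Your argument is correct and follows essentially the same route as the paper: reduce to a condensed configuration via the mass-stealing corollary, eliminate the split-region/four-interval case with Propositions~\ref{prop:OuterMassStealing} and~\ref{prop:LocatingTheOriginAtAnEndpoint}, order each side with Proposition~\ref{prop:TranspositionLemma}, and finish by comparing the remaining candidates. The only difference is that you write out explicitly the square-root comparison (and the all-on-one-side arrangement) that the paper dismisses as ``a direct computation,'' and your computation checks out.
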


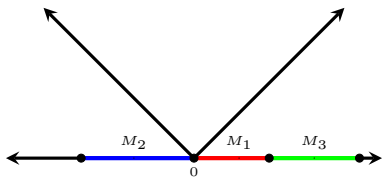
\begin{figure}[H]
     \centering
         \centering
         \begin{tikzpicture}
    \draw[stealth-,very thick] (-2,2) -- (0,0);
    \draw[-stealth,very thick](0,0) -- (2,2);
    \draw[stealth-stealth,very thick] (-2.5,0) -- (2.5,0); 
    \filldraw[red,ultra thick] (0,0) -- (1,0);
    \filldraw[blue, ultra thick] (0,0) -- (-1.5,0);
    \filldraw[green, ultra thick] (1,0) -- (2.2, 0);

    \filldraw (0,0) circle (1.5pt) node[anchor=north] {\tiny$0$};
    \filldraw (1,0) circle (1.5pt);
    \filldraw (-1.5,0) circle (1.5pt);
    \filldraw (2.2,0) circle (1.5pt);
    
    \filldraw (.6,0) circle (.05pt) node[anchor=south] {\tiny$M_1$};
    \filldraw (-0.8,0) circle (.05pt) node[anchor=south] {\tiny$M_2$};
    \filldraw (1.6,0) circle (.05pt) node[anchor=south] {\tiny$M_3$};
    \end{tikzpicture}
      \caption{Solution to the triple-bubble problem with density $|x|$}
\end{figure}

\begin{proof}
By condensing our configuration and applying Proposition \ref{prop:OuterMassStealing} twice, we know our configuration has exactly three adjacent intervals. By applying Proposition \ref{prop:LocatingTheOriginAtAnEndpoint}, we know the middle interval cannot contain the origin in its interior. Therefore, we have exactly three intervals, with the origin acting as an endpoint shared by two of them. A direct computation tells us the correct ordering is with $M_1$ and $M_3$ on the same side of the origin, with $M_2$ on the opposite side. Proposition \ref{prop:TranspositionLemma} confirms the ordering of $M_1$ and $M_3$. This completes the proof.
\end{proof}

\section{Proof of the 4-bubble theorem}

In this section, we identify the isoperimetric configuration of 4 distinct regions on the real number line with density $|x|$. From our earlier work, we know that that such a configuration is condensed into  at most 6 intervals (first by condensing into 8 intervals, and then by ``mass stealing'' to condense the outer interval on each side of the origin). We will reduce further in the final proof, but we begin by identifying a few results that will aid us down the stretch. We start with a ``4-bubble lemma'' which tells us isoperimetric 4-bubbles with exactly 4 intervals must have two intervals on each side of the origin.

\begin{theorem}[4-Bubble Lemma]
On $\mathbb{R}$ with density $|x|$: If a configuration of 4 regions is isoperimetric, and if each region consists of a single interval, then there must be exactly two intervals on each side of the origin.
\end{theorem}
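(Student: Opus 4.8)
The plan is to enumerate the possible ways the four single intervals can be distributed across the origin and rule out every distribution except the balanced one. First I would invoke Proposition~\ref{prop:LocatingTheOriginAtAnEndpoint}: in an isoperimetric configuration the origin cannot lie in the interior of an interval, so it must sit at an endpoint. Hence the four intervals split as $(4,0)$, $(3,1)$, $(2,2)$, $(1,3)$, or $(0,4)$ across the origin, and by the symmetry of $|x|$ it suffices to exclude $(4,0)$ and $(3,1)$. By the Transposition Lemma (Proposition~\ref{prop:TranspositionLemma}) I may assume that on each side the masses increase as we move away from the origin. The key simplification is that with density $|x|$ an interval adjacent to the origin enclosing mass $M$ has its far endpoint at $\pm\sqrt{2M}$; more generally, the endpoint lying at cumulative enclosed mass $S$ (measured from the origin) sits at distance $\sqrt{2S}$, and since the origin itself contributes zero density, the total perimeter is simply the sum of $\sqrt{2S_k}$ over all endpoints, where the $S_k$ run over the partial sums of masses on each side.

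The strategy for each forbidden distribution is to move a single interval across the origin, compute the change in perimeter using this square-root formula, and show it is strictly negative, contradicting minimality. For $(4,0)$ with masses $m_1\le m_2\le m_3\le m_4$ from the origin outward, moving the outermost interval to the (empty) opposite side replaces the endpoint at cumulative mass $m_1+m_2+m_3+m_4$ with an endpoint at cumulative mass $m_4$, leaving every other endpoint unchanged; the perimeter drops by $\sqrt{2(m_1+m_2+m_3+m_4)}-\sqrt{2m_4}>0$. This disposes of $(4,0)$ (and $(0,4)$) immediately.

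The main obstacle is the $(3,1)$ case, because no single choice of which interval to relocate works for all mass distributions. Write the three masses on the crowded side as $a\le b\le c$ (inner to outer) and the lone mass as $d$. I would split into two cases according to how $d$ compares with $b$. If $d\ge b$, I move the innermost interval $a$ across to join $d$; the resulting change in perimeter (up to the factor $\sqrt2$) telescopes to $\bigl(\sqrt{a+b}-\sqrt b\bigr)-\bigl(\sqrt{a+d}-\sqrt d\bigr)+\bigl(\sqrt{a+b+c}-\sqrt{b+c}\bigr)$, which is positive because $t\mapsto\sqrt{a+t}-\sqrt t$ is decreasing and $b\le d$. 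If instead $d<b$, then $d<b\le c$ forces $a+b>d$, and moving the outermost interval $c$ across leaves a change equal to $\sqrt{a+b+c}-\sqrt{c+d}>0$. Either way the perimeter strictly decreases, so $(3,1)$ cannot be isoperimetric.

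Combining these exclusions with the mirror-image arguments for $(1,3)$ and $(0,4)$, the only distribution that can be isoperimetric is $(2,2)$, giving exactly two intervals on each side of the origin. The one subtlety to watch is the bookkeeping of which endpoint is the inner one after a relocation: the Length and Endpoint Inequalities (Proposition~\ref{prop:LengthEndpointInequality}) guarantee that placing the smaller mass nearer the origin is optimal, so each reconfigured side is correctly written with its masses increasing outward before the square-root comparison is carried out.
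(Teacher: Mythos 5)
Your proof is correct, and its skeleton matches the paper's: rule out the origin lying in an interval's interior via Proposition~\ref{prop:LocatingTheOriginAtAnEndpoint}, kill the $(4,0)$ split by moving the outermost interval across, and then handle $(3,1)$ by relocating one well-chosen interval. The differences are in how the $(3,1)$ case is organized and verified. The paper first cases on whether the globally smallest region $S$ is the lone interval or sits on the crowded side, and in the latter situation sub-cases on $X_1$ versus $L$, moving either $S$ or the largest region $X_2$; every perimeter comparison is done qualitatively through the Length and Endpoint Inequalities (Proposition~\ref{prop:LengthEndpointInequality}), comparing widths of equal-mass intervals. You instead case on the lone mass $d$ against the middle mass $b$ of the crowded side, move either the innermost or the outermost interval, and verify everything with the explicit parametrization $x=\sqrt{2S}$ of endpoint position by cumulative mass, reducing each comparison to monotonicity of $t\mapsto\sqrt{a+t}-\sqrt{t}$. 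Your computations check out (in particular $a+b>d$ in the second case follows from $b>d$, and the telescoped expression in the first case is positive term by term), and the closed-form approach makes each inequality a one-line verification; the trade-off is that it is wedded to the density $|x|$, whereas the paper's length-comparison argument is the same tool it reuses throughout and would survive for other increasing densities. Either way the exclusions are strict, so $(2,2)$ is forced.
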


\begin{proof}
Using Proposition \ref{prop:LocatingTheOriginAtAnEndpoint}, we know that the origin will not appear in the interior of one of the intervals. Thus, we simply need to eliminate the possibility that more than two intervals appear on one side of the origin. If 4 intervals are on one side of the origin, we can simply move the outermost interval to the other side of the origin. This will clearly result in a configuration with less perimeter. We are left exploring the configuration with 3 intervals on one side of the origin, and 1 interval on the other. In this case, we identify the interval of smallest mass as $S$.

\textbf{If $S$ is on the side with 1 interval}, then it is there alone. Following along with Figure \ref{fig:4bubblefringecase0}, we move the outermost interval across to the other side of the origin. Our original figure had total perimeter $a + b + c + d$, compared to our new perimeter of $b + c + d + e'$. Because $-a$ was further from the origin than $e'$, this new configuration will have less total perimeter.

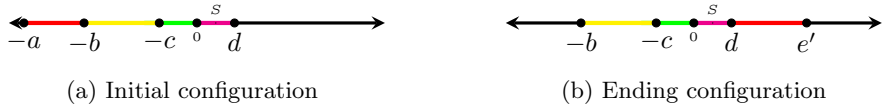
\begin{figure}[H]
     \centering
     \begin{subfigure}[h]{0.45\textwidth}
         \centering
         \begin{tikzpicture}
    \draw[stealth-stealth,very thick] (-2.5,0) -- (2.5,0); 
    \filldraw[magenta,ultra thick] (0,0) -- (0.5,0);
    \filldraw[green,ultra thick] (0,0) -- (-0.7,0);
    \filldraw[yellow,ultra thick] (-.5,0) -- (-1.5,0);
    \filldraw[red,ultra thick] (-1.5,0) -- (-2.3,0);
    \filldraw (-2.3,0) circle (1.5pt) node[anchor=north] {$-a$};
    \filldraw (-1.5,0) circle (1.5pt) node[anchor=north] {$-b$};
    \filldraw (-0.5,0) circle (1.5pt) node[anchor=north] {$-c$};
    \filldraw (0,0) circle (1.5pt) node[anchor=north] {\tiny$0$};
    \filldraw (0.5,0) circle (1.5pt) node[anchor=north] {$d$};
    \filldraw (.25,0) circle (.05pt) node[anchor=south] {\tiny$S$};

    \end{tikzpicture}
         \caption{Initial configuration}
         \label{fig:4bubblefringecase0.1}
     \end{subfigure}
     \hfill
     \begin{subfigure}[h]{0.45\textwidth}
         \centering
         \begin{tikzpicture}
    \draw[stealth-stealth,very thick] (-2.5,0) -- (2.5,0); 
    \filldraw[magenta,ultra thick] (0,0) -- (0.5,0);
    \filldraw[green,ultra thick] (0,0) -- (-0.7,0);
    \filldraw[yellow,ultra thick] (-.5,0) -- (-1.5,0);
    \filldraw[red,ultra thick] (0.5,0) -- (1.5,0);
    \filldraw (-1.5,0) circle (1.5pt) node[anchor=north] {$-b$};
    \filldraw (-0.5,0) circle (1.5pt) node[anchor=north] {$-c$};
    \filldraw (0,0) circle (1.5pt) node[anchor=north] {\tiny$0$};
    \filldraw (0.5,0) circle (1.5pt) node[anchor=north] {$d$};
    \filldraw (1.5,0) circle (1.5pt) node[anchor=north] {$e'$};
    \filldraw (.25,0) circle (.05pt) node[anchor=south] {\tiny$S$};
    \end{tikzpicture}
         \caption{Ending configuration}
         \label{fig:4bubblefringecase0.2}
     \end{subfigure}
      \caption{Case 1: Moving the outermost interval over.}
        \label{fig:4bubblefringecase0}
\end{figure}

\textbf{If $S$ is on the side with 3 intervals}, we must do a bit more work. We name the lone interval on the other side of the origin $L$, and name the remaining two intervals $X_1$ and $X_2$ as in Figure \ref{fig:4bubblefringecase2}.

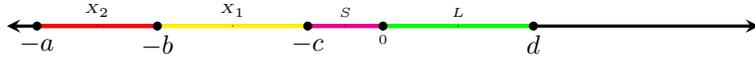
\begin{figure}[H]
\centering
        \begin{tikzpicture}
    \draw[stealth-stealth,very thick] (-5,0) -- (5,0); 
    \filldraw[green,ultra thick] (0,0) -- (2,0);
    \filldraw[magenta,ultra thick] (0,0) -- (-1,0);
    \filldraw[yellow,ultra thick] (-1,0) -- (-3,0);
    \filldraw[red,ultra thick] (-3,0) -- (-4.6,0);
    \filldraw (-4.6,0) circle (1.5pt) node[anchor=north] {$-a$};
    \filldraw (-3,0) circle (1.5pt) node[anchor=north] {$-b$};
    \filldraw (-1,0) circle (1.5pt) node[anchor=north] {$-c$};
    \filldraw (0,0) circle (1.5pt) node[anchor=north] {\tiny$0$};
    \filldraw (2,0) circle (1.5pt) node[anchor=north] {$d$};
    \filldraw (-2,0) circle (.05pt) node[anchor=south] {\tiny$X_1$};
    \filldraw (-.5,0) circle (.05pt) node[anchor=south] {\tiny$S$};
    \filldraw (-3.8,0) circle (.05pt) node[anchor=south] {\tiny$X_2$};
    \filldraw (1,0) circle (.05pt) node[anchor=south] {\tiny$L$};

    \end{tikzpicture}
         \caption{Our naming convention for the rest of this proof.}
         \label{fig:4bubblefringecase2}
         
\end{figure}
Proposition \ref{prop:TranspositionLemma} guarantees that our masses will be ordered as $S \leq X_1 \leq X_2$. However, we do not know the relative size of $L$. We have two cases to consider:

If $X_1 \leq L$, then we will want to position our smallest region $S$ across the origin. This will result in endpoints $-a$ and $-b$ moving closer to the origin, with endpoint $d$ moving further away from the origin. The initial and final configurations are shown in Figure \ref{fig:4bubblefringecase1}. Our original perimeter was $a + b + c + d$, while our new perimeter is $a' + b' + c' + d'$. By assumption, $X_1 \leq L$, and so $d \geq b'$. This will mean that $[d, d']$ is narrower than (or the same length as) $[-b, -b']$ by Proposition \ref{prop:LengthEndpointInequality}. We can conclude that $d' - d \leq b - b'$, and therefore that $d' + b' \leq b + d$. Since $a' < a$, we get that our new configuration has less perimeter than our original one.

\begin{figure}[H]
     \centering
     \begin{subfigure}[h]{0.45\textwidth}
         \centering
         \begin{tikzpicture}
    \draw[stealth-stealth,very thick] (-2.5,0) -- (2.5,0); 
    \filldraw[green,ultra thick] (0,0) -- (1,0);
    \filldraw[magenta,ultra thick] (0,0) -- (-0.5,0);
    \filldraw[yellow,ultra thick] (-.5,0) -- (-1.5,0);
    \filldraw[red,ultra thick] (-1.5,0) -- (-2.3,0);
    \filldraw (-2.3,0) circle (1.5pt) node[anchor=north] {$-a$};
    \filldraw (-1.5,0) circle (1.5pt) node[anchor=north] {$-b$};
    \filldraw (-0.5,0) circle (1.5pt) node[anchor=north] {$-c$};
    \filldraw (0,0) circle (1.5pt) node[anchor=north] {\tiny$0$};
    \filldraw (1,0) circle (1.5pt) node[anchor=north] {$d$};
    \filldraw (-.25,0) circle (.05pt) node[anchor=south] {\tiny$S$};

    \end{tikzpicture}
         \caption{Initial configuration}
         \label{fig:4bubblefringecase1.1}
     \end{subfigure}
     \hfill
     \begin{subfigure}[h]{0.45\textwidth}
         \centering
         \begin{tikzpicture}
    \draw[stealth-stealth,very thick] (-2.5,0) -- (2.5,0); 
    \filldraw[green,ultra thick] (0,0) -- (1,0);
    \filldraw[magenta,ultra thick] (0,0) -- (-0.5,0);
    \filldraw[yellow,ultra thick] (-.5,0) -- (-1.5,0);
    \filldraw[red,ultra thick] (-1.5,0) -- (-2.3,0);
    \filldraw (-2.3,0) circle (1.5pt) node[anchor=north] {$-a'$};
    \filldraw (-1.5,0) circle (1.5pt) node[anchor=north] {$-b'$};
    \filldraw (-0.0,0) circle (1.5pt) node[anchor=north] {$c$};
    \filldraw (-.5,0) circle (1.5pt) node[anchor=north] {\tiny$0$};
    \filldraw (1,0) circle (1.5pt) node[anchor=north] {$d'$};
    \filldraw (-.25,0) circle (.05pt) node[anchor=south] {\tiny$S$};

    \end{tikzpicture}
         \caption{Ending configuration}
         \label{fig:4bubblefringecase1.2}
     \end{subfigure}
      \caption{Case 2: Moving the smallest region across the origin.}
        \label{fig:4bubblefringecase1}
\end{figure}
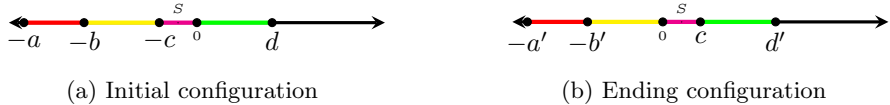

Finally, if $L < X_1$, We can move region $X_2$ over to the other side of the origin. We see this in Figure \ref{fig:4bubblefringecase3}. Our old perimeter was $a + b + c + d$, which we must compare to $b + c + d + e'$. However, since $L < x_1$, it is clear that $-b$ is further from the origin than $d$ is, and therefore we get that $-a$ is further from the origin than $e'$. Thus, this new configuration will have less total perimeter.

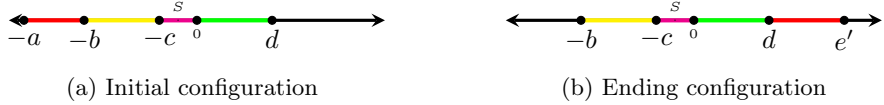
\begin{figure}[H]
     \centering
     \begin{subfigure}[h]{0.45\textwidth}
         \centering
         \begin{tikzpicture}
    \draw[stealth-stealth,very thick] (-2.5,0) -- (2.5,0); 
    \filldraw[green,ultra thick] (0,0) -- (1,0);
    \filldraw[magenta,ultra thick] (0,0) -- (-0.5,0);
    \filldraw[yellow,ultra thick] (-.5,0) -- (-1.5,0);
    \filldraw[red,ultra thick] (-1.5,0) -- (-2.3,0);
    \filldraw (-2.3,0) circle (1.5pt) node[anchor=north] {$-a$};
    \filldraw (-1.5,0) circle (1.5pt) node[anchor=north] {$-b$};
    \filldraw (-0.5,0) circle (1.5pt) node[anchor=north] {$-c$};
    \filldraw (0,0) circle (1.5pt) node[anchor=north] {\tiny$0$};
    \filldraw (1,0) circle (1.5pt) node[anchor=north] {$d$};
    \filldraw (-.25,0) circle (.05pt) node[anchor=south] {\tiny$S$};

    \end{tikzpicture}
         \caption{Initial configuration}
         \label{fig:4bubblefringecase3.1}
     \end{subfigure}
     \hfill
     \begin{subfigure}[h]{0.45\textwidth}
         \centering
         \begin{tikzpicture}
    \draw[stealth-stealth,very thick] (-2.5,0) -- (2.5,0); 
    \filldraw[green,ultra thick] (0,0) -- (1,0);
    \filldraw[magenta,ultra thick] (0,0) -- (-0.5,0);
    \filldraw[yellow,ultra thick] (-.5,0) -- (-1.5,0);
    \filldraw[red,ultra thick] (1,0) -- (2,0);
    \filldraw (-1.5,0) circle (1.5pt) node[anchor=north] {$-b$};
    \filldraw (-0.5,0) circle (1.5pt) node[anchor=north] {$-c$};
    \filldraw (0,0) circle (1.5pt) node[anchor=north] {\tiny$0$};
    \filldraw (1,0) circle (1.5pt) node[anchor=north] {$d$};
    \filldraw (2,0) circle (1.5pt) node[anchor=north] {$e'$};
    \filldraw (-.25,0) circle (.05pt) node[anchor=south] {\tiny$S$};

    \end{tikzpicture}
         \caption{Ending configuration}
         \label{fig:4bubblefringecase3.2}
     \end{subfigure}
      \caption{Case 3: Moving the largest region over.}
        \label{fig:4bubblefringecase3}
\end{figure}
This exhausts all of our possible cases. We conclude that, if we know our 4-bubble has exactly four intervals, then it must have two intervals on each side of the origin.
\end{proof}
Next we state several results that display the appropriate way to order intervals of different masses if we know, a priori, that we have a fixed number of intervals on each side of the origin.

\begin{proposition}[Alternating 4-interval framework]
On $\mathbb{R}$ with density $|x|$: Suppose we have four regions, condensed so that each region consists of a single interval, and organized so that there are two intervals on each side of the origin. Then the least-perimeter way to meet these specifications is to have intervals alternate back and forth across the origin as they increase from smallest mass to largest. This means that the masses can be ordered, up to reflection, as $R_3 R_1 . R_2 R_4$ (where the masses of region $R_i$ satisfy $M_1 \leq M_2 \leq M_3 \leq M_4$). 
\end{proposition}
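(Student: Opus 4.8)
The plan is to reduce the problem to a finite comparison among a handful of candidate configurations and then settle that comparison with a single monotonicity estimate. First I would invoke Proposition \ref{prop:TranspositionLemma}: on each side of the origin the inner interval must carry no more mass than the outer one, or else a transposition strictly lowers perimeter. Consequently, once we decide which pair of masses sits on each side, the inner/outer assignment is forced (smaller mass inside). Since there are exactly three ways to partition $\{M_1,M_2,M_3,M_4\}$ into two unordered pairs, up to reflection there are only three admissible configurations to compare: $M_2 M_1 . M_3 M_4$, the alternating $M_3 M_1 . M_2 M_4$, and $M_4 M_1 . M_2 M_3$.

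Next I would translate total perimeter into a function of the masses alone, exploiting the explicit density. An interval adjacent to the origin of mass $m$ occupies $[-\sqrt{2m},0]$ (or $[0,\sqrt{2m}]$), and if the inner interval on a side has mass $\alpha$ and the outer interval has mass $\beta$, then that side's outer endpoint sits at distance $\sqrt{2(\alpha+\beta)}$ from the origin. Because the shared origin endpoint contributes $f(0)=0$, the total perimeter of a configuration with inner/outer masses $(a,b)$ on the left and $(c,d)$ on the right is
\[
P = \sqrt{2}\left(\sqrt{a}+\sqrt{a+b}+\sqrt{c}+\sqrt{c+d}\right).
\]
Minimizing $P$ thus amounts to minimizing $g=\sqrt{a}+\sqrt{a+b}+\sqrt{c}+\sqrt{c+d}$ over the three pairings, where $a+b$ and $c+d$ are the two side-totals.

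The crux is the pairwise comparison, and the key observation makes it painless: for fixed $s>t\ge 0$ the difference $\delta(u)=\sqrt{u+s}-\sqrt{u+t}=(s-t)/(\sqrt{u+s}+\sqrt{u+t})$ is nonnegative and strictly decreasing in $u$. Writing each perimeter gap as a combination of such terms evaluated at the masses makes the signs transparent. Comparing the alternating value $g_{\mathrm{alt}}$ with that of $M_2 M_1 . M_3 M_4$ reduces, after cancelling the common $\sqrt{M_1}$, to $\phi(M_1)-\phi(0)-\phi(M_4)$, where $\phi(u)=\sqrt{u+M_3}-\sqrt{u+M_2}\ge 0$ is decreasing; since $\phi(M_1)\le\phi(0)$ and $\phi(M_4)\ge 0$ this is $\le 0$. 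Comparing $g_{\mathrm{alt}}$ with that of $M_4 M_1 . M_2 M_3$ reduces to $\psi(M_2)-\psi(M_1)$ with $\psi(u)=\sqrt{u+M_4}-\sqrt{u+M_3}$ decreasing, hence $\le 0$ because $M_1\le M_2$. Together these give $g_{\mathrm{alt}}\le g$ for the other two configurations.

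I expect the main obstacle to be purely organizational rather than deep: correctly setting up the mass-to-endpoint dictionary (so that cumulative mass determines each endpoint's distance from the origin) and grouping each perimeter difference into monotone square-root differences $\delta(u)$ so that every sign is forced. Once the differences are arranged this way, no inequality is subtle; strictness of the square-root bounds yields strict improvement unless masses coincide, in which case the degenerate equalities are exactly consistent with the ``up to reflection'' and non-strict optimality in the statement.
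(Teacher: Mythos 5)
Your proposal is correct, and I verified the two key identities: with endpoints located at $\sqrt{2\,(\text{cumulative mass})}$, the three admissible pairings (after Proposition \ref{prop:TranspositionLemma} forces the smaller mass of each pair inward) give perimeters whose differences collapse exactly to $\phi(M_1)-\phi(0)-\phi(M_4)$ and $\psi(M_2)-\psi(M_1)$ as you claim, and both are nonpositive by monotonicity of $u\mapsto\sqrt{u+s}-\sqrt{u+t}$. However, your route is genuinely different from the paper's. The paper does not prove the 4-interval framework directly: it proves the 6-interval framework (Proposition \ref{Prop:6IntervalFramework}) by an extended case analysis --- nine cases of relative mass orderings, each resolved by a discrete transposition whose effect on perimeter is controlled qualitatively via the width comparison of Proposition \ref{prop:LengthEndpointInequality} rather than by explicit formulas --- and then obtains the 4- and 5-interval statements as corollaries by letting one or two masses degenerate to zero. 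Your argument buys brevity and transparency: the density $|x|$ gives a closed-form dictionary from masses to endpoints, the candidate set is reduced to three by a single appeal to the transposition lemma, and each comparison is a one-line concavity estimate; it also makes the equality cases visible. What it gives up is uniformity: it is tied to the explicit antiderivative of $|x|$ (the paper's width-comparison arguments use only qualitative monotonicity of the density), and extending it to the 5- and 6-interval frameworks would require enumerating $10$ pairings rather than $3$, at which point the explicit comparisons start to resemble the paper's case analysis in bulk. As a standalone proof of the 4-interval proposition, yours is complete and arguably cleaner.
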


\begin{proposition}[Alternating 5-interval framework]
On $\mathbb{R}$ with density $|x|$: Suppose we have five regions, condensed so that each region consists of a single interval, and organized so that one side of the origin has two intervals while the other side has three. Then the least-perimeter way to meet these specifications is to order the masses, up to reflection, as $ R_5 R_3 R_1 . R_2 R_4$ (where the masses of region $R_i$ satisfy $M_1 \leq \dots \leq M_5$).
\end{proposition}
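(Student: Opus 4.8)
The plan is to reduce the problem to a combinatorial choice and then attack it with concavity-based exchange comparisons. By the Transposition Lemma (Proposition~\ref{prop:TranspositionLemma}), on each side of the origin the intervals must be ordered from smallest mass (adjacent to the origin) outward to largest; otherwise a single transposition strictly lowers perimeter. A candidate configuration is therefore determined entirely by \emph{which} two of the masses $M_1\le\cdots\le M_5$ are assigned to the two-interval side, the remaining three going to the three-interval side. Since density $|x|$ makes an interval $[0,r]$ carry mass $r^2/2$, an interval whose outer endpoint encloses cumulative mass $S$ (measured from the origin outward along its own side) contributes exactly $\sqrt{2S}$ to the total perimeter. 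Writing $S^{+}_1\le S^{+}_2$ for the two cumulative sums on the short side and $S^{-}_1\le S^{-}_2\le S^{-}_3$ for the three on the long side, the total perimeter equals $\sqrt2\bigl(\sqrt{S^{+}_1}+\sqrt{S^{+}_2}+\sqrt{S^{-}_1}+\sqrt{S^{-}_2}+\sqrt{S^{-}_3}\bigr)$, so the task is to choose the partition minimizing a sum of square roots of partial sums; there are only $\binom{5}{2}=10$ partitions to compare.

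The key observation is that the concavity of $t\mapsto\sqrt t$ controls one especially clean move, the \emph{outermost swap}. If I exchange the outermost interval on the long side (mass $e$, enclosing interior mass $I^{-}=S^{-}_2$) with the outermost interval on the short side (mass $b$, enclosing interior mass $I^{+}=S^{+}_1$), then every cumulative sum except the two outermost ones is left unchanged, so the entire comparison collapses to $\sqrt{I^{-}+e}+\sqrt{I^{+}+b}$ versus $\sqrt{I^{-}+b}+\sqrt{I^{+}+e}$. Concavity makes the aligned pairing (larger outer mass with larger interior mass) the more spread-out pair, hence the one with the \emph{smaller} sum: the larger outermost mass should sit on whichever side carries more interior mass. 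I would use this to argue that $M_5$ belongs outermost on the long side. Note, by contrast, that a swap at an inner slot is \emph{not} a two-term comparison, since changing a depth-one or depth-two mass shifts every partial sum farther out on that side; so the arrangement cannot be pinned down by a single chain of clean swaps.

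To finish, I would peel off the outermost ($M_5$) interval and apply the already-established Alternating 4-interval framework to the four remaining intervals, which are split two-and-two across the origin; that proposition orders them as $R_3R_1.\,R_2R_4$, and reattaching $M_5$ outermost on the long side yields exactly the claimed order $R_5R_3R_1.\,R_2R_4$. The step I expect to be the real obstacle is the coupling that this peeling glosses over: the perimeter contributed by the $M_5$ interval is $\sqrt{2(S^{-}_2+M_5)}$, which depends on the long side's interior total, so the optimal split of the inner four and the choice of outermost mass cannot be optimized independently. Concretely, the closest competitor to the claimed optimum places $M_5$ outermost on the \emph{short} side, and separating the two comes down to an inequality of the form $\sqrt{M_2+M_4}+\sqrt{M_1+M_3+M_5}\le\sqrt{M_2+M_5}+\sqrt{M_1+M_3+M_4}$, whose margin can be small. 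Resolving every such coupled comparison across all admissible orderings of $M_1,\dots,M_5$ forces a split into cases according to the relative sizes of interior and exterior masses, with the Length and Endpoint Inequalities (Proposition~\ref{prop:LengthEndpointInequality}) used to control the endpoints in each case; this casework is precisely why these optimal-arrangement statements are consigned to the appendix.
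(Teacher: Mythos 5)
Your reduction to a finite comparison is sound: with density $|x|$ an endpoint enclosing cumulative one-sided mass $S$ sits at $\sqrt{2S}$, so total perimeter is $\sqrt2$ times a sum of square roots of partial sums, and Proposition \ref{prop:TranspositionLemma} fixes the order within each side, leaving only $\binom{5}{2}=10$ partitions. Your outermost-swap lemma is also correct, and it is essentially the same concavity fact the paper packages as Proposition \ref{prop:LengthEndpointInequality}. But the proof is not finished, and you say so yourself. Two concrete gaps. First, the outermost-swap comparison only decides, for a \emph{given} partition, which of the two currently-outermost masses should sit over the larger interior mass; it does not by itself force $M_5$ to the three-interval side, because whether that side carries the larger interior mass depends on the partition you are trying to determine (short side $M_4M_5$ against long side $M_1M_2M_3$ compares $M_4$ with $M_1+M_2$, which can go either way). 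Second, the step ``peel off $M_5$ and apply Proposition \ref{Prop:4IntervalFramework}'' is invalid as stated: that proposition is proved for four intervals with nothing outside them, whereas here every rearrangement of the inner four moves the outer endpoint of the $M_5$ interval, whose contribution $\sqrt{2(S_2^-+M_5)}$ depends on the long side's interior total. This is exactly the coupling you flag at the end, and it is the substance of the proof rather than a residual detail; acknowledging that ``resolving every such coupled comparison forces a split into cases'' is a plan for a proof, not a proof.

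For comparison, the paper never proves the 5-interval statement directly: it proves the 6-interval framework by exhaustive casework (nine cases on the relative order of the masses, each resolved by an explicit block transposition whose endpoint changes are controlled by Proposition \ref{prop:LengthEndpointInequality}), and then obtains the 5- and 4-interval statements as the degenerate cases in which one or two of the six masses are zero. If you want to complete your route, you would need either to carry out a comparable amount of casework across the ten partitions, or to prove a ``relative'' version of the 4-interval framework valid in the presence of a fixed outermost interval. Your explicit $\sqrt{2S}$ formulation would make either task cleaner than the paper's endpoint-by-endpoint bookkeeping, but as written the argument stops where the work begins.
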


\begin{proposition}[Alternating 6-interval framework]
On $\mathbb{R}$ with density $|x|$: Suppose we have six regions, condensed so that each region consists of a single interval, and organized so that there are three intervals on each side of the origin. Then the least-perimeter way to meet these specifications is to have intervals alternate back and forth across the origin as they increase from smallest mass to largest. This means that the masses can be ordered, up to reflection, as $ R_5 R_3 R_1 . R_2 R_4 R_6$ (where the masses of region $R_i$ satisfy $M_1 \leq \dots \leq M_6$). 
\end{proposition}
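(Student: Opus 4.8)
The plan is to convert the statement into an explicit minimization and then dispatch it with the machinery already built up. First I would invoke Proposition \ref{prop:TranspositionLemma} to force each side of the origin to be sorted, so that on each side the masses increase from the origin outward; this reduces the search space to the $\binom{6}{3}/2 = 10$ (up to reflection) ways of partitioning $\{M_1,\dots,M_6\}$ into two triples, each listed in increasing order. Under the density $|x|$, every non-origin endpoint at signed distance $r$ contributes $|r|$ to the perimeter, the origin contributes $0$, and an endpoint closing off cumulative mass $S$ on one side sits at distance $\sqrt{2S}$. Writing $S^L_k$ (resp.\ $S^R_k$) for the cumulative mass of the innermost $k$ intervals on the left (resp.\ right), the total perimeter becomes
\begin{equation}
P = \sqrt{2}\sum_{k=1}^{3}\left(\sqrt{S^L_k} + \sqrt{S^R_k}\right).
\end{equation}
The proposition is now the purely combinatorial claim that this sum of square roots of cumulative masses is minimized by the alternating partition $\{M_1,M_3,M_5\}$, $\{M_2,M_4,M_6\}$.

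I would organize the optimization in two stages: \textbf{layer structure} (which masses are innermost, middle, and outermost) and \textbf{pairing} (how the chosen layers are distributed left/right). For the layer structure I would note that the two frameworks already established — the Alternating 4-interval framework and the Alternating 5-interval framework — handle the inner portion: once the side-memberships of the outermost pair are fixed, the terms $\sqrt{S^L_1},\sqrt{S^L_2},\sqrt{S^R_1},\sqrt{S^R_2}$ are governed by the inner four masses and their arrangement must itself be alternating. It then remains to show the two largest masses $M_5, M_6$ occupy the outermost slots. Here I would use an exchange argument driven by concavity: an outermost mass appears in only \emph{one} cumulative sum, whereas an inner or middle mass appears in two or three, so since $\sqrt{\cdot}$ is concave and increasing, the marginal perimeter cost of a mass is smallest when it is added last. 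Concretely, if $M_5$ and $M_6$ were forced to share a side, I would swap $M_6$ outward against the opposite side's outer interval and verify via Proposition \ref{prop:LengthEndpointInequality} that perimeter strictly drops.

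With the layer structure pinned down, the total $\sum_{k}(S^L_k + S^R_k) = 3(\text{inner sum})+2(\text{middle sum})+(\text{outer sum})$ is fixed, so the remaining pairing question is to minimize a sum of concave square roots subject to a fixed sum of arguments. This is governed by Schur-concavity: the minimizer is the pairing whose vector of six cumulative masses is most spread out in the majorization order. I would check that the fully alternating pairing — putting the smaller element of each layer pair on one side and the larger on the other — produces a cumulative-mass vector that majorizes those of all competing pairings, and hence gives the least perimeter; fixing $M_1$ on the left, the competitors to eliminate are $\{M_1,M_3,M_6\}$, $\{M_1,M_4,M_5\}$, and $\{M_1,M_4,M_6\}$ against their complements, each beaten by a short majorization comparison.

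The main obstacle is the cross-origin comparison. Within a single side the Transposition Lemma settles everything instantly, but across the origin the cumulative-mass coupling interacts with the concavity of $\sqrt{\cdot}$, so the sign of a given exchange is not visually obvious and must be certified by a genuine inequality — either the Length and Endpoint Inequalities of Proposition \ref{prop:LengthEndpointInequality} or an application of the mean value theorem to $\sqrt{\cdot}$ on the relevant cumulative-mass intervals. Because several of the ten sorted partitions are only marginally worse than the alternating one, these near-ties cannot be dismissed by soft arguments and must be ruled out by explicit case-by-case estimates; this is precisely the case analysis that I would defer to the appendix.
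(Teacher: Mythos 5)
Your reduction of the problem to minimizing $\sqrt{2}\sum_{k=1}^{3}\left(\sqrt{S^L_k}+\sqrt{S^R_k}\right)$ over the ten sorted partitions is correct and is a genuinely different (and cleaner) framing than the paper's, which never writes the perimeter in closed form: the paper instead fixes the smallest mass adjacent to the origin, splits on whether the second-smallest lies on the same or the opposite side, and disposes of ten orderings (Cases 0--9) one at a time, each by an explicit transposition certified by Proposition \ref{prop:LengthEndpointInequality}. However, your plan has two genuine gaps. First, you cannot invoke the Alternating 4- and 5-interval frameworks to handle the inner four intervals: in the paper those are \emph{corollaries} of the present proposition (obtained by setting one or two masses to zero), so the appeal is circular; and even granting them independently, the reduction fails because the outermost terms $\sqrt{S^L_3}$ and $\sqrt{S^R_3}$ depend on $S^L_2$ and $S^R_2$, that is, on how the inner four masses are split between the two sides. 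Optimizing the four inner terms in isolation does not decouple from the outer two, so ``the inner arrangement must itself be alternating'' does not follow from the 4-interval framework.

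Second, the two steps you actually rely on --- the exchange argument forcing $M_5, M_6$ into the outermost slots, and the Schur-concavity/majorization comparison among the remaining pairings --- are asserted rather than proved, and the latter is more delicate than you suggest. Majorization requires comparing the decreasingly sorted cumulative-mass vectors, but the sort order is not determined by $M_1\leq\cdots\leq M_6$ alone (for instance $M_1+M_3+M_5$ versus $M_2+M_4$ can go either way), so each of your ``short majorization comparisons'' itself splits into subcases; the case analysis has not been avoided, only relocated. Until the exchange step and those partial-sum inequalities are written out and checked for all admissible orderings of the cumulative masses, this is a plausible program rather than a complete argument. The skeleton is sound and, if carried out, would yield a proof independent of the paper's; as it stands it is not yet one.
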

The proofs of these propositions are left for the appendix. However, with these propositions, we are now able to prove the 4-bubble theorem.

\begin{theorem}[4-Bubble Theorem]
\label{Thm:FourBubble}
On $\mathbb{R}$ with density $|x|$: The $n$-bubble solution to the isoperimetric problem for $n=4$ is a condensed configuration of four regions, with each region consisting of a single interval, and with two intervals on each side of the origin. If the four regions have masses $M_1 \leq M_2 \leq M_3 \leq M_4$, then the regions of mass $M_1$ and $M_2$ are adjacent to the origin; $M_3$ is positioned to be adjacent to $M_1$; and $M_4$ is positioned to be adjacent to $M_2$.
\end{theorem}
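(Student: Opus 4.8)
The plan is to reduce, using the structural results already established, to a small explicitly enumerable list of interval configurations, and then finish with the ordering propositions together with one direct perimeter comparison. By the corollary following the Mass Stealing Lemma (Proposition \ref{prop:OuterMassStealing}), any isoperimetric $4$-bubble is condensed into at most $2\cdot 4 - 2 = 6$ adjacent intervals, with each region occupying at most one interval on each side of the origin. By Proposition \ref{prop:LocatingTheOriginAtAnEndpoint} I may assume the origin sits at an endpoint shared by two intervals rather than in the interior of any one interval, since otherwise the perimeter could be strictly decreased. Thus every region is either a single interval or a pair of intervals straddling the origin, and the total interval count is $4$, $5$, or $6$ according to whether $0$, $1$, or $2$ regions straddle the origin.

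Next I would rule out the six-interval case. If two regions $A$ and $B$ each straddle the origin, then as doubled regions they are ordered, alternating, or nested. The ordered case cannot occur in a condensed configuration; the alternating case is eliminated by Proposition \ref{Prop:SimultaneousSiphoning}, whose simultaneous siphoning strictly lowers perimeter; and the nested case, in which one doubled region immediately surrounds the other around the origin, is eliminated by Proposition \ref{Prop:InnermostSliding}. Using the Transposition Lemma to order the interspersed single-region intervals so that the relevant pair is genuinely adjacent, these continuous variations leave no six-interval minimizer, so an isoperimetric $4$-bubble has at most five intervals.

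It then remains to compare the optimal five-interval and four-interval configurations. In the four-interval case the 4-Bubble Lemma forces two intervals on each side of the origin, and the Alternating 4-interval framework forces the ordering $R_3 R_1 . R_2 R_4$ up to reflection; this is exactly the claimed configuration, and I would record its perimeter as an explicit function of $M_1,\dots,M_4$. In the five-interval case exactly one region, say of mass $M_j$, is split into two pieces of masses $\mu$ and $M_j-\mu$ on opposite sides of the origin; because these pieces share no endpoint, the total perimeter equals that of five single-interval ``regions'' with masses $\{\mu,\,M_j-\mu,\,M_i\ (i\neq j)\}$, whose minimizing arrangement is pinned down by the Alternating 5-interval framework. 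The final step is the explicit inequality showing that this five-interval minimum, optimized over the split parameter $\mu\in(0,M_j)$ and over the choice of which region splits, still strictly exceeds the four-interval perimeter, so the minimizer cannot have five intervals.

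I expect this last explicit comparison to be the main obstacle: it is the one place where the density $|x|$ must be used quantitatively, since endpoint positions are recovered from masses via $\int_0^{x}|t|\,dt = x^2/2$, and the inequality must be verified across the finitely many mass-orderings and the continuum of split values $\mu$. Everything preceding it is structural and rests entirely on the propositions already proved; the frameworks are doing the combinatorial bookkeeping precisely so that only one five-interval arrangement and one four-interval arrangement ever need to be compared head-to-head.
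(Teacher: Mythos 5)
Your structural reduction matches the paper's almost exactly: condense to at most six intervals via Corollary \ref{Cor:CondenseToTheOrigin} and two applications of Proposition \ref{prop:OuterMassStealing}, kill the six-interval case with Propositions \ref{Prop:SimultaneousSiphoning} and \ref{Prop:InnermostSliding} (the transposition step you mention is not even needed there, since mass stealing forces the two outermost intervals to be single regions, so the four inner intervals are automatically adjacent), relocate the origin with Proposition \ref{prop:LocatingTheOriginAtAnEndpoint}, and dispatch the four-interval case with Proposition \ref{Prop:4IntervalFramework}. Where you diverge is the five-interval case. The paper uses Proposition \ref{Prop:5IntervalFramework} only to pin down the starting arrangement ($R_1 \leq A^- \leq A^+ \leq R_2 \leq R_3$), and then closes by purely qualitative means: it consolidates the split region $A$, transposes intervals as needed, and verifies perimeter decrease in four cases (according to where $A = A^- + A^+$ falls among $R_1, R_2, R_3$) using only the length-and-endpoint inequalities of Proposition \ref{prop:LengthEndpointInequality}. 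No explicit formula $x = \sqrt{2m}$ ever appears.

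The genuine gap in your proposal is that the step carrying all the mathematical weight --- the inequality showing the optimal five-interval perimeter, minimized over the split parameter $\mu \in (0, M_j)$ and the choice of split region, strictly exceeds the four-interval optimum --- is announced as ``the main obstacle'' but not carried out, and it is not a routine verification. In particular, the inequality \emph{degenerates to equality} as $\mu \to 0^+$ (the five sorted pieces become $0, M_1, M_2, M_3, M_4$ and the alternating five-interval perimeter converges to the four-interval one), so no soft compactness argument yields strictness on the open interval; one must control the behavior of a sum of square roots whose arguments are partial sums of the sorted piece-masses, with the sort order itself changing as $\mu$ crosses the various $M_i$. That is a case analysis at least as involved as the paper's Cases 2--4, and it is the one place the density $|x|$ enters quantitatively. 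Your plan is viable in principle, and arguably cleaner in that it isolates all the analysis into a single explicit inequality, but as written the proof is incomplete precisely at its decisive step; the paper's consolidate-and-transpose argument is the machinery that replaces it.
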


\begin{figure}[H]
     \centering
         \centering
         \begin{tikzpicture}
    \draw[stealth-,very thick] (-2,2) -- (0,0);
    \draw[-stealth,very thick](0,0) -- (2,2);
    \draw[stealth-stealth,very thick] (-3.75,0) -- (3.75,0); 
    \filldraw[red,ultra thick] (0,0) -- (1,0);
    \filldraw[blue, ultra thick] (0,0) -- (-1.5,0);
    \filldraw[green, ultra thick] (1,0) -- (2.2, 0);
    \filldraw[yellow, ultra thick] (-3.4,0) -- (-1.5,0);

    \filldraw (0,0) circle (1.5pt) node[anchor=north] {\tiny$0$};
    \filldraw (1,0) circle (1.5pt);
    \filldraw (-1.5,0) circle (1.5pt);
    \filldraw (2.2,0) circle (1.5pt);
    \filldraw (-3.4,0) circle (1.5pt);
    
    \filldraw (.6,0) circle (.05pt) node[anchor=south] {\tiny$M_1$};
    \filldraw (-0.8,0) circle (.05pt) node[anchor=south] {\tiny$M_2$};
    \filldraw (1.6,0) circle (.05pt) node[anchor=south] {\tiny$M_3$};
    \filldraw (-2.5, 0) circle (.05pt) node[anchor=south] {\tiny $M_4$};
    \end{tikzpicture}
      \caption{Solution to the 4-bubble problem with density $|x|$}
\end{figure}
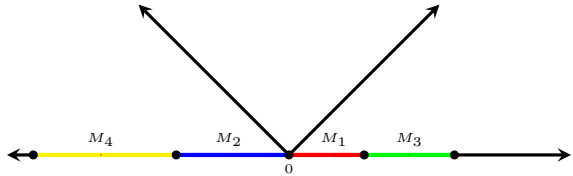


\begin{proof}
Based on Corollary \ref{Cor:CondenseToTheOrigin}, we know an isoperimetric 4-bubble can be made up of at most 8 intervals. Applying Proposition \ref{prop:OuterMassStealing} twice, we can reduce this maximum number to 6. Additionally, Proposition \ref{prop:OuterMassStealing} guarantees that the outermost interval on each side of the origin is the \emph{only} interval of its respective region. Thus, if we do have 6 total intervals, two of these regions are each made up of two intervals and are positioned on the interior of the configuration. 

With 6 intervals, there are two possible scenarios: the innermost intervals are either alternating, or they are nested directly adjacent to the origin. Based on how these inner intervals are configured, we can apply either Proposition \ref{Prop:SimultaneousSiphoning} or \ref{Prop:InnermostSliding} to eliminate one more interval. Finally, we can slide the entire picture (if necessary) using \ref{prop:LocatingTheOriginAtAnEndpoint} to give us a configuration with at most five distinct intervals, and with the origin between two of these intervals in the interior. 

Thus, an isoperimetric configuration has either 4 or 5 intervals in total. If it has 4 intervals, every region is made up of a single interval and Proposition \ref{Prop:4IntervalFramework} will prove the theorem for us. Thus, we only need to worry about a scenario in which we have five intervals. In such a case, we know that a single region (which we identify as $A = A^- \cup A^+$) is split into two intervals, one on each side of the origin. The other regions we will label as $R_i$, with mass $M_i$. Throughout this proof, we will abuse notation by conflating regions $R_i$ with their mass size $M_i$, and will use the understanding that $M_1 \leq M_2 \leq M_3$.  Because of the Proposition \ref{prop:OuterMassStealing}, we know neither $A^+$ nor $A^-$ is part of an outermost interval on either side. At the same time, Proposition \ref{prop:LocatingTheOriginAtAnEndpoint} allows us to conclude that $A^+$ and $A^-$ cannot both be directly adjacent to the origin. Therefore, our configuration (up to reflection) must look like Figure \ref{fig:initPositions}. Due to Proposition \ref{Prop:5IntervalFramework} in the appendix, we know that the relative sizes of these masses is $R_1 \leq A^- \leq A^+ \leq R_2 \leq R_3$.



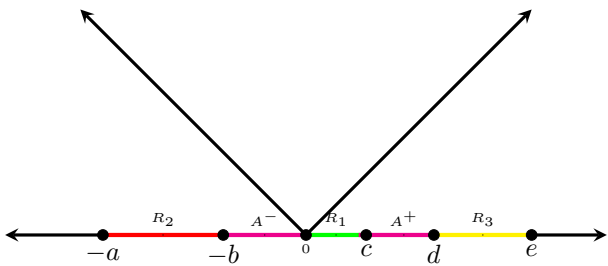
\begin{figure}[H]
    \centering
    \begin{tikzpicture}
    \draw[stealth-,very thick] (-3,3) -- (0,0);
    \draw[-stealth,very thick](0,0) -- (3,3);
    \draw[stealth-stealth,very thick] (-4,0) -- (4,0); 
    
    \filldraw[green,ultra thick] (0,0) -- (.8,0);
    
    \filldraw[magenta,ultra thick] (0,0) -- (-1.1,0);
    \filldraw[magenta,ultra thick] (.7,0) -- (1.7,0);
    \filldraw[yellow,ultra thick] (1.7,0) -- (3,0);
    \filldraw[red,ultra thick] (-1.1,0) -- (-2.7,0);

    \filldraw (-2.7,0) circle (2pt) node[anchor=north] {$-a$};
    \filldraw (-1.1,0) circle (2pt) node[anchor=north] {$-b$};
    \filldraw (0,0) circle (2pt) node[anchor=north] {\tiny$0$};
    \filldraw (.8,0) circle (2pt) node[anchor=north] {$c$};
    \filldraw (1.7,0) circle (2pt) node[anchor=north] {$d$};
    \filldraw (3,0) circle (2pt) node[anchor=north] {$e$};
    
    \filldraw (-1.9,0) circle (.05pt) node[anchor=south] {\tiny$R_2$};
    \filldraw (-.55,0) circle (.05pt) node[anchor=south] {\tiny$A^-$};
    \filldraw (.4,0) circle (.05pt) node[anchor=south] {\tiny$R_1$};
    \filldraw (1.3,0) circle (.05pt) node[anchor=south] {\tiny$A^+$};
    \filldraw (2.35,0) circle (.05pt) node[anchor=south] {\tiny$R_3$};

    \end{tikzpicture}
    \caption{Initial arbitrary $4$ bubble configuration of $5$ intervals}
    \label{fig:initPositions}
\end{figure}

To show this 5-interval configuration is not isoperimetric, we will want to show that we can reduce perimeter by creating a consolidated configuration (with $A^-$ and $A^+$ joined into a single interval which we identify as $A$). There are four possible scenarios we have to consider, depending on how the size of $A$ fits into the relative sizes of $R_1$, $R_2$, and $R_3$.


\textbf{Case 1: $A \leq R_1$.} This is impossible, as it would mean $A^+$ and $A^-$ were each individually smaller than $R_1$, and therefore $R_1$ would not be directly adjacent to the origin.

\textbf{Case 2: $R_1 < A \leq R_2$.}  First, we will consolidate $A$ by bringing $A^+$ to the left side of the origin and adjoining it to $A^-$ . This intermediate step gives the new arrangement (a) with new endpoints $-a'$, $-b'$, and $e'$. The next step (b) is to switch the position $R_2$ and $R_3$. This gives us our final position with new endpoints $-a''$ and $e''$ and, therefore, the new total perimeter $a'' + b' + c + e''$. We will show this new perimeter is less than our starting perimeter.

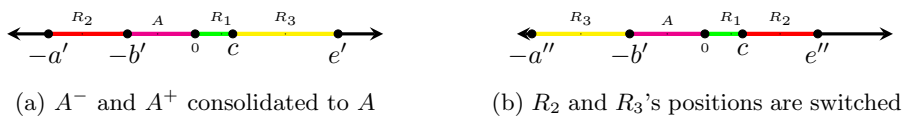
\begin{figure}[H]
     \centering
     \begin{subfigure}[h]{0.45\textwidth}
         \centering
         \begin{tikzpicture}
    \draw[stealth-stealth,very thick] (-2.5,0) -- (2.5,0); 
    \filldraw[green,ultra thick] (0,0) -- (.5,0);
    \filldraw[magenta,ultra thick] (0,0) -- (-1,0);
    \filldraw[yellow,ultra thick] (.5,0) -- (1.9,0);
    \filldraw[red,ultra thick] (-.9,0) -- (-1.95,0);
    \filldraw (-1.95,0) circle (1.5pt) node[anchor=north] {$-a'$};
    \filldraw (-.9,0) circle (1.5pt) node[anchor=north] {$-b'$};
    \filldraw (0,0) circle (1.5pt) node[anchor=north] {\tiny$0$};
    \filldraw (.5,0) circle (1.5pt) node[anchor=north] {$c$};
    \filldraw (1.9,0) circle (1.5pt) node[anchor=north] {$e'$};
    \filldraw (-1.5,0) circle (.05pt) node[anchor=south] {\tiny$R_2$};
    \filldraw (-.5,0) circle (.05pt) node[anchor=south] {\tiny$A$};
    \filldraw (.35,0) circle (.05pt) node[anchor=south] {\tiny$R_1$};
    \filldraw (1.2,0) circle (.05pt) node[anchor=south] {\tiny$R_3$};

    \end{tikzpicture}
         \caption{$A^-$ and $A^+$ consolidated to $A$}
         \label{fig:medConsolidated}
     \end{subfigure}
     \hfill
     \begin{subfigure}[h]{0.45\textwidth}
         \centering
         \begin{tikzpicture}
    \draw[stealth-stealth,very thick] (-2.5,0) -- (2.5,0); 
    \filldraw[green,ultra thick] (0,0) -- (.5,0);
    \filldraw[magenta,ultra thick] (0,0) -- (-1,0);
    \filldraw[yellow,ultra thick] (-1,0) -- (-2.3,0);
    \filldraw[red,ultra thick] (.5,0) -- (1.5,0);
    \filldraw (-2.3,0) circle (1.5pt) node[anchor=north] {$-a''$};
    \filldraw (-1,0) circle (1.5pt) node[anchor=north] {$-b'$};
    \filldraw (0,0) circle (1.5pt) node[anchor=north] {\tiny$0$};
    \filldraw (.5,0) circle (1.5pt) node[anchor=north] {$c$};
    \filldraw (1.5,0) circle (1.5pt) node[anchor=north] {$e''$};
    \filldraw (-1.65,0) circle (.05pt) node[anchor=south] {\tiny$R_3$};
    \filldraw (-.5,0) circle (.05pt) node[anchor=south] {\tiny$A$};
    \filldraw (.35,0) circle (.05pt) node[anchor=south] {\tiny$R_1$};
    \filldraw (1,0) circle (.05pt) node[anchor=south] {\tiny$R_2$};
    \end{tikzpicture}
         \caption{$R_2$ and $R_3$'s positions are switched}
         \label{fig:medFinal}
     \end{subfigure}
      \caption{Case 2: Initial rearrangements}
        \label{fig:initSteps1}
\end{figure}

We keep track of the changes that have been made with Figure \ref{fig:medHistory}, where we introduce $\updelta_x=R_3-R_2$ and $\updelta_{x_1}=R_2-A^+$. The appropriate masses have been identified in the figure.

\begin{figure}[H]
    \centering
    \begin{tikzpicture}
    \draw[stealth-,very thick] (-4.5,4.5) -- (0,0);
    \draw[-stealth,very thick](0,0) -- (4.5,4.5);
    \draw[stealth-stealth,very thick] (-4.5,0) -- (4.5,0); 
    \filldraw[green,ultra thick] (0,0) -- (1,0);
    \filldraw[magenta,ultra thick] (0,0) -- (-1.5,0);
    \filldraw[yellow,ultra thick] (-1.5,0) -- (-3.8,0);
    \filldraw[red,ultra thick] (1,0) -- (2.5,0);
    \draw[thick](-3.8,0) -- (-3.8,3.8);
    \draw[dotted,thick] (-3.1,0) -- (-3.1,3.1);
    \draw[dotted,thick] (-2.2,0) -- (-2.2,2.2);
    \draw[thick] (-1.5,0) -- (-1.5,1.5);
    \draw[dotted,thick] (-1,0) -- (-1,1);
    \draw[thick] (1,0) -- (1,1);
    \draw[dotted,thick] (1.9,0) -- (1.9,1.9);
    \draw[thick] (2.5,0) -- (2.5,2.5);
    \draw[dotted,thick] (3.2,0) -- (3.2,3.2);
    \draw[dotted,thick] (3.8,0) -- (3.8,3.8);

    \filldraw [blue](-3.8,0) circle (1.5pt) node[anchor=north] {$-a''$};
    \filldraw [blue](-1.5,0) circle (1.5pt) node[anchor=north] {$-b'$};
    \filldraw (-3.1,0) circle (1pt) node[anchor=north] {\scriptsize $-a$};
    \filldraw (-2.2,0) circle (1pt) node[anchor=north] {\scriptsize $-a'$};
    \filldraw (-1,0) circle (1pt) node[anchor=north] {\scriptsize $-b$};
    \filldraw (0,0) circle (1.5pt) node[anchor=north] {\scriptsize};
    \filldraw [blue](1,0) circle (1.5pt) node[anchor=north] {$c$};
    \filldraw [blue](2.5,0) circle (1.5pt) node[anchor=north] {$e''$};
    \filldraw (1.9,0) circle (1pt) node[anchor=north] {\scriptsize$d$};
    \filldraw (3.2,0) circle (1pt) node[anchor=north] {\scriptsize$e'$};
    \filldraw (3.8,0) circle (1pt) node[anchor=north] {\scriptsize$e$};
    
    \filldraw (-3.4,0) circle (.05pt) node[anchor=south] {\scriptsize$\updelta_x$};
    \filldraw (-2.65,0) circle (.05pt) node[anchor=south] {\tiny$R_3$};
    \filldraw (-1.8,0) circle (.05pt) node[anchor=south] {\scriptsize$\updelta_{x_1}$};
    \filldraw (2.2,0) circle (.05pt) node[anchor=south] {\scriptsize$\updelta_{x_1}$};
    \filldraw (2.85,0) circle (.05pt) node[anchor=south] {\scriptsize$\updelta_{x}$};
    \filldraw (3.5,0) circle (.05pt) node[anchor=south] {\tiny$A^+$};
    \filldraw (-1.2,0) circle (.05pt) node[anchor=south] {\tiny$A^+$};
    \filldraw (-.6,0) circle (.05pt) node[anchor=south] {\tiny$A^-$};
    \filldraw (.5,0) circle (.05pt) node[anchor=south] {\tiny$R_1$};
    \filldraw (1.5,0) circle (.05pt) node[anchor=south] {\tiny$A^+$};

    \end{tikzpicture}
    \caption{Case 2: Endpoints and masses labeled}
    \label{fig:medHistory}
\end{figure}
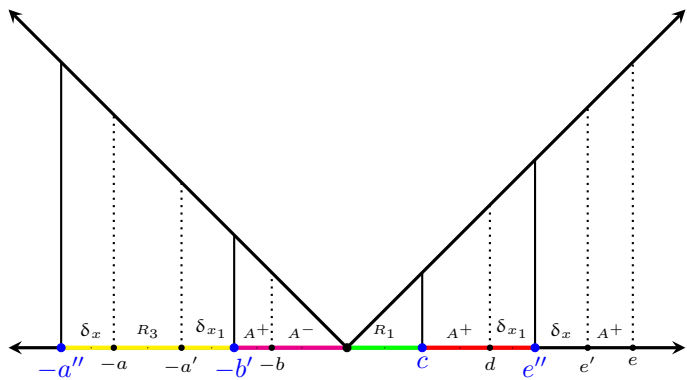

We want to show that $a''+b'+c+e'' \leq a+b+c+d+e$. To start, notice in Figure \ref{fig:medHistory}  that $[-b', -b]$ and $[c, d]$ both have mass of $A^+$. However, because $-b$ is farther away from the origin than $c$,  Proposition \ref{prop:LengthEndpointInequality} guarantees that $b'-b < d-c$. By some algebra, we have that $b' < b+d-c < b+d$.  
Additionally, notice that $[-a'', -a]$ has mass $\updelta_x$ and $[e'', e]$ has a greater mass of $(\updelta_x + A^+)$. Also note that $-a$ is farther from the origin than $e''$ because $R_3 + \updelta_{x_1} + A > \updelta_{x_1} + R_1$. Therefore, we are able to say $a''-a < e-e''$ (again by Proposition \ref{prop:LengthEndpointInequality}) and thus $a''+e'' < e+a$. Taking these inequalities together, we conclude that there is less total perimeter in this new arrangement.\bigskip

\textbf{Case 3: $R_2 < A \leq R_3$.}
First, we will again consolidate the region $A$. However, this time we will move $A^-$ to the positive side of the origin, adjoining it to $A^+$ and shifting $R_3$ to the right. Then, we move $R_3$ to the negative side of the origin (further away from the origin than $R_2$). Note that this will end with our intervals in our hypothesized isoperimetric configuration.

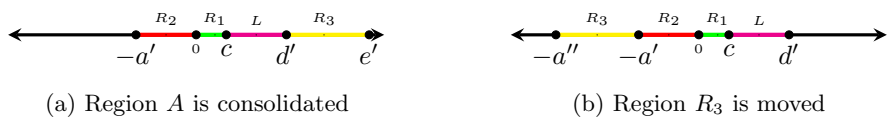
\begin{figure}[H]
     \centering
     \begin{subfigure}[h]{0.45\textwidth}
         \centering
         \begin{tikzpicture}
    \draw[stealth-stealth,very thick] (-2.5,0) -- (2.5,0); 
    \filldraw[green,ultra thick] (0,0) -- (.4,0);
    \filldraw[magenta,ultra thick] (.4,0) -- (1.2,0);
    \filldraw[yellow,ultra thick] (1.2,0) -- (2.3,0);
    \filldraw[red,ultra thick] (-.8,0) -- (0,0);
    \filldraw (-.8,0) circle (1.5pt) node[anchor=north] {$-a'$};
    \filldraw (0,0) circle (1.5pt) node[anchor=north] {\tiny$0$};
    \filldraw (.4,0) circle (1.5pt) node[anchor=north] {$c$};
    \filldraw (1.2,0) circle (1.5pt) node[anchor=north] {$d'$};
    \filldraw (2.3,0) circle (1.5pt) node[anchor=north] {$e'$};
    \filldraw (-.4,0) circle (.05pt) node[anchor=south] {\tiny$R_2$};
    \filldraw (.8,0) circle (.05pt) node[anchor=south] {\tiny$L$};
    \filldraw (.25,0) circle (.05pt) node[anchor=south] {\tiny$R_1$};
    \filldraw (1.7,0) circle (.05pt) node[anchor=south] {\tiny$R_3$};

    \end{tikzpicture}
         \caption{Region $A$ is consolidated}
         \label{fig:largeConsolidated}
     \end{subfigure}
     \hfill
     \begin{subfigure}[h]{0.45\textwidth}
         \centering
         \begin{tikzpicture}
    \draw[stealth-stealth,very thick] (-2.5,0) -- (2.5,0); 
    \filldraw[green,ultra thick] (0,0) -- (.4,0);
    \filldraw[magenta,ultra thick] (.4,0) -- (1.2,0);
    \filldraw[yellow,ultra thick] (-1.9,0) -- (-.8,0);
    \filldraw[red,ultra thick] (-.8,0) -- (0,0);
    \filldraw (-1.9,0) circle (1.5pt) node[anchor=north] {$-a''$};
    \filldraw (-.8,0) circle (1.5pt) node[anchor=north] {$-a'$};
    \filldraw (0,0) circle (1.5pt) node[anchor=north] {\tiny$0$};
    \filldraw (.4,0) circle (1.5pt) node[anchor=north] {$c$};
    \filldraw (1.2,0) circle (1.5pt) node[anchor=north] {$d'$};
    \filldraw (-1.35,0) circle (.05pt) node[anchor=south] {\tiny$R_3$};
    \filldraw (-.4,0) circle (.05pt) node[anchor=south] {\tiny$R_2$};
    \filldraw (.25,0) circle (.05pt) node[anchor=south] {\tiny$R_1$};
    \filldraw (.8,0) circle (.05pt) node[anchor=south] {\tiny$L$};
    \end{tikzpicture}
         \caption{Region $R_3$ is moved}
         \label{fig:largeFinal}
     \end{subfigure}
      \caption{Case 3: Initial rearrangements}
        \label{fig:initSteps2}
\end{figure}

\bigskip

To keep track of our old and new endpoints, we have the following figure. Note that the new total perimeter will consist of $a''+a'+c+d'$. Also, note that we are defining new masses $\updelta_{x_2} = R_3-A^-$ and $\updelta_{x_3} = R_2-A^-$.

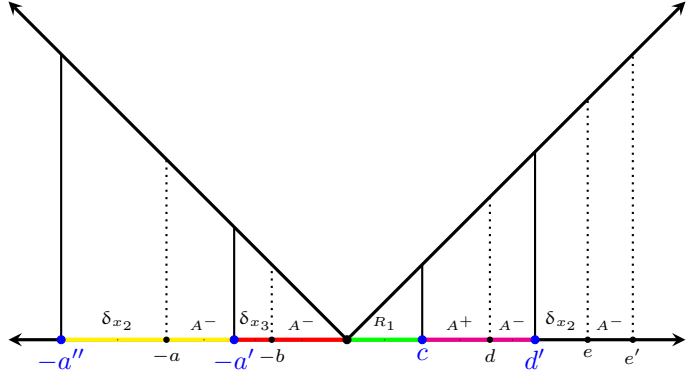
\begin{figure}[H]
    \centering
    \begin{tikzpicture}
    \draw[stealth-,very thick] (-4.5,4.5) -- (0,0);
    \draw[-stealth,very thick](0,0) -- (4.5,4.5);
    \draw[stealth-stealth,very thick] (-4.5,0) -- (4.5,0); 
    \filldraw[green,ultra thick] (0,0) -- (1,0);
    \filldraw[red,ultra thick] (0,0) -- (-1.5,0);
    \filldraw[yellow,ultra thick] (-1.5,0) -- (-3.8,0);
    \filldraw[magenta,ultra thick] (1,0) -- (2.5,0);
    \draw[thick](-3.8,0) -- (-3.8,3.8);
    \draw[dotted,thick] (-2.4,0) -- (-2.4,2.4);
    \draw[thick] (-1.5,0) -- (-1.5,1.5);
    \draw[dotted,thick] (-1,0) -- (-1,1);
    \draw[thick] (1,0) -- (1,1);
    \draw[dotted,thick] (1.9,0) -- (1.9,1.9);
    \draw[thick] (2.5,0) -- (2.5,2.5);
    \draw[dotted,thick] (3.2,0) -- (3.2,3.2);
    \draw[dotted,thick] (3.8,0) -- (3.8,3.8);

    \filldraw [blue](-3.8,0) circle (1.5pt) node[anchor=north] {$-a''$};
    \filldraw [blue](-1.5,0) circle (1.5pt) node[anchor=north] {$-a'$};
    \filldraw (-2.4,0) circle (1pt) node[anchor=north] {\scriptsize $-a$};
    \filldraw (-1,0) circle (1pt) node[anchor=north] {\scriptsize $-b$};
    \filldraw (0,0) circle (1.5pt) node[anchor=north] {\scriptsize};
    \filldraw [blue](1,0) circle (1.5pt) node[anchor=north] {$c$};
    \filldraw [blue](2.5,0) circle (1.5pt) node[anchor=north] {$d'$};
    \filldraw (1.9,0) circle (1pt) node[anchor=north] {\scriptsize$d$};
    \filldraw (3.2,0) circle (1pt) node[anchor=north] {\scriptsize$e$};
    \filldraw (3.8,0) circle (1pt) node[anchor=north] {\scriptsize$e'$};
    
    
    \filldraw (-3.05,0) circle (.05pt) node[anchor=south] {\scriptsize$\updelta_{x_2}$};
    \filldraw (-1.9,0) circle (.05pt) node[anchor=south] {\tiny$A^-$};
    \filldraw (-1.22,0) circle (.05pt) node[anchor=south] {\scriptsize$\updelta_{x_3}$};
    \filldraw (-.6,0) circle (.05pt) node[anchor=south] {\tiny$A^-$};
    \filldraw (.5,0) circle (.05pt) node[anchor=south] {\tiny$R_1$};
    \filldraw (1.5,0) circle (.05pt) node[anchor=south] {\tiny$A^+$};
    \filldraw (2.2,0) circle (.05pt) node[anchor=south] {\tiny$A^-$};
    \filldraw (2.85,0) circle (.05pt) node[anchor=south] {\scriptsize$\updelta_{x_2}$};
    \filldraw (3.5,0) circle (.05pt) node[anchor=south] {\tiny$A^-$};

    \end{tikzpicture}
    \caption{Case 3: Masses and endpoints labeled}
    \label{fig:largeHistory}
\end{figure}

We want to examine the relationship between the new and old perimeter points. First, note that because $A \geq R_2$, we have the following two inequalities.


\begin{align}
    R_1+A &\geq R_2\\
    R_1+A^+ &\geq R_2-A^- \label{eqn:star1}
\end{align}

There are two sub-cases to consider:

\begin{enumerate}
    \item $R_1+A^+ \leq R_2$:
    
    $[-a'',-a]$ and $[d',e]$ have mass $R_3-A^-$. However, in this case, $a' > d$, so $a > d'$. This implies that $[-a'',-a]$ is narrower than $[d',e]$ by Proposition \ref{prop:LengthEndpointInequality}, giving us $a''- a \leq e-d'$, and therefore $a''+d' \leq e+a$. By \eqref{eqn:star1}, $[0,d]$ contains more mass than $[-a',-b]$. In addition, $[0,d]$ has an inner endpoint that is closer to the origin than $[-a',-b]$, meaning $[-a',-b]$ is narrower than $[0,d]$. This allows us to conclude that $a'-b < d-0$ and therefore, $a'<b+d$.
    
    \item $R_1+A^+ > R_2$: 
    
    $[-a,-a']$ and $[d,d']$ both have mass $A^-$. In this case $d>a'$, so $[-a,-a']$ is not as narrow as $[d,d']$. Again using Proposition \ref{prop:LengthEndpointInequality}, we can say that $d'-d \leq a-a'$, which gives that $a'+d' \leq a+d$. Additionally, $[a'',-b]$ has mass no greater than $[0,e]$ (which is true since $R_2-A^- \leq R_1 + A^+$ by \eqref{eqn:star1}), giving us that $[a'',-b]$ is narrower. This allows us to conclude that $a''-b \leq e-0$, and therefore that $a''< b+e$.
\end{enumerate}

In each sub-case, our new endpoints contribute less perimeter than before, yielding an isoperimetric arrangement of four intervals.
\bigskip

\textbf{Case 4: $A > R_3$}. As before, we consolidate $A$ into a single interval. This time, we do so on the negative side of the origin. Then, we transpose the $A$ and the $R_2$ interval. The final arrangement has a new total perimeter of $a'+b''+c+e'$, which we compare to the old perimeter of $a+b+c+d+e$.

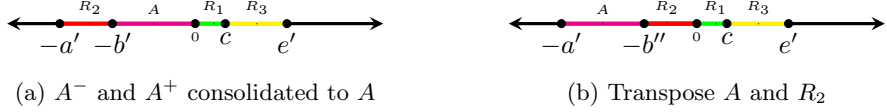
\begin{figure}[H]
     \centering
     \begin{subfigure}[h]{0.45\textwidth}
         \centering
         \begin{tikzpicture}
    \draw[stealth-stealth,very thick] (-2.5,0) -- (2.5,0); 
    \filldraw[green,ultra thick] (0,0) -- (.4,0);
    \filldraw[magenta,ultra thick] (-1.1,0) -- (0,0);
    \filldraw[yellow,ultra thick] (.4,0) -- (1.22,0);
    \filldraw[red,ultra thick] (-1.1,0) -- (-1.8,0);
    \filldraw (-1.8,0) circle (1.5pt) node[anchor=north] {$-a'$};
    \filldraw (-1.1,0) circle (1.5pt) node[anchor=north] {$-b'$};
    \filldraw (0,0) circle (1.5pt) node[anchor=north] {\tiny$0$};
    \filldraw (.4,0) circle (1.5pt) node[anchor=north] {$c$};
    \filldraw (1.22,0) circle (1.5pt) node[anchor=north] {$e'$};
    
    \filldraw (-1.4,0) circle (.05pt) node[anchor=south] {\tiny$R_2$};
    \filldraw (-.55,0) circle (.05pt) node[anchor=south] {\tiny$A$};
    \filldraw (.25,0) circle (.05pt) node[anchor=south] {\tiny$R_1$};
    \filldraw (.82,0) circle (.05pt) node[anchor=south] {\tiny$R_3$};
    \end{tikzpicture}
         \caption{$A^-$ and $A^+$ consolidated to $A$ }
         \label{fig:largestConsolidated}
     \end{subfigure}
     \hfill
     \begin{subfigure}[h]{0.45\textwidth}
         \centering
         \begin{tikzpicture}
    \draw[stealth-stealth,very thick] (-2.5,0) -- (2.5,0); 
    \filldraw[green,ultra thick] (0,0) -- (.4,0);
    \filldraw[magenta,ultra thick] (-1.8,0) -- (0,0);
    \filldraw[yellow,ultra thick] (.4,0) -- (1.22,0);
    \filldraw[red,ultra thick] (-.7,0) -- (0,0);
    \filldraw (-1.8,0) circle (1.5pt) node[anchor=north] {$-a'$};
    \filldraw (-.7,0) circle (1.5pt) node[anchor=north] {$-b''$};
    \filldraw (0,0) circle (1.5pt) node[anchor=north] {\tiny$0$};
    \filldraw (.4,0) circle (1.5pt) node[anchor=north] {$c$};
    \filldraw (1.22,0) circle (1.5pt) node[anchor=north] {$e'$};
    
    \filldraw (-1.25,0) circle (.05pt) node[anchor=south] {\tiny$A$};
    \filldraw (-.4,0) circle (.05pt) node[anchor=south] {\tiny$R_2$};
    \filldraw (.25,0) circle (.05pt) node[anchor=south] {\tiny$R_1$};
    \filldraw (.82,0) circle (.05pt) node[anchor=south] {\tiny$R_3$};
    \end{tikzpicture}
         \caption{Transpose $A$ and $R_2$}
         \label{fig:largestFinal}
     \end{subfigure}
      \caption{Case 4: Initial rearrangements}
        \label{fig:initSteps3}
\end{figure}

Like before, we can keep track of all of the changes we have made with Figure \ref{fig:largestHistory}, which has all endpoints and masses identified. As part of this image, we continue to let $\updelta_{x_3} = R_2 - A^-$ introduce mass $\updelta_{x_4} = R_3-A^+$

\begin{figure}[H]
    \centering
    \begin{tikzpicture}
    \draw[stealth-,very thick] (-4.5,4.5) -- (0,0);
    \draw[-stealth,very thick](0,0) -- (4.5,4.5);
    \draw[stealth-stealth,very thick] (-4.5,0) -- (4.5,0); 
    \filldraw[green,ultra thick] (0,0) -- (1,0);
    \filldraw[red,ultra thick] (0,0) -- (-1.5,0);
    \filldraw[magenta,ultra thick] (-1.5,0) -- (-3.8,0);
    \filldraw[yellow,ultra thick] (1,0) -- (2.5,0);
    \draw[thick](-3.8,0) -- (-3.8,3.8);
    \draw[dotted,thick] (-3.1,0) -- (-3.1,3.1);
    \draw[dotted,thick] (-2.5,0) -- (-2.5,2.5);
    \draw[thick] (-1.5,0) -- (-1.5,1.5);
    \draw[dotted,thick] (-1,0) -- (-1,1);
    \draw[thick] (1,0) -- (1,1);
    \draw[dotted,thick] (1.9,0) -- (1.9,1.9);
    \draw[thick] (2.5,0) -- (2.5,2.5);
    \draw[dotted,thick] (3.2,0) -- (3.2,3.2);

    \filldraw [blue](-3.8,0) circle (1.5pt) node[anchor=north] {$-a'$};
    \filldraw [blue](-1.5,0) circle (1.5pt) node[anchor=north] {$-b''$};
    \filldraw (-3.1,0) circle (1pt) node[anchor=north] {\scriptsize $-a$};
    \filldraw (-2.5,0) circle (1pt) node[anchor=north] {\scriptsize $-b'$};
    \filldraw (-1,0) circle (1pt) node[anchor=north] {\scriptsize $-b$};
    \filldraw (0,0) circle (1.5pt) node[anchor=north] {\scriptsize};
    \filldraw [blue](1,0) circle (1.5pt) node[anchor=north] {$c$};
    \filldraw [blue](2.5,0) circle (1.5pt) node[anchor=north] {$e'$};
    \filldraw (1.9,0) circle (1pt) node[anchor=north] {\scriptsize$d$};
    \filldraw (3.2,0) circle (1pt) node[anchor=north] {\scriptsize$e$};
    
    \filldraw (-3.4,0) circle (.05pt) node[anchor=south] {\tiny$A^+$};
    \filldraw (-2.75,0) circle (.05pt) node[anchor=south] {\scriptsize$\updelta_{x_1}$};
    \filldraw (-2,0) circle (.05pt) node[anchor=south] {\tiny$A-R_2$};
    \filldraw (2.2,0) circle (.05pt) node[anchor=south] {\tiny{$\updelta_{x_4}$}};
    \filldraw (2.85,0) circle (.05pt) node[anchor=south] {\tiny$A^+$};
    \filldraw (-1.2,0) circle (.05pt) node[anchor=south] {\scriptsize$\updelta_{x_3}$};
    \filldraw (-.6,0) circle (.05pt) node[anchor=south] {\tiny$A^-$};
    \filldraw (.5,0) circle (.05pt) node[anchor=south] {\tiny$R_1$};
    \filldraw (1.5,0) circle (.05pt) node[anchor=south] {\tiny$A^+$};

    \end{tikzpicture}
    \caption{Case 4: Masses and endpoints labeled}
    \label{fig:largestHistory}
\end{figure}
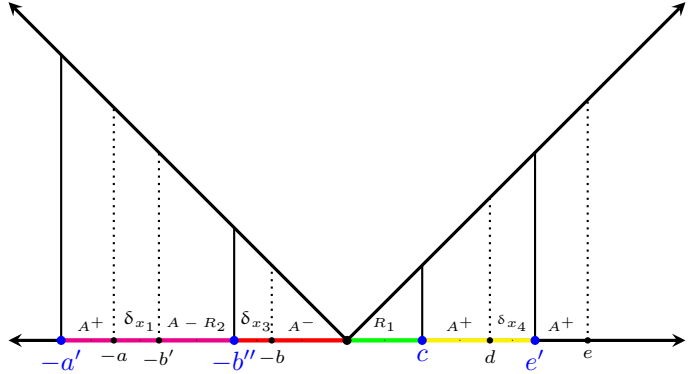

There are two sub-cases to consider:
\begin{enumerate}
    \item $R_2+A^- \geq R_1+R_3$:
    
    The interval $[-a',-a]$ has mass $A^+$  and is separated from the origin by an interval of mass $R_2+A^-$. $[e',e]$ has mass $A^+$ as well, but is separated from the origin by an interval of mass $R_1 + R_3$ away from the origin. We conclude $a'-a \leq e-e'$ and therefore that $a'+e' \leq e+a$. 
    
    Next, we want to show that $b'' < b+d$. Now, because it is true that $A \geq R_3 \geq R_2 \geq R_1$, we see that 
    $A^- + A^+ > R_2-R_1$ is certainly true. Rearranging, we can have $R_1+A^+ > R_2 - A^-$. This is important because $[-b'',-b]$ has mass $R_2-A^-$ and $[0,d]$ has mass $R_1+A^+$. The interval $[-b'',-b]$ has a smaller mass and is farther away from the origin, so by Proposition \ref{prop:LengthEndpointInequality} we have that $b''-b < d-0$, and therefore $b''<d+b$. 
    
    Thus, there is less total perimeter in this new configuration. 
    
    \item $R_2+ A^- < R_1+R_3$:
    
    The interval $[d,e']$ has mass $R_3-A^+$ and is separated from the origin by an interval of mass $R_1+A^+$. At the same time, $[-a,-b'']$ has mass $A^-$ and is separated from the origin by an interval of mass $R_2$. We know that $A \geq R_3 \geq R_2 \geq R_1$, and therefore $A^- \geq R_3-A^+$. Thus, the interval $[-a,-b'']$ has less mass than $[d,e']$. Additionally, $b''$ is closer to the origin than $d$. Applying Proposition \ref{prop:LengthEndpointInequality} once more gives us $a < e'$ and that $a-b'' > e'-d$, and therefore that $a+d > b''+e'$. 
    
    Lastly, we want to examine the endpoint $a'$. We begin by noting that the interval $[-a',-b]$ has a mass of $R_2+A^+$ while $[0,e]$ has mass $R_1+R_3+A^+$. By the relative size of our masses, we know $R_2+A^+ \leq R_1+R_3+A^+$. So, not only does $[0,e]$ have the larger mass than $[-a', -b]$, but it is also closer to the origin. Therefore, we can say that $a'-b \leq e-0$ and thus $a' \leq e+b$. 
    
    Taking these earlier inequalities together, we see that in this case the new configuration has less total perimeter.
    \end{enumerate}
    
    We observe that, in any possible case or sub-case, our configuration of 4 intervals uses less perimeter than any possible candidate with five intervals. Therefore, our proposed 4-interval solution is the perimeter-minimizing way to arrange four regions on $\mathbb{R}$ with density $|x|$.
\end{proof}

\section{Appendix: The ordering propositions}

In this appendix, we collect theorems that identify appropriate orderings of intervals. Specifically, we establish the ordering of $n$-interval frameworks, for $n=4,5,6$, where the masses come from distinct intervals and have masses $M_1 \leq \dots \leq M_n$. It turns out that the $n=4$ and $n=5$ cases can be seen as corollaries of the $n=6$ case, in the specific situation where one (or two) of the intervals have size 0. Thus, in this appendix we prove the 6-interval framework and state the 4- and 5-interval propositions as corollaries.

\begin{proposition}[Alternating 4-interval framework]
\label{Prop:4IntervalFramework}
On $\mathbb{R}$ with density $|x|$: Suppose we have four regions, condensed so that each region consists of a single interval, and organized so that there are two intervals on each side of the origin. Then the least-perimeter way to meet these specifications is to have intervals alternate back and forth across the origin as they increase from smallest mass to largest. This means that the masses can be ordered, up to reflection, as $R_3 R_1 . R_2 R_4$ (where the masses of region $R_i$ satisfy $M_1 \leq M_2 \leq M_3 \leq M_4$). 
\end{proposition}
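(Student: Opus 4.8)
The plan is to collapse the continuous problem into a comparison of three explicit numbers and then decide it with the concavity of the square root. First I would cut the search space down to three candidates. By the Transposition Lemma (Proposition~\ref{prop:TranspositionLemma}), on each side of the origin the interval carrying the smaller mass must sit nearer the origin, since otherwise a transposition strictly lowers perimeter. Thus, once we choose which two of the four masses occupy a given side, the order within that side is forced, smaller mass inward. Using the reflection symmetry of the density $|x|$, the only remaining freedom is the partition of $\{M_1,M_2,M_3,M_4\}$ into two unordered pairs, and there are exactly three: the ``low/high'' split $\{M_1,M_2\}\mid\{M_3,M_4\}$, the alternating split $\{M_1,M_3\}\mid\{M_2,M_4\}$, and the ``extremes'' split $\{M_1,M_4\}\mid\{M_2,M_3\}$. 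The claim is exactly that the alternating split is perimeter-minimizing.

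Next I would record a closed form for the perimeter. Since the density is precisely $|x|$, the mass between the origin and a point at distance $x$ equals $x^2/2$, so an endpoint a cumulative mass $m$ from the origin sits at distance $\sqrt{2m}$. On a side whose inner interval has mass $m_{\mathrm{in}}$ and whose intervals have total mass $T$, the shared interior endpoint lies at $\sqrt{2m_{\mathrm{in}}}$ and the outer endpoint at $\sqrt{2T}$; as the origin contributes $|0|=0$ and each interior endpoint is counted once, the total perimeter is
\[
P=\sqrt{2}\left(\sqrt{m_{\mathrm{in}}^{-}}+\sqrt{T^{-}}+\sqrt{m_{\mathrm{in}}^{+}}+\sqrt{T^{+}}\right).
\]
Feeding the three splits into this formula produces three perimeters $P_A$ (low/high), $P_B$ (alternating), $P_C$ (extremes), each a sum of square roots of single masses and of pairwise sums.

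Finally I would compare the three by concavity of $t\mapsto\sqrt{t}$. The candidates $P_B$ and $P_C$ share the inner contribution $\sqrt{M_1}+\sqrt{M_2}$, so only the total terms matter; since the pairs $(M_1{+}M_4,\,M_2{+}M_3)$ and $(M_1{+}M_3,\,M_2{+}M_4)$ have the same sum while $|(M_1{+}M_4)-(M_2{+}M_3)|\le|(M_1{+}M_3)-(M_2{+}M_4)|$, the more balanced $C$-pair has the larger root-sum, giving $P_B\le P_C$. The comparison $P_B\le P_A$ is the delicate step, and I expect it to be the main obstacle: here the inner term penalizes $A$ (it carries $\sqrt{M_3}$ in place of $\sqrt{M_2}$) while its total term rewards $A$, so the two effects pull in opposite directions and no single monotonicity settles it. I would resolve this by introducing $h(s)=\sqrt{M_3+s}-\sqrt{M_2+s}$, which is nonnegative because $M_3\ge M_2$ and decreasing in $s$, and then rewriting the difference as $(P_A-P_B)/\sqrt{2}=h(0)+h(M_4)-h(M_1)$. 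Nonnegativity gives $h(0)+h(M_4)\ge h(0)$, and monotonicity with $M_1\ge 0$ gives $h(0)\ge h(M_1)$, so the difference is nonnegative and $P_B\le P_A$. Together these show the alternating order $R_3R_1.R_2R_4$ minimizes perimeter; as a sanity check, every inequality above degenerates to equality exactly when $M_2=M_3$, the expected boundary of ties.
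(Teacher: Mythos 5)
Your proof is correct, but it follows a genuinely different route from the paper's. The paper obtains this proposition as a degenerate case of the alternating 6-interval framework (Proposition \ref{Prop:6IntervalFramework}), which it proves by a lengthy case analysis built on the Transposition Lemma and the qualitative Length and Endpoint Inequalities, comparing widths of mass-equal intervals before and after each swap. You instead exploit the fact that the density $|x|$ is explicitly integrable: an endpoint lying at cumulative mass $m$ from the origin sits at $\sqrt{2m}$ and contributes $\sqrt{2m}$ to perimeter, so after the Transposition Lemma forces the within-side ordering, the whole problem collapses to comparing three explicit sums of square roots, settled by concavity of $t\mapsto\sqrt{t}$ (for $P_B$ versus $P_C$, via the majorization of the two total-mass pairs) and by the monotone, nonnegative gap function $h(s)=\sqrt{M_3+s}-\sqrt{M_2+s}$ (for $P_B$ versus $P_A$); I checked the identity $(P_A-P_B)/\sqrt{2}=h(0)+h(M_4)-h(M_1)$ and the reduction to exactly three unordered pair-partitions, and both are right. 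What your approach buys is a short, self-contained, and essentially computation-free-of-cases argument, and it would extend readily to $|x|^p$ or to more intervals per side; what it gives up is generality in the other direction --- it depends on being able to invert mass to position in closed form, whereas the paper's qualitative comparisons reuse the same machinery (Propositions \ref{prop:TranspositionLemma} and \ref{prop:LengthEndpointInequality}) that drives all the other reconfiguration lemmas, and the paper needs the full 6-interval statement elsewhere anyway, so it gets the 4-interval case for free.
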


\begin{proposition}[Alternating 5-interval framework]
\label{Prop:5IntervalFramework}
On $\mathbb{R}$ with density $|x|$: Suppose we have five regions, condensed so that each region consists of a single interval, and organized so that there are two intervals on one side of the origin and three on the other side. Then the least-perimeter way to meet these specifications is to have intervals alternate back and forth across the origin as they increase from smallest mass to largest. This means that the masses can be ordered, up to reflection, as $R_5 R_3 R_1 . R_2 R_4$ (where the masses of region $R_i$ satisfy $M_1 \leq M_2 \leq M_3 \leq M_4 \leq M_5$). 
\end{proposition}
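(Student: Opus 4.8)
The plan is to deduce this proposition as a corollary of the Alternating 6-interval framework rather than re-running a lengthy transposition analysis from scratch. The key observation is that a $5$-interval configuration (two intervals on one side of the origin, three on the other) is exactly a $6$-interval configuration (three on each side) in which one of the six intervals has been assigned mass $0$ and placed at the origin. A zero-mass interval at the origin is a degenerate interval $\{0\}$ contributing $f(0)=0$ to the total perimeter, so inserting or deleting it changes neither the masses of the genuine regions nor the total perimeter. This yields a perimeter-preserving correspondence between the class $\mathcal{C}_5$ of admissible $5$-interval configurations of masses $M_1\le\cdots\le M_5$ and the subclass of $6$-interval configurations of masses $0\le M_1\le\cdots\le M_5$ whose zero-mass interval is adjacent to the origin. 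This is precisely the ``size $0$'' reduction the paper flags in the appendix preamble.

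First I would fix the correspondence precisely: given a $5$-interval configuration whose short (two-interval) side borders the origin at some interval $I$, insert the degenerate interval $\{0\}$ between the origin and $I$, producing a legitimate $6$-interval configuration with three intervals on each side and the zero-mass interval adjacent to the origin; conversely, deleting a zero-mass interval sitting at the origin recovers a $5$-interval configuration. Next I would apply the Alternating $6$-interval framework to the six masses $0\le M_1\le\cdots\le M_5$. Relabelling these in increasing order as $\mu_1\le\cdots\le\mu_6$, the framework asserts that the perimeter-minimizer is the alternating ordering $\mu_5\,\mu_3\,\mu_1\,.\,\mu_2\,\mu_4\,\mu_6$, in which the two smallest masses sit adjacent to the origin on opposite sides and the rest alternate outward. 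Since $\mu_1=0$ is the smallest, the zero-mass interval lands adjacent to the origin; substituting $\mu_1=0,\mu_2=M_1,\dots,\mu_6=M_5$ gives, up to reflection, $M_4\,M_2\,(0)\,.\,M_1\,M_3\,M_5$.

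It remains to transfer optimality back to the $5$-interval problem. Because the correspondence is perimeter-preserving, minimizing over $\mathcal{C}_5$ is the same as minimizing over the subclass of $6$-interval configurations whose zero-mass interval is at the origin. The framework supplies the minimizer over \emph{all} $6$-interval configurations, and that minimizer lies in our subclass precisely because the zero mass is smallest and therefore sits at the origin. Hence it is also the minimizer over the subclass, so deleting its degenerate interval produces the minimizing $5$-interval configuration. Removing the zero from $M_4\,M_2\,(0)\,.\,M_1\,M_3\,M_5$ leaves $M_4\,M_2\,.\,M_1\,M_3\,M_5$, which after reflection is exactly $R_5\,R_3\,R_1\,.\,R_2\,R_4$, as claimed.

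The main obstacle is not any single estimate but the bookkeeping needed to make the degenerate-interval correspondence airtight: I must confirm that a zero-mass interval genuinely contributes no perimeter and can be freely inserted or deleted at the origin, that this insertion produces the correct side-counts (converting the two-interval side into a three-interval side so the $6$-interval framework applies), and that the global $6$-interval minimizer really places the zero mass at the origin rather than elsewhere. (One could instead prove the proposition directly by iterated transposition and length--endpoint comparisons in the spirit of the 4-Bubble Lemma, but this duplicates the casework already absorbed into the $6$-interval framework and is strictly more laborious.)
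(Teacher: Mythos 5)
Your proposal is correct and follows exactly the route the paper itself takes: the appendix preamble explicitly reduces the $4$- and $5$-interval frameworks to the $6$-interval framework by letting one (or two) of the intervals have mass $0$, and your degenerate-interval bookkeeping (zero perimeter contribution at the origin, the zero mass landing adjacent to the origin as the smallest mass, and the substitution $\mu_1=0,\ \mu_{i+1}=M_i$ yielding $R_5R_3R_1.R_2R_4$ after reflection) is precisely the verification the paper leaves implicit when it states the proposition as a corollary.
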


As said above, these are corollaries of the  6-interval framework proposition, which we state and prove below.


\begin{proposition}[Alternating 6-interval framework]
\label{Prop:6IntervalFramework}
On $\mathbb{R}$ with density $|x|$: Suppose we have six regions, condensed so that each region consists of a single interval, and organized so that there are three intervals on each side of the origin. Then the least-perimeter way to meet these specifications is to have intervals alternate back and forth across the origin as they increase from smallest mass to largest. This means that the masses can be ordered, up to reflection, as $ R_5 R_3 R_1 . R_2 R_4 R_6$ (where the masses of region $R_i$ satisfy $M_1 \leq \dots \leq M_6$). 
\end{proposition}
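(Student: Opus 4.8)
The plan is to first pin down the structure forced by our earlier lemmas and then reduce the claim to a finite comparison of ways to distribute mass across the origin. By Proposition~\ref{prop:TranspositionLemma}, on each side of the origin the three intervals must already be ordered by mass, increasing as we move outward; otherwise a single transposition lowers the perimeter. The only remaining freedom is therefore how to partition the six masses $M_1 \le \dots \le M_6$ into a left triple and a right triple. Because the density is $|x|$, an endpoint enclosing cumulative mass $m$ between itself and the origin sits at distance $\sqrt{2m}$, so if the left masses (inner to outer) are $a_1 \le a_2 \le a_3$ and the right masses are $b_1 \le b_2 \le b_3$, the total perimeter is
\begin{equation}
P = \sqrt{2}\left( \sqrt{a_1} + \sqrt{a_1+a_2} + \sqrt{a_1+a_2+a_3} + \sqrt{b_1} + \sqrt{b_1+b_2} + \sqrt{b_1+b_2+b_3} \right).
\end{equation}
The goal is then to show that the interleaved partition $\{a_1,a_2,a_3\} = \{M_1,M_3,M_5\}$ and $\{b_1,b_2,b_3\} = \{M_2,M_4,M_6\}$ (up to reflection) minimizes $P$.

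The engine of the argument is a cross-origin transposition. Starting from any non-interleaved partition, I would exhibit two intervals on opposite sides of the origin whose masses can be exchanged --- re-sliding the outer intervals inward to close the resulting gap, exactly as in Proposition~\ref{prop:OuterMassStealing} --- in such a way that the perimeter does not increase while the partition moves strictly closer to the interleaved one. After cancelling the endpoints that do not move, each such exchange reduces to comparing two intervals of equal mass whose inner endpoints sit at different distances from the origin, which is precisely the situation controlled by Proposition~\ref{prop:LengthEndpointInequality} and its corollary. I would carry this out in the natural order: (i)~the two innermost intervals must carry $M_1$ and $M_2$; (ii)~the two outermost must carry $M_5$ and $M_6$; (iii)~the two middle intervals then carry $M_3$ and $M_4$; and (iv)~the side holding $M_1$ must also take $M_3$ and $M_5$, placing the smaller member of each subsequent pair on the same side as the smallest mass. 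Since there are only finitely many partitions and every exchange strictly reduces the number of inversions relative to the interleaved target, iterating must terminate at the interleaved configuration, which by construction admits no improving exchange.

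The hard part is the bookkeeping in step~(iv) and the sheer number of positional cases. Each exchange branches according to which side each moved interval starts on, which of the inner, middle, or outer slots it occupies, and the sign of the comparison between the total masses separating the two moved intervals from the origin --- the same branching that appears in Cases~1--4 of the 4-Bubble Theorem, but multiplied by the extra interval on each side. No individual branch is difficult: after the unchanged endpoints cancel, each reduces to an inequality of the form $\sqrt{p+\delta} - \sqrt{p} \ge \sqrt{q+\delta} - \sqrt{q}$ for suitable $p \le q$, which is just the length and endpoint inequality restated (equal mass added closer to the origin lengthens the interval more). The labor is in enumerating and verifying every branch, which is exactly why this verification belongs in the appendix rather than the main text. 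Finally, the 5- and 4-interval frameworks follow as corollaries by regarding such a configuration as a six-region one in which the smallest mass is $0$: setting $M_1 = 0$ degenerates the innermost interval on one side onto the origin, leaving two intervals there and three on the other side, and additionally setting $M_2 = 0$ collapses the other innermost interval as well, recovering the claimed 5- and 4-interval orderings after relabeling.
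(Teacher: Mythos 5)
Your proposal follows essentially the same route as the paper's appendix proof: use Proposition~\ref{prop:TranspositionLemma} to order each side outward by mass, reduce the problem to the choice of partition of $\{M_1,\dots,M_6\}$ into a left triple and a right triple, eliminate every non-interleaved partition by a cross-origin exchange controlled by Proposition~\ref{prop:LengthEndpointInequality}, and recover the 4- and 5-interval statements by setting one or two masses to zero. One genuine addition on your side: the closed form $P = \sqrt{2}\sum_{k}\bigl(\sqrt{a_1+\dots+a_k}+\sqrt{b_1+\dots+b_k}\bigr)$ never appears in the paper, and with it each branch of each exchange collapses to the concavity inequality $\sqrt{p+\delta}-\sqrt{p}\ge\sqrt{q+\delta}-\sqrt{q}$ for $p\le q$; that is arguably cleaner than the paper's repeated qualitative appeals to the length and endpoint inequalities. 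The substantive shortfall is that the content of this proposition \emph{is} the enumeration you defer: the paper works through ten cases (its Cases 0--9, one per partition up to reflection), exhibits a specific swap for each, and explicitly tracks which case each swap lands in to guarantee termination --- and several swaps do not land at the optimum directly (Case 5 goes to Case 4, Case 6 to Case 7, Case 9 to Case 1). Your blanket claim that ``every exchange strictly reduces the number of inversions relative to the interleaved target'' is therefore not a free observation but something that must be verified swap by swap, and your steps (i)--(iv) are an ordering of that work, not a substitute for it. As written, the proposal is a correct and well-organized plan for the proof rather than the proof itself.
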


\begin{proof}[Proof of Proposition 6.3]


This is a proof of many cases. Proposition \ref{prop:TranspositionLemma} tells us that each side of the origin will be ordered from smallest to largest mass (as we move away from the origin). We will identify the regions of smallest mass and second-smallest mass as ``$S$'' and ``$M$'' throughout this proof.

WLOG, we can assume that $S$, appears directly adjacent to the origin on the negative side. The major dividing line in our cases is whether or not $M$, appears on the same side of the origin as $S$. In the case where $M$ appears on the opposite side of the origin, we will label our other regions as having masses $L_1$, $L_2$, $X_1$, and $X_2$, and consider the relative sizes of the $L_i$ and $X_i$. We can use Figure \ref{fig:6IntervalReferencePic1} as a reference.

\begin{figure}[H]
\centering
\begin{tikzpicture}
    \draw[stealth-,very thick] (-2.2,2.2) -- (0,0);
    \draw[-stealth,very thick](0,0) -- (2.2,2.2);
    \draw[stealth-stealth,very thick] (-5,0) -- (5,0);
    \filldraw (-3.7,0) circle (1pt) node[anchor=north] {\tiny $-a_2$};
    \filldraw (-1.9,0) circle (1pt) node[anchor=north] {\tiny$-a_1$};
    \filldraw (-.8,0) circle (1pt) node[anchor=north] {\tiny$-b$};
    \filldraw (0,0) circle (1pt) node[anchor=north] {\tiny$0$};
    \filldraw (1,0) circle (1pt) node[anchor=north] {\tiny$c$};
    \filldraw (2.2,0) circle (1pt) node[anchor=north] {\tiny$d_1$};
    \filldraw (4,0) circle (1pt) node[anchor=north] {\tiny$d_2$};
    \filldraw [blue](-2.8,0) circle (.05pt) node[anchor=south] {\tiny $X_1$};
    \filldraw [blue](-1.3,0) circle (.05pt) node[anchor=south] {\tiny$L_1$};
    \node[anchor=south] at (-.4,0) {\textcolor{blue}{\tiny$S$}};
    \node[anchor=south] at (.6,0) {\textcolor{blue}{\tiny$M$}};
    \filldraw [blue](1.6,0) circle (.05pt) node[anchor=south] {\tiny$L_2$};
    \filldraw [blue](3.1,0) circle (.05pt) node[anchor=south] {\tiny$X_2$};
    \end{tikzpicture}
    
    \caption{The figure we use as an initial configuration when the second-smallest mass, $M$, is on the opposite side of the origin as $S$.}
    \label{fig:6IntervalReferencePic1}
\end{figure}
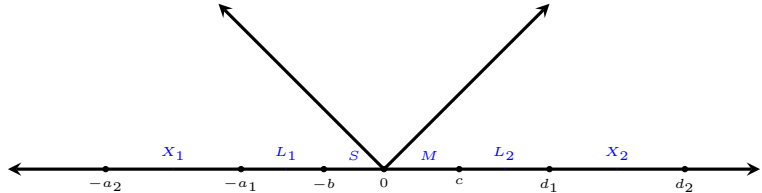
We can assume (by Proposition \ref{prop:TranspositionLemma}) that $L_1 < X_1$ and $L_2 < X_2$, but there are still many possible cases to consider. 
    \begin{itemize}
        \item Case 0: $S \leq M \leq L_1 \leq L_2 \leq X_1 \leq X_2$. This is the case we are claiming is optimal.
        
        \item Case 1: $S \leq M \leq L_1 \leq L_2 \leq X_2 \leq X_1$. In this case, we can further reduce perimeter by flipping the positions of the $X_i$'s. The process is shown in the image below.
        
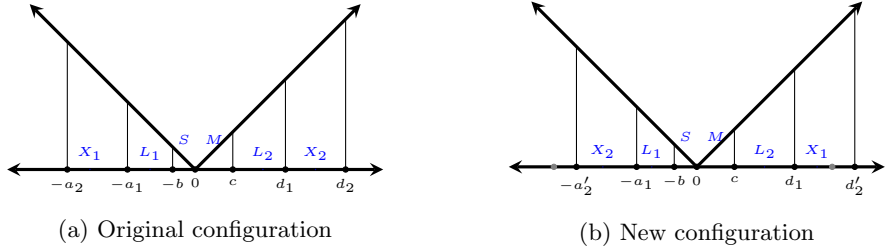
\begin{figure}[H]
    \centering
    \begin{subfigure}[h]{0.45\textwidth}
         \centering
         \begin{tikzpicture}
    \draw[stealth-,very thick] (-2.2,2.2) -- (0,0);
    \draw[-stealth,very thick](0,0) -- (2.2,2.2);
    \draw[stealth-stealth,very thick] (-2.5,0) -- (2.5,0);
    \filldraw (-1.7,0) circle (1pt) node[anchor=north] {\tiny $-a_2$};
    \filldraw (-.9,0) circle (1pt) node[anchor=north] {\tiny$-a_1$};
    \filldraw (-.3,0) circle (1pt) node[anchor=north] {\tiny$-b$};
    \filldraw (0,0) circle (1pt) node[anchor=north] {\tiny$0$};
    \filldraw (.5,0) circle (1pt) node[anchor=north] {\tiny$c$};
    \filldraw (1.2,0) circle (1pt) node[anchor=north] {\tiny$d_1$};
    \filldraw (2,0) circle (1pt) node[anchor=north] {\tiny$d_2$};
    \draw (-1.7,0) -- (-1.7,1.7);
    \draw (-.9,0) -- (-.9,.9);
    \draw (-.3,0) -- (-.3,.3);
    \draw (.5,0) -- (.5,.5);
    \draw (1.2,0) -- (1.2,1.2);
    \draw (2,0) -- (2,2);
    \filldraw [blue](-1.4,0) circle (.05pt) node[anchor=south] {\tiny $X_1$};
    \filldraw [blue](-.6,0) circle (.05pt) node[anchor=south] {\tiny$L_1$};
    \node at (-.15,.4) {\textcolor{blue}{\tiny$S$}};
    \node at (.25,.4) {\textcolor{blue}{\tiny$M$}};
    \filldraw [blue](.9,0) circle (.05pt) node[anchor=south] {\tiny$L_2$};
    \filldraw [blue](1.6,0) circle (.05pt) node[anchor=south] {\tiny$X_2$};
    \end{tikzpicture}
    
    \caption{Original configuration}
    \label{fig:5.3.1.1}
    \end{subfigure}
    \hfill
    \begin{subfigure}[h]{0.45\textwidth}
    \centering
    
    \begin{tikzpicture}
    \draw[stealth-,very thick] (-2.2,2.2) -- (0,0);
    \draw[-stealth,very thick](0,0) -- (2.2,2.2);
    \draw[stealth-stealth,very thick] (-2.5,0) -- (2.5,0);
    \filldraw [gray](-1.9,0) circle (1pt);
    \filldraw (-1.6,0) circle (1pt) node[anchor=north] {\tiny $-a'_2$};
    \filldraw (-.8,0) circle (1pt) node[anchor=north] {\tiny$-a_1$};
    \filldraw (-.3,0) circle (1pt) node[anchor=north] {\tiny$-b$};
    \filldraw (0,0) circle (1pt) node[anchor=north] {\tiny$0$};
    \filldraw (.5,0) circle (1pt) node[anchor=north] {\tiny$c$};
    \filldraw (1.3,0) circle (1pt) node[anchor=north] {\tiny$d_1$};
    \filldraw [gray](1.8,0) circle (1pt);
    \filldraw (2.1,0) circle (1pt) node[anchor=north] {\tiny$d'_2$};
    \draw (-1.6,0) -- (-1.6,1.6);
    \draw (-.8,0) -- (-.8,.8);
    \draw (-.3,0) -- (-.3,.3);
    \draw (.5,0) -- (.5,.5);
    \draw (1.3,0) -- (1.3,1.3);
    \draw (2.1,0) -- (2.1,2.1);
    \filldraw [blue](-1.25,0) circle (.05pt) node[anchor=south] {\tiny $X_2$};
    \filldraw [blue](-.6,0) circle (.05pt) node[anchor=south] {\tiny$L_1$};
    \node at (-.15,.4) {\textcolor{blue}{\tiny$S$}};
    \node at (.25,.4) {\textcolor{blue}{\tiny$M$}};
    \filldraw [blue](.9,0) circle (.05pt) node[anchor=south] {\tiny$L_2$};
    \filldraw [blue](1.6,0) circle (.05pt) node[anchor=south] {\tiny$X_1$};
    \end{tikzpicture}
    \caption{New configuration}
    \label{fig:5.3.2.2}
    \end{subfigure}
    \caption{Case 1: $X_i$'s have flipped their positions.}
    \label{fig:5.3.1.0}
\end{figure}

\begin{figure}[H]
    \centering
    \begin{tikzpicture}
    \draw[stealth-,very thick] (-4,4) -- (0,0);
    \draw[-stealth,very thick](0,0) -- (4,4);
    \draw[stealth-stealth,very thick] (-4.5,0) -- (4.5,0); 
    \draw [dotted,thick] (-3.8,0) -- (-3.8,3.8);
    \draw (-3.1,0) -- (-3.1,3.1);
    \draw (-2.3,0) -- (-2.3,2.3);
    \draw (-1,0) -- (-1,1);
    \draw (1,0) -- (1,1);
    \draw (1.6,0) -- (1.6,1.6);
    \draw [dotted,thick] (2.6,0) -- (2.6,2.6);
    \draw (3.2,0) -- (3.2,3.2);
    \filldraw (-3.8,0) circle (1pt) node[anchor=north] {\scriptsize$-a_2$};
    \filldraw (-3.1,0) circle (1.5pt) node[anchor=north] {\scriptsize $-a'_2$};
    \filldraw (-2.3,0) circle (1pt) node[anchor=north] {\scriptsize $-a_1$};
    \filldraw (-1,0) circle (1.5pt) node[anchor=north] {\scriptsize $-b$};
    \filldraw (0,0) circle (1pt) node[anchor=north] {\scriptsize $0$};
    \filldraw (1,0) circle (1.5pt) node[anchor=north] {\scriptsize$c$};
    \filldraw (1.6,0) circle (1pt) node[anchor=north] {\scriptsize$d_1$};
    \filldraw (2.6,0) circle (1pt) node[anchor=north] {\scriptsize$d_2$};
    \filldraw (3.2,0) circle (1.5pt) node[anchor=north] {\scriptsize$d'_2$};
    \filldraw (-3.4,0) circle (0.5pt) node[anchor=south] {\scriptsize$\delta$};
    \filldraw (-2.75,0) circle (0.5pt) node[anchor=south] {\scriptsize$X_2$};
    \filldraw (-1.4,0) circle (.05pt) node[anchor=south] {\tiny$L_1$};
    \filldraw (-.6,0) circle (.05pt) node[anchor=south] {\tiny$S$};
    \filldraw (.5,0) circle (.05pt) node[anchor=south] {\tiny$M$};
    \filldraw (1.3,0) circle (.05pt) node[anchor=south] {\tiny$L_2$};
    \filldraw (2,0) circle (.05pt) node[anchor=south] {\tiny$X_2$};
    \filldraw (2.8,0) circle (.05pt) node[anchor=south] {\tiny$\delta$};
    \end{tikzpicture}
    \caption{Case 1: all endpoints and masses identified.}
    \label{fig:5.3.1.3}
\end{figure}
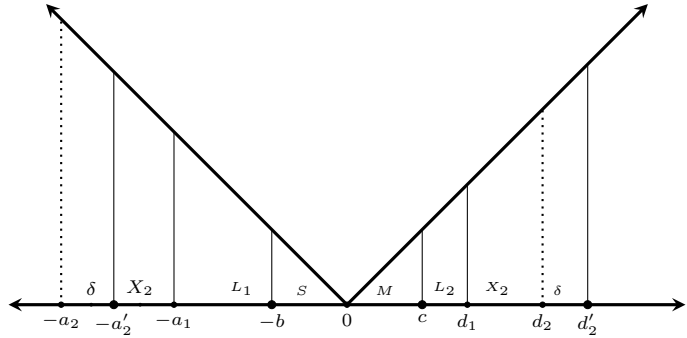

Specifically, we can define $\delta =X_1 - X_2$. Note that the original configuration has perimeter $a_2 + a_1 + b + c + d_1 + d_2$, while the modified configuration has perimeter $a_2' + a_1 + b + c + d_1 + d_2'$. Because $S \leq M$ and $L_1 \leq L_2$, see that the mass in $[-a_2',0]$ is less than the mass in $[0, d_2]$. This tells us that $a_2' < d_2$. This, in turn, implies that the interval $[-a_2, -a_2']$ is not as narrow as the interval $[d_2, d_2']$ since both contain the same mass $\delta$. We conclude that $d_2' - d_2 < a_2 - a_2'$, and therefore that $a_2' + d_2' < a_2 + d_2$. Thus, our new configuration has less total perimeter than our old configuration. This returns us to a framework that is identical to Case 0.

        \item Case 2: $S \leq M \leq L_2 \leq L_1 \leq X_2 \leq X_1$. In this case, we can further reduce perimeter by flipping the positions of the $X$'s and $L$'s simultaneously. The process is shown in the picture below. 
    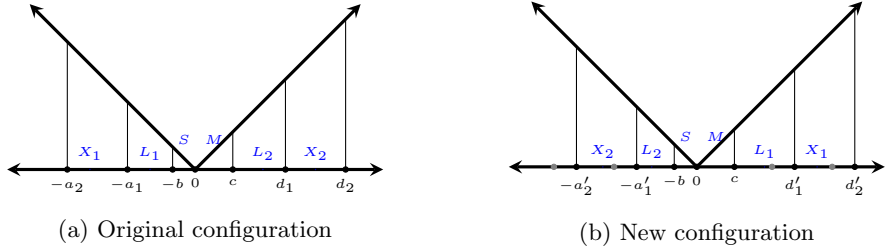
\begin{figure}[H]
    \centering
    \begin{subfigure}[h]{0.45\textwidth}
         \centering
         \begin{tikzpicture}
    \draw[stealth-,very thick] (-2.2,2.2) -- (0,0);
    \draw[-stealth,very thick](0,0) -- (2.2,2.2);
    \draw[stealth-stealth,very thick] (-2.5,0) -- (2.5,0);
    \filldraw (-1.7,0) circle (1pt) node[anchor=north] {\tiny $-a_2$};
    \filldraw (-.9,0) circle (1pt) node[anchor=north] {\tiny$-a_1$};
    \filldraw (-.3,0) circle (1pt) node[anchor=north] {\tiny$-b$};
    \filldraw (0,0) circle (1pt) node[anchor=north] {\tiny$0$};
    \filldraw (.5,0) circle (1pt) node[anchor=north] {\tiny$c$};
    \filldraw (1.2,0) circle (1pt) node[anchor=north] {\tiny$d_1$};
    \filldraw (2,0) circle (1pt) node[anchor=north] {\tiny$d_2$};
    \draw (-1.7,0) -- (-1.7,1.7);
    \draw (-.9,0) -- (-.9,.9);
    \draw (-.3,0) -- (-.3,.3);
    \draw (.5,0) -- (.5,.5);
    \draw (1.2,0) -- (1.2,1.2);
    \draw (2,0) -- (2,2);
    \filldraw [blue](-1.4,0) circle (.05pt) node[anchor=south] {\tiny $X_1$};
    \filldraw [blue](-.6,0) circle (.05pt) node[anchor=south] {\tiny$L_1$};
    \node at (-.15,.4) {\textcolor{blue}{\tiny$S$}};
    \node at (.25,.4) {\textcolor{blue}{\tiny$M$}};
    \filldraw [blue](.9,0) circle (.05pt) node[anchor=south] {\tiny$L_2$};
    \filldraw [blue](1.6,0) circle (.05pt) node[anchor=south] {\tiny$X_2$};
    \end{tikzpicture}
    
    \caption{Original configuration}
    \label{fig:5.3.2.1}
    \end{subfigure}
    \hfill
    \begin{subfigure}[h]{0.45\textwidth}
    \centering
    
    \begin{tikzpicture}
    \draw[stealth-,very thick] (-2.2,2.2) -- (0,0);
    \draw[-stealth,very thick](0,0) -- (2.2,2.2);
    \draw[stealth-stealth,very thick] (-2.5,0) -- (2.5,0);
    \filldraw [gray](-1.9,0) circle (1pt);
    \filldraw (-1.6,0) circle (1pt) node[anchor=north] {\tiny $-a'_2$};
    \filldraw [gray](-1.1,0) circle (1pt);
    \filldraw (-.8,0) circle (1pt) node[anchor=north] {\tiny$-a'_1$};
    \filldraw (-.3,0) circle (1pt) node[anchor=north] {\tiny$-b$};
    \filldraw (0,0) circle (1pt) node[anchor=north] {\tiny$0$};
    \filldraw (.5,0) circle (1pt) node[anchor=north] {\tiny$c$};
    \filldraw [gray](1,0) circle (1pt);   
    \filldraw (1.3,0) circle (1pt) node[anchor=north] {\tiny$d'_1$};
    \filldraw [gray](1.8,0) circle (1pt);
    \filldraw (2.1,0) circle (1pt) node[anchor=north] {\tiny$d'_2$};
    \draw (-1.6,0) -- (-1.6,1.6);
    \draw (-.8,0) -- (-.8,.8);
    \draw (-.3,0) -- (-.3,.3);
    \draw (.5,0) -- (.5,.5);
    \draw (1.3,0) -- (1.3,1.3);
    \draw (2.1,0) -- (2.1,2.1);
    \filldraw [blue](-1.25,0) circle (.05pt) node[anchor=south] {\tiny $X_2$};
    \filldraw [blue](-.6,0) circle (.05pt) node[anchor=south] {\tiny$L_2$};
    \node at (-.15,.4) {\textcolor{blue}{\tiny$S$}};
    \node at (.25,.4) {\textcolor{blue}{\tiny$M$}};
    \filldraw [blue](.9,0) circle (.05pt) node[anchor=south] {\tiny$L_1$};
    \filldraw [blue](1.6,0) circle (.05pt) node[anchor=south] {\tiny$X_1$};
    \end{tikzpicture}
    \caption{New configuration}
    \label{fig:5.3.2.2}
    \end{subfigure}
    \caption{Case 2: $X$'s and $L$'s have flipped position.}
    \label{fig:5.3.2.0}
\end{figure}

\begin{figure}[H]
    \centering
    \begin{tikzpicture}
    \draw[stealth-,very thick] (-4,4) -- (0,0);
    \draw[-stealth,very thick](0,0) -- (4,4);
    \draw[stealth-stealth,very thick] (-4.5,0) -- (4.5,0); 
    \draw [dotted,thick] (-3.8,0) -- (-3.8,3.8);
    \draw (-3.1,0) -- (-3.1,3.1);
    \draw [dotted,thick] (-2.3,0) -- (-2.3,2.3);
    \draw (-1.5,0) -- (-1.5,1.5);
    \draw (-1,0) -- (-1,1);
    \draw (1,0) -- (1,1);
    \draw [dotted,thick] (1.6,0) -- (1.6,1.6);
    \draw (2,0) -- (2,2);
    \draw [dotted,thick] (2.6,0) -- (2.6,2.6);
    \draw (3.2,0) -- (3.2,3.2);
    \filldraw (-3.8,0) circle (1pt) node[anchor=north] {\scriptsize$-a_2$};
    \filldraw (-3.1,0) circle (1.5pt) node[anchor=north] {\scriptsize $-a'_2$};
    \filldraw (-2.3,0) circle (1pt) node[anchor=north] {\scriptsize $-a_1$};
    \filldraw (-1.5,0) circle (1.5pt) node[anchor=north] {\scriptsize$-a'_1$};
    \filldraw (-1,0) circle (1.5pt) node[anchor=north] {\scriptsize $-b$};
    \filldraw (0,0) circle (1pt) node[anchor=north] {\scriptsize $0$};
    \filldraw (1,0) circle (1.5pt) node[anchor=north] {\scriptsize$c$};
    \filldraw (1.6,0) circle (1pt) node[anchor=north] {\scriptsize$d_1$};
    \filldraw (2,0) circle (1.5pt) node[anchor=north] {\scriptsize$d'_1$};
    \filldraw (2.6,0) circle (1pt) node[anchor=north] {\scriptsize$d_2$};
    \filldraw (3.2,0) circle (1.5pt) node[anchor=north] {\scriptsize$d'_2$};
    \node[rotate=90] at (-3.4,1){\scriptsize$\updelta_X+\updelta_L$};
    \node[rotate=90] at (-2.75,1){\tiny{$R_3$}- \scriptsize{$\updelta_L$}};
    \filldraw (-1.2,0) circle (.05pt) node[anchor=south] {\tiny$L_2$};
    \filldraw (-1.9,0) circle (.05pt) node[anchor=south] {\scriptsize$\updelta_L$};
    \filldraw (-.6,0) circle (.05pt) node[anchor=south] {\tiny$S$};
    \filldraw (.5,0) circle (.05pt) node[anchor=south] {\tiny$M$};
    \filldraw (1.3,0) circle (.05pt) node[anchor=south] {\tiny$L_2$};
    \node[rotate=90] at (1.8,.8){\scriptsize$\updelta_L$};
    \node[rotate=90] at (2.3,1){\scriptsize$X_2-\updelta_L$};
    \node[rotate=90] at (2.85,1) {\scriptsize$\updelta_X+\updelta_L$};
    \end{tikzpicture}
    \caption{Case 2: all endpoints and masses identified.}
    \label{fig:5.3.2.3}
\end{figure}
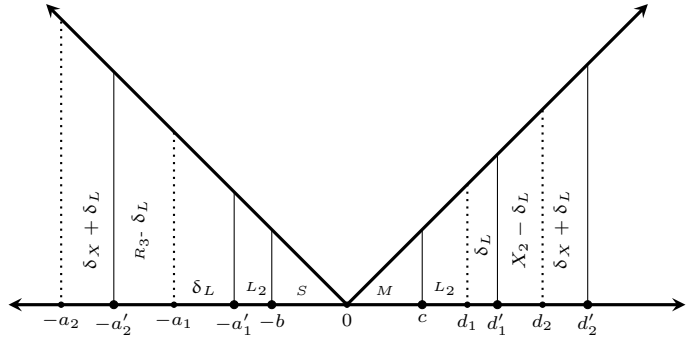
        
        Specifically, we can define $\delta_L = L_1 - L_2$ and $\delta_X = X_1 - X_2$. Note that the original configuration has perimeter $a_2 + a_1 + b + c + d_1 + d_2$, while the modified configuration has perimeter $a_2' + a_1' + b + c + d_1' + d_2'$. The second figure shows all endpoints and masses calculated. We can see that, because $S \leq M$, we get that $a_1' \leq d1$ and therefore the interval $[-a_1, -a_1']$ is no narrower than the interval $[d_1, d_1']$. In other words, $d_1' - d_1 \leq a_1 - a_1'$, and so $a_1' + d_1'\leq a_1 + d_1$. A similar argument tells us that $a_2' + d_2'\leq a_2 + d_2$, and we can conclude that this reconfiguration did not increase the total perimeter. This returns us to a framework that is identical to Case 0.
        
        \item Case 3: $S \leq M \leq L_2 \leq L_1 \leq X_1 \leq X_2$. In this case, we can switch the roles of the $L$'s, leaving the $X$'s in their current position.The process is shown in the picture below.
    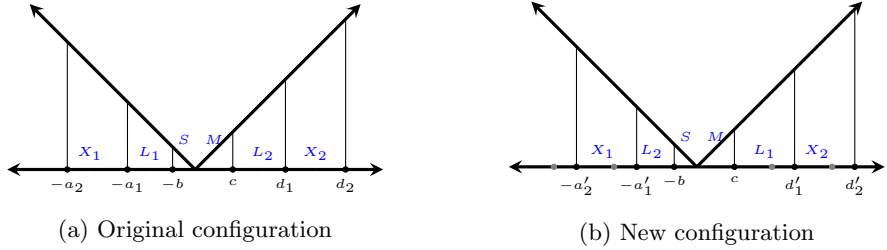
\begin{figure}[H]
    \centering
    \begin{subfigure}[h]{0.45\textwidth} 
    \centering
    \begin{tikzpicture}
         
    \draw[stealth-,very thick] (-2.2,2.2) -- (0,0);
    \draw[-stealth,very thick](0,0) -- (2.2,2.2);
    \draw[stealth-stealth,very thick] (-2.5,0) -- (2.5,0);
    
    \filldraw (-1.7,0) circle (1pt) node[anchor=north] {\tiny $-a_2$};
    \filldraw (-.9,0) circle (1pt) node[anchor=north] {\tiny$-a_1$};
    \filldraw (-.3,0) circle (1pt) node[anchor=north] {\tiny$-b$};
    \filldraw (.5,0) circle (1pt) node[anchor=north] {\tiny$c$};
    \filldraw (1.2,0) circle (1pt) node[anchor=north] {\tiny$d_1$};
    \filldraw (2,0) circle (1pt) node[anchor=north] {\tiny$d_2$};
    
    \draw (-1.7,0) -- (-1.7,1.7);
    \draw (-.9,0) -- (-.9,.9);
    \draw (-.3,0) -- (-.3,.3);
    \draw (.5,0) -- (.5,.5);
    \draw (1.2,0) -- (1.2,1.2);
    \draw (2,0) -- (2,2);
    
    \filldraw [blue](-1.4,0) circle (.05pt) node[anchor=south] {\tiny $X_1$};
    \filldraw [blue](-.6,0) circle (.05pt) node[anchor=south] {\tiny$L_1$};
    \node at (-.15,.4) {\textcolor{blue}{\tiny$S$}};
    \node at (.25,.4) {\textcolor{blue}{\tiny$M$}};
    \filldraw [blue](.9,0) circle (.05pt) node[anchor=south] {\tiny$L_2$};
    \filldraw [blue](1.6,0) circle (.05pt) node[anchor=south] {\tiny$X_2$};
    
    \end{tikzpicture}
    \caption{Original configuration}
    \end{subfigure}
    \hfill
    \begin{subfigure}[h]{0.45\textwidth}
    \centering
    \begin{tikzpicture}
    
    \draw[stealth-,very thick] (-2.2,2.2) -- (0,0);
    \draw[-stealth,very thick](0,0) -- (2.2,2.2);
    \draw[stealth-stealth,very thick] (-2.5,0) -- (2.5,0);
    
    \filldraw [gray](-1.9,0) circle (1pt);
    \filldraw (-1.6,0) circle (1pt) node[anchor=north] {\tiny $-a'_2$};
    \filldraw [gray](-1.1,0) circle (1pt);
    \filldraw (-.8,0) circle (1pt) node[anchor=north] {\tiny$-a'_1$};
    \filldraw (-.3,0) circle (1pt) node[anchor=north] {\tiny$-b$};
    \filldraw (.5,0) circle (1pt) node[anchor=north] {\tiny$c$};
    \filldraw [gray](1,0) circle (1pt);   
    \filldraw (1.3,0) circle (1pt) node[anchor=north] {\tiny$d'_1$};
    \filldraw [gray](1.8,0) circle (1pt);
    \filldraw (2.1,0) circle (1pt) node[anchor=north] {\tiny$d'_2$};
    
    \draw (-1.6,0) -- (-1.6,1.6);
    \draw (-.8,0) -- (-.8,.8);
    \draw (-.3,0) -- (-.3,.3);
    \draw (.5,0) -- (.5,.5);
    \draw (1.3,0) -- (1.3,1.3);
    \draw (2.1,0) -- (2.1,2.1);
    
    \filldraw [blue](-1.25,0) circle (.05pt) node[anchor=south] {\tiny $X_1$};
    \filldraw [blue](-.6,0) circle (.05pt) node[anchor=south] {\tiny$L_2$};
    \node at (-.15,.4) {\textcolor{blue}{\tiny$S$}};
    \node at (.25,.4) {\textcolor{blue}{\tiny$M$}};
    \filldraw [blue](.9,0) circle (.05pt) node[anchor=south] {\tiny$L_1$};
    \filldraw [blue](1.6,0) circle (.05pt) node[anchor=south] {\tiny$X_2$};
    
    \end{tikzpicture}
    \caption{New configuration}
    \label{fig:5.3.3.2}
    \end{subfigure}
    \caption{Case 3: Flip the $L$'s, leaving the $X$'s where they are.}
    \label{fig:5.3.3.0}
    \end{figure}
    
    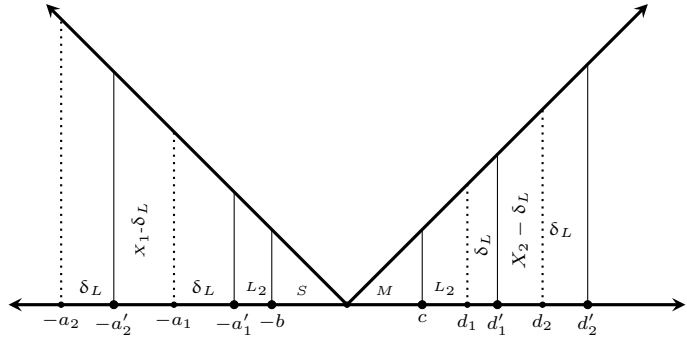
\begin{figure}[H]
    \centering
    \begin{tikzpicture}
    
    \draw[stealth-,very thick] (-4,4) -- (0,0);
    \draw[-stealth,very thick](0,0) -- (4,4);
    \draw[stealth-stealth,very thick] (-4.5,0) -- (4.5,0); 
    
    \draw [dotted,thick] (-3.8,0) -- (-3.8,3.8);
    \draw (-3.1,0) -- (-3.1,3.1);
    \draw [dotted,thick] (-2.3,0) -- (-2.3,2.3);
    \draw (-1.5,0) -- (-1.5,1.5);
    \draw (-1,0) -- (-1,1);
    \draw (1,0) -- (1,1);
    \draw [dotted,thick] (1.6,0) -- (1.6,1.6);
    \draw (2,0) -- (2,2);
    \draw [dotted,thick] (2.6,0) -- (2.6,2.6);
    \draw (3.2,0) -- (3.2,3.2);
    
    \filldraw (-3.8,0) circle (1pt) node[anchor=north] {\scriptsize$-a_2$};
    \filldraw (-3.1,0) circle (1.5pt) node[anchor=north] {\scriptsize $-a'_2$};
    \filldraw (-2.3,0) circle (1pt) node[anchor=north] {\scriptsize $-a_1$};
    \filldraw (-1.5,0) circle (1.5pt) node[anchor=north] {\scriptsize$-a'_1$};
    \filldraw (-1,0) circle (1.5pt) node[anchor=north] {\scriptsize $-b$};
    \filldraw (0,0) circle (1pt);
    \filldraw (1,0) circle (1.5pt) node[anchor=north] {\scriptsize$c$};
    \filldraw (1.6,0) circle (1pt) node[anchor=north] {\scriptsize$d_1$};
    \filldraw (2,0) circle (1.5pt) node[anchor=north] {\scriptsize$d'_1$};
    \filldraw (2.6,0) circle (1pt) node[anchor=north] {\scriptsize$d_2$};
    \filldraw (3.2,0) circle (1.5pt) node[anchor=north] {\scriptsize$d'_2$};
    
    \filldraw (-3.4,0) circle (.05pt) node[anchor=south] {\scriptsize$\updelta_L$};
    \node[rotate=90] at (-2.75,1){\tiny{$X_1$}-\scriptsize{$\updelta_L$}};
    \filldraw (-1.9,0) circle (.05pt) node[anchor=south] {\scriptsize$\updelta_L$};
    \filldraw (-1.2,0) circle (.05pt) node[anchor=south] {\tiny$L_2$};
    \filldraw (-.6,0) circle (.05pt) node[anchor=south] {\tiny$S$};
    \filldraw (.5,0) circle (.05pt) node[anchor=south] {\tiny$M$};
    \filldraw (1.3,0) circle (.05pt) node[anchor=south] {\tiny$L_2$};
    \node[rotate=90] at (1.8,.8){\scriptsize$\updelta_L$};
    \node[rotate=90] at (2.3,1){\scriptsize$X_2-\updelta_L$};
    \node[rotate=0] at (2.85,1) {\scriptsize$\updelta_L$}; 
    
    \end{tikzpicture}
    \caption{Case 3: All endpoints and masses identified.}
    \label{fig:5.3.3.3}
    \end{figure}
        
        Specifically, we can define $\delta_L = L_1 - L_2$. The initial configuration has perimeter $a_2 + a_1 + b + c + d_1 + d_2$, while the modified configuration has perimeter $a_2' + a_1' + b + c + d_1' + d_2'$. In the figure above, we see all endpoints and masses calculated. Note that similar reasoning to that used in Case 2 shows us that $d_1' - d_1 \leq a_1 - a_1'$, and also that $d_2' - d_2 \leq a_2 - a_2'$. Taken together, this gives us that the new modified configuration will never have more perimeter than the old configuration. Applying this rearrangement returns us to a framework that is identical to Case 0.
        
        \item Case 4: $S \leq M \leq L_1 \leq X_1 \leq L_2 \leq  X_2$. In this case we can transpose the $L_2$ and the $X_1$ regions, leaving the others in their current spots. The process is shown in the picture below.
    \begin{figure}[H]
    \centering
    \begin{subfigure}[h]{0.45\textwidth}
         \centering
         \begin{tikzpicture}
    \draw[stealth-,very thick] (-2.2,2.2) -- (0,0);
    \draw[-stealth,very thick](0,0) -- (2.2,2.2);
    \draw[stealth-stealth,very thick] (-2.5,0) -- (2.5,0);
    \filldraw (-1.7,0) circle (1pt) node[anchor=north] {\tiny $-a_2$};
    \filldraw (-.9,0) circle (1pt) node[anchor=north] {\tiny$-a_1$};
    \filldraw (-.3,0) circle (1pt) node[anchor=north] {\tiny$-b$};
    \filldraw (0,0) circle (1pt) node[anchor=north] {\tiny$0$};
    \filldraw (.5,0) circle (1pt) node[anchor=north] {\tiny$c$};
    \filldraw (1.2,0) circle (1pt) node[anchor=north] {\tiny$d_1$};
    \filldraw (2,0) circle (1pt) node[anchor=north] {\tiny$d_2$};
    \draw (-1.7,0) -- (-1.7,1.7);
    \draw (-.9,0) -- (-.9,.9);
    \draw (-.3,0) -- (-.3,.3);
    \draw (.5,0) -- (.5,.5);
    \draw (1.2,0) -- (1.2,1.2);
    \draw (2,0) -- (2,2);
    \filldraw [blue](-1.4,0) circle (.05pt) node[anchor=south] {\tiny $X_1$};
    \filldraw [blue](-.6,0) circle (.05pt) node[anchor=south] {\tiny$L_1$};
    \node at (-.15,.4) {\textcolor{blue}{\tiny$S$}};
    \node at (.25,.4) {\textcolor{blue}{\tiny$M$}};
    \filldraw [blue](.9,0) circle (.05pt) node[anchor=south] {\tiny$L_2$};
    \filldraw [blue](1.6,0) circle (.05pt) node[anchor=south] {\tiny$X_2$};
    \end{tikzpicture}
    
    \caption{Original configuration}
    \label{fig:5.3.4.1}
    \end{subfigure}
    \hfill
    \begin{subfigure}[h]{0.45\textwidth}
    \centering
    
    \begin{tikzpicture}
    \draw[stealth-,very thick] (-2.2,2.2) -- (0,0);
    \draw[-stealth,very thick](0,0) -- (2.2,2.2);
    \draw[stealth-stealth,very thick] (-2.5,0) -- (2.5,0);
    \filldraw (-1.6,0) circle (1pt) node[anchor=north] {\tiny $-a'_2$};
    \filldraw [gray](-1.4,0) circle (1pt);
    \filldraw (-.8,0) circle (1pt) node[anchor=north] {\tiny$-a'_1$};
    \filldraw (-.3,0) circle (1pt) node[anchor=north] {\tiny$-b$};
    \filldraw (0,0) circle (1pt) node[anchor=north] {\tiny$0$};
    \filldraw (.5,0) circle (1pt) node[anchor=north] {\tiny$c$};
    \filldraw [gray](1.45,0) circle (1pt);   
    \filldraw (1.3,0) circle (1pt) node[anchor=north] {\tiny$d'_1$};
    \filldraw [gray](2.25,0) circle (1pt);
    \filldraw (2.1,0) circle (1pt) node[anchor=north] {\tiny$d'_2$};
    \draw (-1.6,0) -- (-1.6,1.6);
    \draw (-.8,0) -- (-.8,.8);
    \draw (-.3,0) -- (-.3,.3);
    \draw (.5,0) -- (.5,.5);
    \draw (1.3,0) -- (1.3,1.3);
    \draw (2.1,0) -- (2.1,2.1);
    \filldraw [blue](-1.25,0) circle (.05pt) node[anchor=south] {\tiny $L_2$};
    \filldraw [blue](-.6,0) circle (.05pt) node[anchor=south] {\tiny$L_1$};
    \node at (-.15,.4) {\textcolor{blue}{\tiny$S$}};
    \node at (.25,.4) {\textcolor{blue}{\tiny$M$}};
    \filldraw [blue](.9,0) circle (.05pt) node[anchor=south] {\tiny$X_1$};
    \filldraw [blue](1.6,0) circle (.05pt) node[anchor=south] {\tiny$X_2$};
    \end{tikzpicture}
    \caption{New configuration}
    \label{fig:5.3.4.2}
    \end{subfigure}
    \caption{Case 4: transpose the locations of $X_1$ and $L_2$.}
    \label{fig:5.3.4.0}
\end{figure}
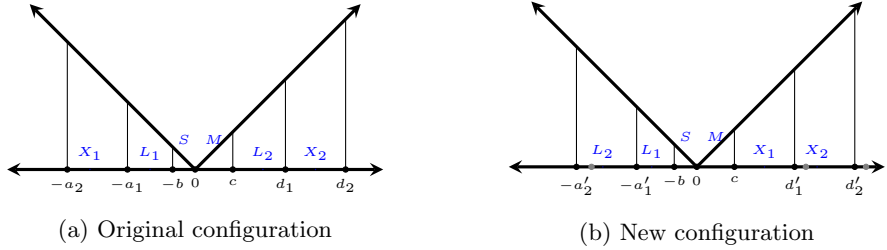
  
        
\begin{figure}[H]
    \centering
    \begin{tikzpicture}
    \draw[stealth-,very thick] (-3.5,3.5) -- (0,0);
    \draw[-stealth,very thick](0,0) -- (3.5,3.5);
    \draw[stealth-stealth,very thick] (-4.5,0) -- (4.5,0); 
    \draw (-2.8,0) -- (-2.8,2.8);
    \draw [dotted,thick] (-2.3,0) -- (-2.3,2.3);
    \draw (-1.5,0) -- (-1.5,1.5);
    \draw (-1,0) -- (-1,1);
    \draw (1,0) -- (1,1);
    \draw (1.6,0) -- (1.6,1.6);
    \draw [dotted,thick] (2,0) -- (2,2);
    \draw (2.6,0) -- (2.6,2.6);
    \draw [dotted,thick](3.2,0) -- (3.2,3.2);
    \filldraw (-2.8,0) circle (1.5pt) node[anchor=north] {\scriptsize $-a'_2$};
    \filldraw (-2.3,0) circle (1pt) node[anchor=north] {\scriptsize $-a_2$};
    \filldraw (-1.5,0) circle (1.5pt) node[anchor=north] {\scriptsize$-a_1$};
    \filldraw (-1,0) circle (1.5pt) node[anchor=north] {\scriptsize $-b$};
    \filldraw (0,0) circle (1pt) node[anchor=north] {\scriptsize $0$};
    \filldraw (1,0) circle (1.5pt) node[anchor=north] {\scriptsize$c$};
    \filldraw (1.6,0) circle (1pt) node[anchor=north] {\scriptsize$d'_1$};
    \filldraw (2,0) circle (1.5pt) node[anchor=north] {\scriptsize$d_1$};
    \filldraw (2.6,0) circle (1pt) node[anchor=north] {\scriptsize$d'_2$};
    \filldraw (3.2,0) circle (1.5pt) node[anchor=north] {\scriptsize$d_2$};
    \filldraw (-2.55,0) circle (.05pt) node[anchor=south] {\scriptsize$\updelta$};
    \filldraw (-1.9,0) circle (.05pt) node[anchor=south] {\tiny$X_1$};
    \filldraw (-1.2,0) circle (.05pt) node[anchor=south] {\tiny$L_1$};
    \filldraw (-.6,0) circle (.05pt) node[anchor=south] {\tiny$S$};
    \filldraw (.5,0) circle (.05pt) node[anchor=south] {\tiny$M$};
    \filldraw (1.3,0) circle (.05pt) node[anchor=south] {\tiny$X_1$};
    \filldraw (1.8,0) circle (.05pt) node[anchor=south] {\scriptsize$\updelta$};
    \node[rotate=90] at (2.3,1){\scriptsize$X_2-\updelta$};
    \node[rotate=0] at (2.85,1) {\scriptsize$\updelta$};
    \end{tikzpicture}
    \caption{Case 4: all endpoints and masses identified.}
    \label{fig:5.3.4.3}
\end{figure}
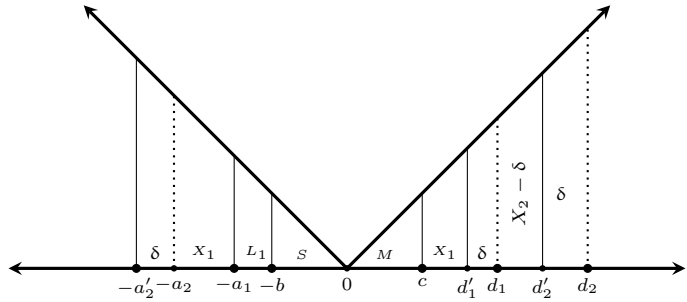

        Specifically, we can define $\delta = L_2 - X_1$. Note that $X_1 + L_1 + S \geq X_1 + M$, and therefore the interval $[-a_2', -a_2]$ cannot be wider than the interval $[d_1', d_1]$. This tells us that $a_2' - a_2 \leq d_1 - d_1'$, which implies that $a_2' + d_1' \leq a_2 + d_1$. Similarly, we can immediately see that $d_2' \leq d_2$. Taken together, this means that the perimeter of the new configuration is necessarily not greater than the perimeter of the original configuration. Applying this rearrangement returns us to a framework that is identical to Case 0.
        
        \item Case 5: $S \leq M \leq L_2 \leq X_2 \leq L_1 \leq X_1$. In this case, we can transpose the positions of $L_1$ and $L_2$, and simultaneously transpose the positions of $X_1$ and $X_2$. The result is depicted in the picture below.
    \begin{figure}[H]
    \centering
    \begin{subfigure}[h]{0.45\textwidth}
         \centering
         \begin{tikzpicture}
    \draw[stealth-,very thick] (-2.2,2.2) -- (0,0);
    \draw[-stealth,very thick](0,0) -- (2.2,2.2);
    \draw[stealth-stealth,very thick] (-2.5,0) -- (2.5,0);
    \filldraw (-1.7,0) circle (1pt) node[anchor=north] {\tiny $-a_2$};
    \filldraw (-.9,0) circle (1pt) node[anchor=north] {\tiny$-a_1$};
    \filldraw (-.3,0) circle (1pt) node[anchor=north] {\tiny$-b$};
    \filldraw (0,0) circle (1pt) node[anchor=north] {\tiny$0$};
    \filldraw (.5,0) circle (1pt) node[anchor=north] {\tiny$c$};
    \filldraw (1.2,0) circle (1pt) node[anchor=north] {\tiny$d_1$};
    \filldraw (2,0) circle (1pt) node[anchor=north] {\tiny$d_2$};
    \draw (-1.7,0) -- (-1.7,1.7);
    \draw (-.9,0) -- (-.9,.9);
    \draw (-.3,0) -- (-.3,.3);
    \draw (.5,0) -- (.5,.5);
    \draw (1.2,0) -- (1.2,1.2);
    \draw (2,0) -- (2,2);
    \filldraw [blue](-1.4,0) circle (.05pt) node[anchor=south] {\tiny $X_1$};
    \filldraw [blue](-.6,0) circle (.05pt) node[anchor=south] {\tiny$L_1$};
    \node at (-.15,.4) {\textcolor{blue}{\tiny$S$}};
    \node at (.25,.4) {\textcolor{blue}{\tiny$M$}};
    \filldraw [blue](.9,0) circle (.05pt) node[anchor=south] {\tiny$L_2$};
    \filldraw [blue](1.6,0) circle (.05pt) node[anchor=south] {\tiny$X_2$};
    \end{tikzpicture}
    
    \caption{Initial configuration}
    \label{fig:5.3.5.1}
    \end{subfigure}
    \hfill
    \begin{subfigure}[h]{0.45\textwidth}
    \centering
    
    \begin{tikzpicture}
    \draw[stealth-,very thick] (-2.2,2.2) -- (0,0);
    \draw[-stealth,very thick](0,0) -- (2.2,2.2);
    \draw[stealth-stealth,very thick] (-2.5,0) -- (2.5,0);
    \filldraw [gray](-1.6,0) circle (1pt) node[anchor=north] {\tiny $-a'_2$};
    \filldraw (-1.4,0) circle (1pt);
    \filldraw [gray](-1,0) circle (1pt);
    \filldraw (-.8,0) circle (1pt) node[anchor=north] {\tiny$-a'_1$};
    \filldraw (-.3,0) circle (1pt) node[anchor=north] {\tiny$-b$};
    \filldraw (0,0) circle (1pt) node[anchor=north] {\tiny$0$};
    \filldraw (.5,0) circle (1pt) node[anchor=north] {\tiny$c$};
    \filldraw [gray](1.1,0) circle (1pt);   
    \filldraw (1.3,0) circle (1pt) node[anchor=north] {\tiny$d'_1$};
    \filldraw [gray](1.9,0) circle (1pt);
    \filldraw (2.1,0) circle (1pt) node[anchor=north] {\tiny$d'_2$};
    \draw (-1.4,0) -- (-1.4,1.4);
    \draw (-.8,0) -- (-.8,.8);
    \draw (-.3,0) -- (-.3,.3);
    \draw (.5,0) -- (.5,.5);
    \draw (1.3,0) -- (1.3,1.3);
    \draw (2.1,0) -- (2.1,2.1);
    \filldraw [blue](-1.1,0) circle (.05pt) node[anchor=south] {\tiny $X_2$};
    \filldraw [blue](-.6,0) circle (.05pt) node[anchor=south] {\tiny$L_2$};
    \node at (-.15,.4) {\textcolor{blue}{\tiny$S$}};
    \node at (.25,.4) {\textcolor{blue}{\tiny$M$}};
    \filldraw [blue](.9,0) circle (.05pt) node[anchor=south] {\tiny$L_1$};
    \filldraw [blue](1.6,0) circle (.05pt) node[anchor=south] {\tiny$X_1$};
    \end{tikzpicture}
    \caption{New configuration}
    \label{fig:5.3.5.2}
    \end{subfigure}
    \caption{Case 5: transpose the locations of the $L_i$ and the $X_i$ simultaneously.}
    \label{fig:5.3.5.0}
\end{figure}
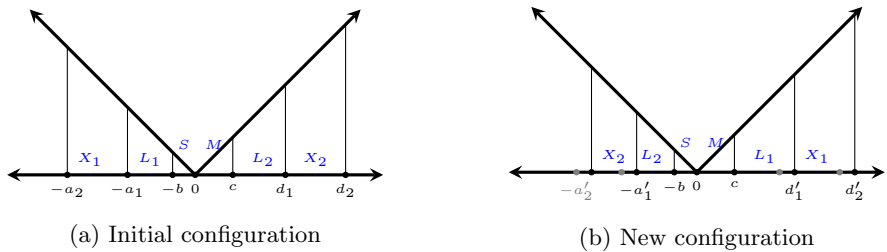

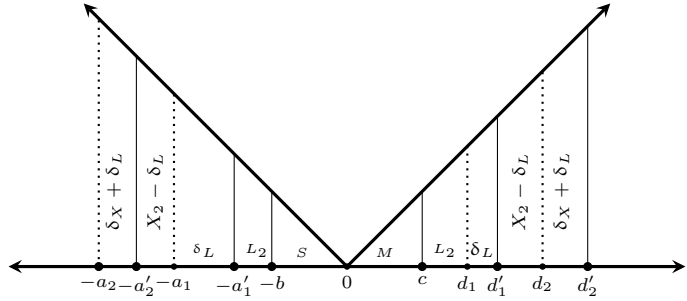
\begin{figure}[H]
    \centering
    \begin{tikzpicture}
    \draw[stealth-,very thick] (-3.5,3.5) -- (0,0);
    \draw[-stealth,very thick](0,0) -- (3.5,3.5);
    \draw[stealth-stealth,very thick] (-4.5,0) -- (4.5,0); 
    
    \draw [dotted,thick](-3.3,0) -- (-3.3, 3.3);
    \draw (-2.8,0) -- (-2.8,2.8);
    \draw [dotted,thick] (-2.3,0) -- (-2.3,2.3);
    \draw (-1.5,0) -- (-1.5,1.5);
    \draw (-1,0) -- (-1,1);
    \draw (1,0) -- (1,1);
    \draw [dotted,thick](1.6,0) -- (1.6,1.6);
    \draw  (2,0) -- (2,2);
    \draw [dotted,thick](2.6,0) -- (2.6,2.6);
    \draw (3.2,0) -- (3.2,3.2);
    
    \filldraw (-3.3,0) circle (1.5pt) node[anchor=north] {\scriptsize $-a_2$};
    \filldraw (-2.8,0) circle (1.5pt) node[anchor=north] {\scriptsize $-a'_2$};
    \filldraw (-2.3,0) circle (1pt) node[anchor=north] {\scriptsize $-a_1$};
    \filldraw (-1.5,0) circle (1.5pt) node[anchor=north] {\scriptsize$-a'_1$};
    \filldraw (-1,0) circle (1.5pt) node[anchor=north] {\scriptsize $-b$};
    \filldraw (0,0) circle (1pt) node[anchor=north] {\scriptsize $0$};
    \filldraw (1,0) circle (1.5pt) node[anchor=north] {\scriptsize$c$};
    \filldraw (1.6,0) circle (1pt) node[anchor=north] {\scriptsize$d_1$};
    \filldraw (2,0) circle (1.5pt) node[anchor=north] {\scriptsize$d'_1$};
    \filldraw (2.6,0) circle (1pt) node[anchor=north] {\scriptsize$d_2$};
    \filldraw (3.2,0) circle (1.5pt) node[anchor=north] {\scriptsize$d'_2$};
    \node[rotate=90] at (-3.1,1){\scriptsize$\updelta_X + \updelta_L$};
    \node[rotate=90] at (-2.55,1){\scriptsize$X_2-\updelta_L$};
    \filldraw (-1.9,0) circle (.05pt) node[anchor=south] {\tiny$\updelta_L$};
    \filldraw (-1.2,0) circle (.05pt) node[anchor=south] {\tiny$L_2$};
    \filldraw (-.6,0) circle (.05pt) node[anchor=south] {\tiny$S$};
    \filldraw (.5,0) circle (.05pt) node[anchor=south] {\tiny$M$};
    \filldraw (1.3,0) circle (.05pt) node[anchor=south] {\tiny$L_2$};
    \filldraw (1.8,0) circle (.05pt) node[anchor=south] {\scriptsize$\updelta_L$};
    \node[rotate=90] at (2.3,1){\scriptsize$X_2-\updelta_L$};
    \node[rotate=90] at (2.85,1) {\scriptsize$\updelta_X + \updelta_L$};
    \end{tikzpicture}
    \caption{Case 5: all endpoints and masses identified.}
    \label{fig:5.3.5.3}
\end{figure}        
        By setting $\updelta_X = X_1 - X_2$ and $\updelta_L = L_1 - L_2$ and reasoning as in previous cases, we can observe that this move will lower overall perimeter. The rearrangment will leave us in a framework that is identical to Case 4.
    \end{itemize}

Next, we must consider what happens if the second-smallest mass is on the same side of the origin as the smallest mass. We claim that this is not optimal, and that we can rearrange our masses to move the second-smallest mass across the perimeter, lowering perimeter in the process. To begin, let's rename our regions as follows:

\begin{figure}[H]
    \centering
    \begin{tikzpicture}
    \draw[stealth-,very thick] (-3.5,3.5) -- (0,0);
    \draw[-stealth,very thick](0,0) -- (3.5,3.5);
    \draw[stealth-stealth,very thick] (-4.5,0) -- (4.5,0); 
    
    \draw (-2.8,0) -- (-2.8,2.8);
    \draw (-1.5,0) -- (-1.5,1.5);
    \draw (-1,0) -- (-1,1);
    \draw (1,0) -- (1,1);
    \draw  (2,0) -- (2,2);
    \draw (3.2,0) -- (3.2,3.2);
    
    \filldraw (-2.8,0) circle (1.5pt) node[anchor=north] {\scriptsize $-a$};
    \filldraw (-1.5,0) circle (1.5pt) node[anchor=north] {\scriptsize$-b$};
    \filldraw (-1,0) circle (1.5pt) node[anchor=north] {\scriptsize $-c$};
    \filldraw (0,0) circle (1pt) node[anchor=north] {\scriptsize $0$};
    \filldraw (1,0) circle (1.5pt) node[anchor=north] {\scriptsize$d$};
    \filldraw (2,0) circle (1.5pt) node[anchor=north] {\scriptsize$e$};
    \filldraw (3.2,0) circle (1.5pt) node[anchor=north] {\scriptsize$f$};
    
    \filldraw (-2.1,0) circle (.05pt) node[anchor=south] {\tiny$L$};
    \filldraw (-1.2,0) circle (.05pt) node[anchor=south] {\tiny$M$};
    \filldraw (-.6,0) circle (.05pt) node[anchor=south] {\tiny$S$};
    \filldraw (.5,0) circle (.05pt) node[anchor=south] {\tiny$R_1$};
    \filldraw (1.5,0) circle (.05pt) node[anchor=south] {\tiny$R_2$};
    \filldraw (2.4,0) circle (.05pt) node[anchor=south] {\tiny$R_3$};
    \end{tikzpicture}
    \caption{New cases, in which the second smallest mass ($M$) is on the same side of the origin as the smallest mass ($S$).}
    \label{fig:6}
\end{figure}
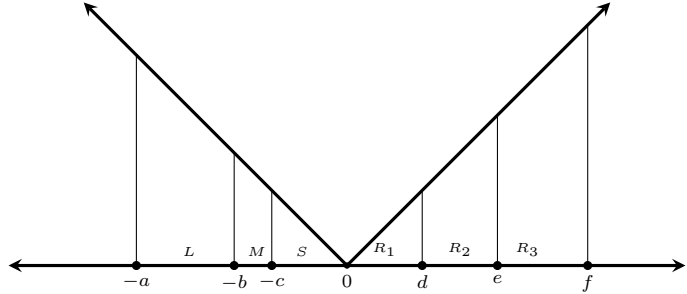    

We already know that $S \leq M \leq L$ and that $R_1 \leq R_2 \leq R_3$, and that $M < R_1$. The only question that remains is what the size of $L$ is relative to the $R_i$. There are four cases:

\begin{itemize}
    \item Case 6: $L \leq R_1 \leq R_2 \leq R_3$. In this case we can transpose $L$ and $R_1$, leaving the others in their current spots. The process is shown in the picture below;
    
    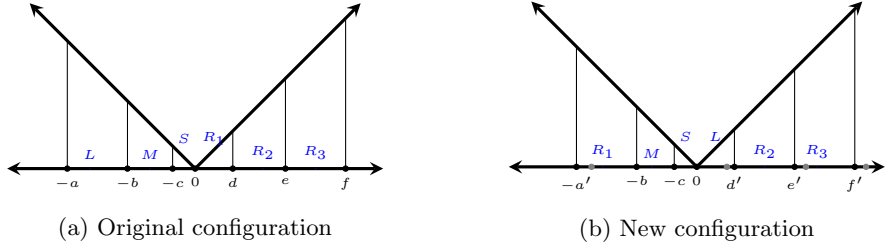
\begin{figure}[H]
    \centering
    \begin{subfigure}[h]{0.45\textwidth}
         \centering
         \begin{tikzpicture}
    \draw[stealth-,very thick] (-2.2,2.2) -- (0,0);
    \draw[-stealth,very thick](0,0) -- (2.2,2.2);
    \draw[stealth-stealth,very thick] (-2.5,0) -- (2.5,0);
    \filldraw (-1.7,0) circle (1pt) node[anchor=north] {\tiny $-a$};
    \filldraw (-.9,0) circle (1pt) node[anchor=north] {\tiny$-b$};
    \filldraw (-.3,0) circle (1pt) node[anchor=north] {\tiny$-c$};
    \filldraw (0,0) circle (1pt) node[anchor=north] {\tiny$0$};
    \filldraw (.5,0) circle (1pt) node[anchor=north] {\tiny$d$};
    \filldraw (1.2,0) circle (1pt) node[anchor=north] {\tiny$e$};
    \filldraw (2,0) circle (1pt) node[anchor=north] {\tiny$f$};
    \draw (-1.7,0) -- (-1.7,1.7);
    \draw (-.9,0) -- (-.9,.9);
    \draw (-.3,0) -- (-.3,.3);
    \draw (.5,0) -- (.5,.5);
    \draw (1.2,0) -- (1.2,1.2);
    \draw (2,0) -- (2,2);
    \filldraw [blue](-1.4,0) circle (.05pt) node[anchor=south] {\tiny $L$};
    \filldraw [blue](-.6,0) circle (.05pt) node[anchor=south] {\tiny$M$};
    \node at (-.15,.4) {\textcolor{blue}{\tiny$S$}};
    \node at (.25,.4) {\textcolor{blue}{\tiny$R_1$}};
    \filldraw [blue](.9,0) circle (.05pt) node[anchor=south] {\tiny$R_2$};
    \filldraw [blue](1.6,0) circle (.05pt) node[anchor=south] {\tiny$R_3$};
    \end{tikzpicture}
    
    \caption{Original configuration}
    \label{fig:5.3.6.1}
    \end{subfigure}
    \hfill
    \begin{subfigure}[h]{0.45\textwidth}
    \centering
    
    \begin{tikzpicture}
    \draw[stealth-,very thick] (-2.2,2.2) -- (0,0);
    \draw[-stealth,very thick](0,0) -- (2.2,2.2);
    \draw[stealth-stealth,very thick] (-2.5,0) -- (2.5,0);
    \filldraw (-1.6,0) circle (1pt) node[anchor=north] {\tiny $-a'$};
    \filldraw [gray](-1.4,0) circle (1pt);
    \filldraw (-.8,0) circle (1pt) node[anchor=north] {\tiny$-b$};
    \filldraw (-.3,0) circle (1pt) node[anchor=north] {\tiny$-c$};
    \filldraw (0,0) circle (1pt) node[anchor=north] {\tiny$0$};
    \filldraw [gray](.4,0) circle (1pt);
    \filldraw (.5,0) circle (1pt) node[anchor=north] {\tiny$d'$};
    \filldraw [gray](1.45,0) circle (1pt);   
    \filldraw (1.3,0) circle (1pt) node[anchor=north] {\tiny$e'$};
    \filldraw [gray](2.25,0) circle (1pt);
    \filldraw (2.1,0) circle (1pt) node[anchor=north] {\tiny$f'$};
    \draw (-1.6,0) -- (-1.6,1.6);
    \draw (-.8,0) -- (-.8,.8);
    \draw (-.3,0) -- (-.3,.3);
    \draw (.5,0) -- (.5,.5);
    \draw (1.3,0) -- (1.3,1.3);
    \draw (2.1,0) -- (2.1,2.1);
    \filldraw [blue](-1.25,0) circle (.05pt) node[anchor=south] {\tiny $R_1$};
    \filldraw [blue](-.6,0) circle (.05pt) node[anchor=south] {\tiny$M$};
    \node at (-.15,.4) {\textcolor{blue}{\tiny$S$}};
    \node at (.25,.4) {\textcolor{blue}{\tiny$L$}};
    \filldraw [blue](.9,0) circle (.05pt) node[anchor=south] {\tiny$R_2$};
    \filldraw [blue](1.6,0) circle (.05pt) node[anchor=south] {\tiny$R_3$};
    \end{tikzpicture}
    \caption{New configuration}
    \label{fig:5.3.6.2}
    \end{subfigure}
    \caption{Case 6: transpose the locations of $R_1$ and $L$.}
    \label{fig:5.3.6.0}
\end{figure}
      
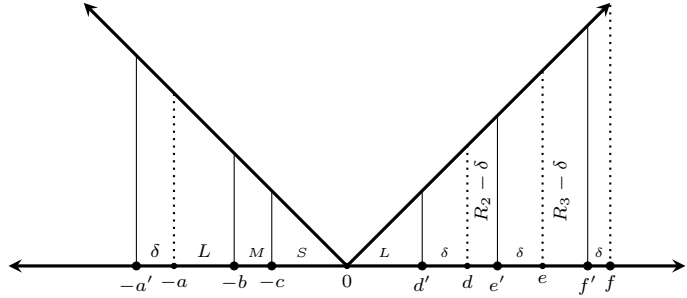
\begin{figure}[H]
    \centering
    \begin{tikzpicture}
    \draw[stealth-,very thick] (-3.5,3.5) -- (0,0);
    \draw[-stealth,very thick](0,0) -- (3.5,3.5);
    \draw[stealth-stealth,very thick] (-4.5,0) -- (4.5,0); 
    
    \draw (-2.8,0) -- (-2.8,2.8);
    \draw [dotted,thick] (-2.3,0) -- (-2.3,2.3);
    \draw (-1.5,0) -- (-1.5,1.5);
    \draw (-1,0) -- (-1,1);
    \draw (1,0) -- (1,1);
    \draw [dotted,thick](1.6,0) -- (1.6,1.6);
    \draw  (2,0) -- (2,2);
    \draw [dotted,thick](2.6,0) -- (2.6,2.6);
    \draw (3.2,0) -- (3.2,3.2);
    \draw [dotted,thick](3.5,0) -- (3.5,3.5);
    \filldraw (-2.8,0) circle (1.5pt) node[anchor=north] {\scriptsize $-a'$};
    \filldraw (-2.3,0) circle (1pt) node[anchor=north] {\scriptsize $-a$};
    \filldraw (-1.5,0) circle (1.5pt) node[anchor=north] {\scriptsize$-b$};
    \filldraw (-1,0) circle (1.5pt) node[anchor=north] {\scriptsize $-c$};
    \filldraw (0,0) circle (1pt) node[anchor=north] {\scriptsize $0$};
    \filldraw (1,0) circle (1.5pt) node[anchor=north] {\scriptsize$d'$};
    \filldraw (1.6,0) circle (1pt) node[anchor=north] {\scriptsize$d$};
    \filldraw (2,0) circle (1.5pt) node[anchor=north] {\scriptsize$e'$};
    \filldraw (2.6,0) circle (1pt) node[anchor=north] {\scriptsize$e$};
    \filldraw (3.2,0) circle (1.5pt) node[anchor=north] {\scriptsize$f'$};
    \filldraw (3.5,0) circle (1.5pt) node[anchor=north] {\scriptsize$f$};
    
    \filldraw (-2.55,0) circle (.05pt) node[anchor=south] {\scriptsize$\delta$};
    \filldraw (-1.9,0) circle (.05pt) node[anchor=south] {\scriptsize$L$};
    \filldraw (-1.2,0) circle (.05pt) node[anchor=south] {\tiny$M$};
    \filldraw (-.6,0) circle (.05pt) node[anchor=south] {\tiny$S$};
    \filldraw (.5,0) circle (.05pt) node[anchor=south] {\tiny$L$};
    \filldraw (1.3,0) circle (.05pt) node[anchor=south] {\tiny$\delta$};
    \node[rotate=90] at (1.8,1){\scriptsize$R_2-\delta$};
    \filldraw (2.3,0) circle (.05pt) node[anchor=south] {\tiny$\delta$};
    \node[rotate=90] at (2.85,1) {\scriptsize$R_3 - \delta$};
    \filldraw (3.35,0) circle (.05pt) node[anchor=south] {\tiny$\delta$};
    \end{tikzpicture}
    \caption{Case 6: all endpoints and masses identified.}
    \label{fig:5.3.6.3}
\end{figure}    
    
    
    Specifically, we can let $\delta = R_1 - L$. The figures above show both the original configuration (with perimeter $a + b + c + d + e + f$) and the new configuration (with perimeter $a' + b + c + d' + e' + f'$). The final figure identifies all endpoints and corresponding masses. We can make the observation that, as endpoints, $a > d'$. This implies that the interval $[-a', -a]$ is narrower than the interval $[d', d]$ which contains the same mass. Therefore, we can conclude that $a' - a < d - d'$ and therefore that $a' + d' < a + d$. Additionally, we can see that $e' < e$ and $f' < f$. Taken together, these inequalities imply that after reconfiguration we have successfully lowered perimeter. 
    The rearrangment will leave us in a framework that is identical to Case 7.
    
    \item Case 7: $ R_1 \leq L \leq R_2 \leq R_3$. In this case we can transpose $M$ and $R_1$ and, simultaneously, transpose $L$ and $R_2$. The process is shown in the picture below.
    
        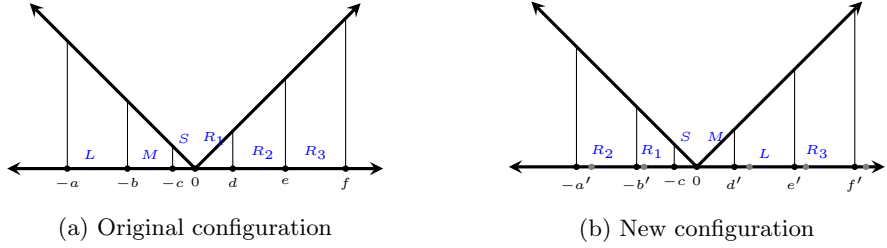
\begin{figure}[H]
    \centering
    \begin{subfigure}[h]{0.45\textwidth}
         \centering
         \begin{tikzpicture}
    \draw[stealth-,very thick] (-2.2,2.2) -- (0,0);
    \draw[-stealth,very thick](0,0) -- (2.2,2.2);
    \draw[stealth-stealth,very thick] (-2.5,0) -- (2.5,0);
    \filldraw (-1.7,0) circle (1pt) node[anchor=north] {\tiny $-a$};
    \filldraw (-.9,0) circle (1pt) node[anchor=north] {\tiny$-b$};
    \filldraw (-.3,0) circle (1pt) node[anchor=north] {\tiny$-c$};
    \filldraw (0,0) circle (1pt) node[anchor=north] {\tiny$0$};
    \filldraw (.5,0) circle (1pt) node[anchor=north] {\tiny$d$};
    \filldraw (1.2,0) circle (1pt) node[anchor=north] {\tiny$e$};
    \filldraw (2,0) circle (1pt) node[anchor=north] {\tiny$f$};
    \draw (-1.7,0) -- (-1.7,1.7);
    \draw (-.9,0) -- (-.9,.9);
    \draw (-.3,0) -- (-.3,.3);
    \draw (.5,0) -- (.5,.5);
    \draw (1.2,0) -- (1.2,1.2);
    \draw (2,0) -- (2,2);
    \filldraw [blue](-1.4,0) circle (.05pt) node[anchor=south] {\tiny $L$};
    \filldraw [blue](-.6,0) circle (.05pt) node[anchor=south] {\tiny$M$};
    \node at (-.15,.4) {\textcolor{blue}{\tiny$S$}};
    \node at (.25,.4) {\textcolor{blue}{\tiny$R_1$}};
    \filldraw [blue](.9,0) circle (.05pt) node[anchor=south] {\tiny$R_2$};
    \filldraw [blue](1.6,0) circle (.05pt) node[anchor=south] {\tiny$R_3$};
    \end{tikzpicture}
    
    \caption{Original configuration}
    \label{fig:5.3.7.1}
    \end{subfigure}
    \hfill
    \begin{subfigure}[h]{0.45\textwidth}
    \centering
    
    \begin{tikzpicture}
    \draw[stealth-,very thick] (-2.2,2.2) -- (0,0);
    \draw[-stealth,very thick](0,0) -- (2.2,2.2);
    \draw[stealth-stealth,very thick] (-2.5,0) -- (2.5,0);
    \filldraw (-1.6,0) circle (1pt) node[anchor=north] {\tiny $-a'$};
    \filldraw [gray](-1.4,0) circle (1pt);
    \filldraw (-.8,0) circle (1pt) node[anchor=north] {\tiny$-b'$};
    \filldraw [gray](-0.7,0) circle (1pt);
    \filldraw (-.3,0) circle (1pt) node[anchor=north] {\tiny$-c$};
    \filldraw (0,0) circle (1pt) node[anchor=north] {\tiny$0$};
    \filldraw [gray](.7,0) circle (1pt);
    \filldraw (.5,0) circle (1pt) node[anchor=north] {\tiny$d'$};
    \filldraw [gray](1.45,0) circle (1pt);   
    \filldraw (1.3,0) circle (1pt) node[anchor=north] {\tiny$e'$};
    \filldraw [gray](2.25,0) circle (1pt);
    \filldraw (2.1,0) circle (1pt) node[anchor=north] {\tiny$f'$};
    \draw (-1.6,0) -- (-1.6,1.6);
    \draw (-.8,0) -- (-.8,.8);
    \draw (-.3,0) -- (-.3,.3);
    \draw (.5,0) -- (.5,.5);
    \draw (1.3,0) -- (1.3,1.3);
    \draw (2.1,0) -- (2.1,2.1);
    \filldraw [blue](-1.25,0) circle (.05pt) node[anchor=south] {\tiny $R_2$};
    \filldraw [blue](-.6,0) circle (.05pt) node[anchor=south] {\tiny$R_1$};
    \node at (-.15,.4) {\textcolor{blue}{\tiny$S$}};
    \node at (.25,.4) {\textcolor{blue}{\tiny$M$}};
    \filldraw [blue](.9,0) circle (.05pt) node[anchor=south] {\tiny$L$};
    \filldraw [blue](1.6,0) circle (.05pt) node[anchor=south] {\tiny$R_3$};
    \end{tikzpicture}
    \caption{New configuration}
    \label{fig:5.3.7.2}
    \end{subfigure}
    \caption{Case 7: transpose $R_1$ with $M$, and also  $R_2$ with $L$.}
    \label{fig:5.3.7.0}
\end{figure}
    
\begin{figure}[H]
    \centering
    \begin{tikzpicture}
    \draw[stealth-,very thick] (-3.5,3.5) -- (0,0);
    \draw[-stealth,very thick](0,0) -- (3.5,3.5);
    \draw[stealth-stealth,very thick] (-4.5,0) -- (4.5,0); 
    
    \draw (-3,0) -- (-3,3);
    \draw [dotted,thick] (-2.5,0) -- (-2.5,2.5);
    \draw (-1.9,0) -- (-1.9,1.9);
    \draw [dotted,thick] (-1.5,0) -- (-1.5,1.5);
    \draw (-1,0) -- (-1,1);
    \draw (1,0) -- (1,1);
    \draw [dotted,thick](1.6,0) -- (1.6,1.6);
    \draw  (2,0) -- (2,2);
    \draw [dotted,thick](2.6,0) -- (2.6,2.6);
    \draw (3.2,0) -- (3.2,3.2);
    \draw [dotted,thick](3.5,0) -- (3.5,3.5);
    \filldraw (-3,0) circle (1.5pt) node[anchor=north] {\scriptsize $-a'$};
    \filldraw (-2.5,0) circle (1pt) node[anchor=north] {\scriptsize $-a$};
    \filldraw (-1.9,0) circle (1.5pt) node[anchor=north] {\scriptsize$-b'$};
    \filldraw (-1.5,0) circle (1.5pt) node[anchor=north] {\scriptsize$-b$};
    \filldraw (-1,0) circle (1.5pt) node[anchor=north] {\scriptsize $-c$};
    \filldraw (0,0) circle (1pt) node[anchor=north] {\scriptsize $0$};
    \filldraw (1,0) circle (1.5pt) node[anchor=north] {\scriptsize$d'$};
    \filldraw (1.6,0) circle (1pt) node[anchor=north] {\scriptsize$d$};
    \filldraw (2,0) circle (1.5pt) node[anchor=north] {\scriptsize$e'$};
    \filldraw (2.6,0) circle (1pt) node[anchor=north] {\scriptsize$e$};
    \filldraw (3.2,0) circle (1.5pt) node[anchor=north] {\scriptsize$f'$};
    \filldraw (3.5,0) circle (1.5pt) node[anchor=north] {\scriptsize$f$};
    
    \node[rotate=90] at (-2.7,1) {\scriptsize$\delta_1 + \delta_2$};
    \node[rotate=90] at (-2.2,1) {\scriptsize$L - \delta_1$};
    \filldraw (-1.7,0) circle (.05pt) node[anchor=south] {\scriptsize$\delta_1$};
    \filldraw (-1.2,0) circle (.05pt) node[anchor=south] {\tiny$M$};
    \filldraw (-.6,0) circle (.05pt) node[anchor=south] {\tiny$S$};
    \filldraw (.5,0) circle (.05pt) node[anchor=south] {\tiny$M$};
    \filldraw (1.3,0) circle (.05pt) node[anchor=south] {\scriptsize$\delta_1$};
    \node[rotate=90] at (1.8,1){\scriptsize$L-\delta_1$};
    \node[rotate=90] at (2.3,1){\scriptsize$\delta_1 + \delta_2$};
    \node[rotate=90] at (2.85,1) {\scriptsize$R_3 - (\delta_1 + \delta_2)$};
    \node[rotate=90] at (3.33,1){\scriptsize$\delta_1 + \delta_2$};
    \end{tikzpicture}
    \caption{Case 7: all endpoints and masses identified.}
    \label{fig:5.3.7.3}
\end{figure}
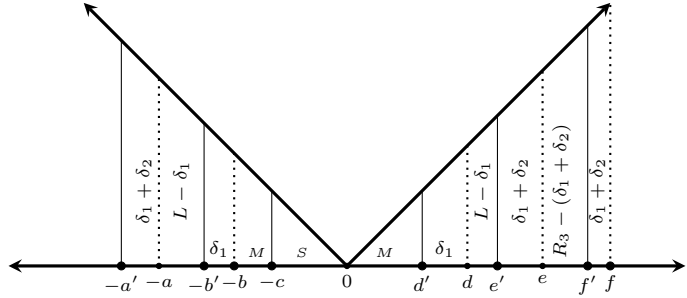

    Specifically, we can let $\delta_1 = R_1 - M$ and let $\delta_2 = R_2 - L$. In the figures above, we see the original configuration (with perimeter $a + b + c + d + e + f$) as well as the new configuration (with perimeter $a' + b' c + d' + e' + f'$).
    
    Since $b > 0$ and $[-b, -c]$ has the same mass as $[0,d]$, can conclude that $b > d'$. This implies that $b' - b < d - d'$, and therefore that $b' + d' < b + d$. Similarly, we can conclude that $a > e'$, leading to $a' - a < e - e'$ and $a' + e' < a + e$. Finally, we can conclude that $f' < f$ automatically. Taking these inequalities together, we can conclude that our total perimeter has decreased after our reconfiguration. Applying this rearrangement returns us to a framework that is identical to Case 0.
    
    \item Case 8: $ R_1 \leq R_2 \leq L  \leq R_3$. In this case we can transpose $M$ and $R_1$, leaving the others in their current spots. The process is shown in the picture below.
    
         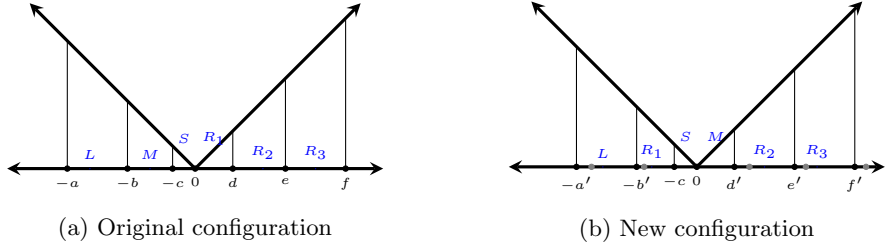
\begin{figure}[H]
    \centering
    \begin{subfigure}[h]{0.45\textwidth}
         \centering
         \begin{tikzpicture}
    \draw[stealth-,very thick] (-2.2,2.2) -- (0,0);
    \draw[-stealth,very thick](0,0) -- (2.2,2.2);
    \draw[stealth-stealth,very thick] (-2.5,0) -- (2.5,0);
    \filldraw (-1.7,0) circle (1pt) node[anchor=north] {\tiny $-a$};
    \filldraw (-.9,0) circle (1pt) node[anchor=north] {\tiny$-b$};
    \filldraw (-.3,0) circle (1pt) node[anchor=north] {\tiny$-c$};
    \filldraw (0,0) circle (1pt) node[anchor=north] {\tiny$0$};
    \filldraw (.5,0) circle (1pt) node[anchor=north] {\tiny$d$};
    \filldraw (1.2,0) circle (1pt) node[anchor=north] {\tiny$e$};
    \filldraw (2,0) circle (1pt) node[anchor=north] {\tiny$f$};
    \draw (-1.7,0) -- (-1.7,1.7);
    \draw (-.9,0) -- (-.9,.9);
    \draw (-.3,0) -- (-.3,.3);
    \draw (.5,0) -- (.5,.5);
    \draw (1.2,0) -- (1.2,1.2);
    \draw (2,0) -- (2,2);
    \filldraw [blue](-1.4,0) circle (.05pt) node[anchor=south] {\tiny $L$};
    \filldraw [blue](-.6,0) circle (.05pt) node[anchor=south] {\tiny$M$};
    \node at (-.15,.4) {\textcolor{blue}{\tiny$S$}};
    \node at (.25,.4) {\textcolor{blue}{\tiny$R_1$}};
    \filldraw [blue](.9,0) circle (.05pt) node[anchor=south] {\tiny$R_2$};
    \filldraw [blue](1.6,0) circle (.05pt) node[anchor=south] {\tiny$R_3$};
    \end{tikzpicture}
    
    \caption{Original configuration}
    \label{fig:5.3.8.1}
    \end{subfigure}
    \hfill
    \begin{subfigure}[h]{0.45\textwidth}
    \centering
    
    \begin{tikzpicture}
    \draw[stealth-,very thick] (-2.2,2.2) -- (0,0);
    \draw[-stealth,very thick](0,0) -- (2.2,2.2);
    \draw[stealth-stealth,very thick] (-2.5,0) -- (2.5,0);
    \filldraw (-1.6,0) circle (1pt) node[anchor=north] {\tiny $-a'$};
    \filldraw [gray](-1.4,0) circle (1pt);
    \filldraw (-.8,0) circle (1pt) node[anchor=north] {\tiny$-b'$};
    \filldraw [gray](-0.7,0) circle (1pt);
    \filldraw (-.3,0) circle (1pt) node[anchor=north] {\tiny$-c$};
    \filldraw (0,0) circle (1pt) node[anchor=north] {\tiny$0$};
    \filldraw [gray](.7,0) circle (1pt);
    \filldraw (.5,0) circle (1pt) node[anchor=north] {\tiny$d'$};
    \filldraw [gray](1.45,0) circle (1pt);   
    \filldraw (1.3,0) circle (1pt) node[anchor=north] {\tiny$e'$};
    \filldraw [gray](2.25,0) circle (1pt);
    \filldraw (2.1,0) circle (1pt) node[anchor=north] {\tiny$f'$};
    \draw (-1.6,0) -- (-1.6,1.6);
    \draw (-.8,0) -- (-.8,.8);
    \draw (-.3,0) -- (-.3,.3);
    \draw (.5,0) -- (.5,.5);
    \draw (1.3,0) -- (1.3,1.3);
    \draw (2.1,0) -- (2.1,2.1);
    \filldraw [blue](-1.25,0) circle (.05pt) node[anchor=south] {\tiny $L$};
    \filldraw [blue](-.6,0) circle (.05pt) node[anchor=south] {\tiny$R_1$};
    \node at (-.15,.4) {\textcolor{blue}{\tiny$S$}};
    \node at (.25,.4) {\textcolor{blue}{\tiny$M$}};
    \filldraw [blue](.9,0) circle (.05pt) node[anchor=south] {\tiny$R_2$};
    \filldraw [blue](1.6,0) circle (.05pt) node[anchor=south] {\tiny$R_3$};
    \end{tikzpicture}
    \caption{New configuration}
    \label{fig:5.3.8.2}
    \end{subfigure}
    \caption{Case 8: transpose $R_1$ with $M$.}
    \label{fig:5.3.8.0}
\end{figure}
    
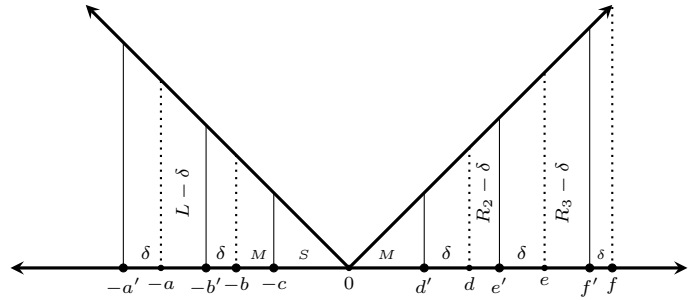
\begin{figure}[H]
    \centering
    \begin{tikzpicture}
    \draw[stealth-,very thick] (-3.5,3.5) -- (0,0);
    \draw[-stealth,very thick](0,0) -- (3.5,3.5);
    \draw[stealth-stealth,very thick] (-4.5,0) -- (4.5,0); 
    
    \draw (-3,0) -- (-3,3);
    \draw [dotted,thick] (-2.5,0) -- (-2.5,2.5);
    \draw (-1.9,0) -- (-1.9,1.9);
    \draw [dotted,thick] (-1.5,0) -- (-1.5,1.5);
    \draw (-1,0) -- (-1,1);
    \draw (1,0) -- (1,1);
    \draw [dotted,thick](1.6,0) -- (1.6,1.6);
    \draw  (2,0) -- (2,2);
    \draw [dotted,thick](2.6,0) -- (2.6,2.6);
    \draw (3.2,0) -- (3.2,3.2);
    \draw [dotted,thick](3.5,0) -- (3.5,3.5);
    \filldraw (-3,0) circle (1.5pt) node[anchor=north] {\scriptsize $-a'$};
    \filldraw (-2.5,0) circle (1pt) node[anchor=north] {\scriptsize $-a$};
    \filldraw (-1.9,0) circle (1.5pt) node[anchor=north] {\scriptsize$-b'$};
    \filldraw (-1.5,0) circle (1.5pt) node[anchor=north] {\scriptsize$-b$};
    \filldraw (-1,0) circle (1.5pt) node[anchor=north] {\scriptsize $-c$};
    \filldraw (0,0) circle (1pt) node[anchor=north] {\scriptsize $0$};
    \filldraw (1,0) circle (1.5pt) node[anchor=north] {\scriptsize$d'$};
    \filldraw (1.6,0) circle (1pt) node[anchor=north] {\scriptsize$d$};
    \filldraw (2,0) circle (1.5pt) node[anchor=north] {\scriptsize$e'$};
    \filldraw (2.6,0) circle (1pt) node[anchor=north] {\scriptsize$e$};
    \filldraw (3.2,0) circle (1.5pt) node[anchor=north] {\scriptsize$f'$};
    \filldraw (3.5,0) circle (1.5pt) node[anchor=north] {\scriptsize$f$};
    
    \filldraw (-2.7,0) circle (.05pt) node[anchor=south] {\scriptsize$\delta$};
    \node[rotate=90] at (-2.2,1) {\scriptsize$L - \delta$};
    \filldraw (-1.7,0) circle (.05pt) node[anchor=south] {\scriptsize$\delta$};
    \filldraw (-1.2,0) circle (.05pt) node[anchor=south] {\tiny$M$};
    \filldraw (-.6,0) circle (.05pt) node[anchor=south] {\tiny$S$};
    \filldraw (.5,0) circle (.05pt) node[anchor=south] {\tiny$M$};
    \filldraw (1.3,0) circle (.05pt) node[anchor=south] {\scriptsize$\delta$};
    \node[rotate=90] at (1.8,1){\scriptsize$R_2-\delta$};
    \filldraw (2.3,0) circle (.05pt) node[anchor=south] {\scriptsize$\delta$};
    \node[rotate=90] at (2.85,1) {\scriptsize$R_3 - \delta$};
    \filldraw (3.35,0) circle (.05pt) node[anchor=south] {\tiny$\delta$};
    \end{tikzpicture}
    \caption{Case 8: all endpoints and masses identified.}
    \label{fig:5.3.8.3}
\end{figure}       
    
    Specifically, we can let $\delta = R_1 - M$. The figures above show both the original configuration (with perimeter $a + b + c + d + e + f$) as well as the new configuration (with perimeter $a' + b' + c + d' + e' + f'$). Since $S \geq 0$ and the intervals $[-b, -c]$ and $[0, d']$ have the same mass, we know that $b \geq d'$. This allows us to conclude that the interval $[-b', -b]$ is narrower than (or the same size as) the interval  $[d', d]$, as both contain the same mass. We conclude that $b' - b \leq d - d'$, and therefore that $b' + d' \leq b + d$. Using similar reasoning (and the fact that $L > R_2$), we can deduce that $a' + e' < a + e$. Finally, we can identify that $f' < f$. Taken together, we can conclude that the total perimeter is smaller after moving to the  new configuration. Applying this rearrangement returns us to a framework that is identical to Case 0.

    \item Case 9: $R_1 \leq R_2 \leq R_3 \leq L$. In this case we can transpose $L$ and $R_1$, leaving the others in their current spots. The associated picture and proof are identical to that of Case 8: however, after considering the new ordering of intervals, we can conclude that have reconfigured into Case 1 rather than Case 0.
    
    
    
\end{itemize}

\end{proof}

\bibliographystyle{plain}
\bibliography{2021SCOPEBubble}

\begin{thebibliography}{1}

\bibitem{ChambersBongiovanniETAL18}
E.~Bongiovanni, L.~Di Giosia, A.~Diaz, J.~Habib, A.~Kakkar, L.~Kenigsberg,
  D.~Pittman, N.~Sothanaphan, and W.~Zhu.
\newblock Double bubbles on the real line with log-convex density.
\newblock {\em Analysis and Geometry in Metric Spaces}, 6(1):64--88, 2018.

\bibitem{BoyerBrownChambersLovingTammen16}
Wyatt Boyer, Bryan Brown, Gregory Chambers, Alyssa Loving, and Sarah Tammen.
\newblock Isoperimetric regions in rn with density rp.
\newblock {\em Analysis and Geometry in Metric Spaces}, 4, 01 2016.

\bibitem{HuangMorgan19}
J.~Huang, X.~Qian, Y.~Pan, M.~Xu, L.~Yang, and J.~Zhou.
\newblock Isoperimetric problems on the line with density $|x|^p$.
\newblock {\em Rose-Hulman Undergraduate Mathematics Journal}, 20(2), 2019.

\bibitem{MorganPratelli13}
F.~Morgan and A.~Pratelli.
\newblock Existence of isoperimetric regions in rn with density.
\newblock {\em Ann. Glob. Anal. Geom.}, 43:331--365, 2013.

\bibitem{BayleCaneteMorganRosales06}
Cesar Rosales, Antonio Cañete, Vincent Bayle, and Frank Morgan.
\newblock On the isoperimetric problem in euclidean space with density.
\newblock {\em Calculus of Variations and Partial Differential Equations}, 31,
  03 2006.

\bibitem{Ross21}
J.~Ross.
\newblock Solution to the isoperimetric n-bubble problem on $\mathbb{R}^1$ with
  log-concave density.
\newblock {\em Preprint}, 2021.

\bibitem{Sothanaphan20}
Nat Sothanaphan.
\newblock 1d triple bubble problem with log-convex density.
\newblock 11 2020.

\end{thebibliography}

\end{document}